\numberwithin{equation}{section}
\newtheorem{prop}{Proposition}[section]
\newtheorem{theo}[prop]{Theorem}
\newtheorem{lemm}[prop]{Lemma}
\newtheorem{coro}[prop]{Corollary}
\newtheorem{rema}[prop]{Remark}
\newtheorem{defi}[prop]{Definition}
\def\and{\quad{\rm and}\quad}
\def\<{\langle}
\def\>{\rangle}
\newenvironment{manualtheorem}[1]{%
	\manualtheoreminner
}{\endmanualtheoreminner}
\title{On the $\sigma_2$-Nirenberg problem on $\mathbb{S}^2$}
\begin{document}
	\author[YanYan Li]{YanYan Li}
	\address{Department of Mathematics, Rutgers University, Hill Center, Busch Campus, 110 Frelinghuysen Road, Piscataway, NJ 08854, USA.}
	\email{yyli@math.rutgers.edu}
	
	\author[Han Lu]{Han Lu}
	\address{Department of Mathematics, Rutgers University, Hill Center, Busch Campus, 110 Frelinghuysen Road, Piscataway, NJ 08854, USA.}
	\email{hl659@math.rutgers.edu}
	
	\author[Siyuan Lu]{Siyuan Lu}
	\address{Department of Mathematics and Statistics, McMaster University, 1280 Main Street West, Hamilton, ON, L8S 4K1, Canada.}
	\email{siyuan.lu@mcmaster.ca}
	
	\maketitle

	\begin{abstract}
We establish theorems on the existence and 
compactness of solutions to the $\sigma_2$-Nirenberg problem on the standard sphere
$\mathbb S^2$. A first significant ingredient, a Liouville type theorem for the associated fully nonlinear M\"obius invariant elliptic equations,
  was established in an earlier paper of ours. Our proof of the existence and compactness results requires a number of additional crucial  ingredients which we prove in this paper: A Liouville type theorem for the associated fully nonlinear M\"obius invariant degenerate elliptic equations, a priori estimates of first and second order derivatives of  solutions to the $\sigma_2$-Nirenberg problem, and a B\^ocher type theorem for the associated fully nonlinear M\"obius invariant elliptic equations. Given these results, we are able to complete a fine analysis of a sequence of blow-up solutions to the $\sigma_2$-Nirenberg problem. In particular, we prove that there can be at most one blow-up point for such a blow-up sequence of solutions. This, together with a Kazdan-Warner type identity, allows us to prove $L^\infty$  a priori estimates for  solutions of the $\sigma_2$-Nirenberg problem under some simple generic hypothesis. The higher derivative estimates then follow from classical estimates of Nirenberg and Schauder. In turn, the  existence of solutions to the $\sigma_2$-Nirenberg problem is obtained by an application of the by now standard degree theory for second order fully nonlinear elliptic operators.

	\end{abstract}
	
	\section{Introduction}
	The Nirenberg problem, raised by Nirenberg in the years 1969-1970,
 asks to identify functions $K$ on the two-sphere $\mathbb S^2$ for which there exists a metric $\tilde g$ on $\mathbb S^2$ conformal to the standard metric $g$ such that $K$ is the Gaussian curvature of $\tilde{g}$. Naturally, this problem extends to higher dimensions with the Gaussian curvature replaced by the scalar curvature.

	There has been vast literature on the Nirenberg problem and related ones and
it would be impossible to mention here all works in this area. One significant aspect most directly related to this paper is the fine analysis of blow-up (approximate) solutions or the compactness of the solution set. These were studied in \cite{BC, CGY0, CY, CY2, CLin, Han, L95, L96}, and related references. For more recent and further studies, see \cite{JLX, MM},  and related references. For $n\geq 3$ and $k\geq 2$, the $\sigma_k$-Nirenberg problem was studied in \cite{CHY, LNW1, LNW2}.

	In this paper, we are interested in the existence and compactness of solutions of a nonlinear version of the Nirenberg problem on the standard sphere $(\mathbb{S}^2,g)$. This equation has similar structures to the $\sigma_k$-Yamabe and $\sigma_k$-Nirenberg problems in higher dimensions.
	
	Throughout this paper, we use $(\mathbb{S}^2,g)$ to denote the standard two sphere. On $(\mathbb{S}^2,g)$, for a conformal metric $g_u=e^ug$, let
	\begin{align}\label{Agu}
	A_{g_u}:=-\nabla_{g}^2u+\frac{1}{2}du\otimes du-\frac{1}{4}|\nabla_{g} u|^2g+K_{g} g,
	\end{align}
	where $K_{g}\equiv 1 $ is the Gaussian curvature of the metric $g$.
	
	For $\lambda=(\lambda_1,\lambda_2)\in\mathbb{R}^2$, let $\sigma_1(\lambda):=\lambda_1+\lambda_2$ and $\sigma_2(\lambda):=\lambda_1\lambda_2$ be the elementary symmetric functions. We use $\lambda(g_u^{-1}A_{g_u})$ to denote the eigenvalues of $g_u^{-1}A_{g_u}$, and $\sigma_k(g_u^{-1}A_{g_u})$ to denote $\sigma_k(\lambda(g_u^{-1}A_{g_u}))$ for $k=1,2$. Note that $\sigma_1(g_u^{-1}A_{g_u})=2K_{g_u}$.

	We study the equation
	\begin{align}\label{eqn}
	\sigma_2(g_u^{-1}A_{g_u})=K(x),\quad \lambda(g_u^{-1}A_{g_u})\in\Gamma_2\quad\text{on }\mathbb{S}^2,
	\end{align}
	where
	\begin{align*}
	\Gamma_2:=\{(\lambda_1,\lambda_2):\lambda_1>0, \lambda_2>0\}
	\end{align*}
	is the first quadrant.

	For a positive function $K$ in $C^2(\mathbb{S}^2)$ satisfying the nondegeneracy condition
	\begin{align}\label{Knondege}
	|\nabla K|_{g}+|\Delta K|_{g}>0 \text{ on }\mathbb{S}^2,
	\end{align}
	we define the sets
	\begin{align*}
	\text{Crit}_{+}(K)=\{x\in\mathbb{S}^2|\nabla_{g}K(x)=0,\Delta_{g}K(x)>0\},\\
	\text{Crit}_{-}(K)=\{x\in\mathbb{S}^2|\nabla_{g}K(x)=0,\Delta_{g}K(x)<0\}.
	\end{align*}
	Set $\deg(\nabla K, \text{Crit}_{-}(K)):=\deg(\nabla K,O,0)$, the Brouwer degree, where $O$ is any open subset of $\mathbb{S}^2$ containing $\text{Crit}_{-}(K)$ and disjoint from $\text{Crit}_{+}(K)$. By (\ref{Knondege}), this is well-defined.
	
	For any $K$ satisfying (\ref{Knondege}) and having only isolated nondegenerate critical points, 
	\begin{align*}
	\deg(\nabla K, \text{Crit}_{-}(K))=\sum_{\bar{x}\in\mathbb{S}^2,\nabla K(\bar{x})=0,\Delta K(\bar{x})<0}(-1)^{i(\bar x)}
	\end{align*}
	where $i(\bar x)$ denotes the number of negative eigenvalues of $\nabla^2 K(\bar x)$.
	
	For an introduction
	to degree theories, see e.g. \cite[Chapter 1]{N1}.

	The first main theorem in this paper is the following existence and compactness result for equation (\ref{eqn}).
	
	\begin{theo}\label{Niren}
	Let $(\mathbb{S}^2,g)$ be the standard two sphere, and let $K$ be a positive function in $C^2(\mathbb{S}^2)$ satisfying the nondegeneracy condition (\ref{Knondege}). Then there exists a positive constant $C$ depending only on $K$, such that
		\begin{align}\label{C2norm}
		\|u\|_{C^2(\mathbb{S}^2)}\leq C, \text{ for all $C^2$ solutions $u$ of equation (\ref{eqn}).}
		\end{align}
		Moreover, if $\deg(\nabla K, \text{Crit}_{-}(K))\neq 1$, then (\ref{eqn}) admits a solution.
	\end{theo}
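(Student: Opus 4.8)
The plan is to follow the blow-up analysis strategy that has become standard for the classical Nirenberg problem, but carried out with the fully nonlinear ingredients advertised in the abstract. The $C^2$ a priori estimate is the heart of the matter; the existence statement will then follow formally from a degree-counting argument.

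\textbf{Step 1: Local gradient and Hessian estimates.}
First I would invoke the a priori estimates of first and second order derivatives of solutions to \eqref{eqn} (one of the ingredients proved in this paper). These reduce the desired global $C^2$ bound \eqref{C2norm} to a global $L^\infty$ bound on $u$, since once $\|u\|_{L^\infty}$ is controlled, the interior derivative estimates applied on all of $\mathbb{S}^2$ upgrade this to a $C^2$ bound (and, via the classical Nirenberg and Schauder theory mentioned in the abstract, to bounds in higher norms as needed for the degree argument).

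\textbf{Step 2: Blow-up analysis and at most one blow-up point.}
Suppose, for contradiction, that there is a sequence of solutions $u_i$ of \eqref{eqn} with $\max_{\mathbb S^2} u_i \to +\infty$ (one should also rule out $u_i \to -\infty$, which is easier, e.g. by integrating the equation and using $\lambda \in \Gamma_2$, so that $\sigma_1(g_{u_i}^{-1}A_{g_{u_i}})>0$ forces a lower bound through the Gauss--Bonnet / Kazdan--Warner type constraints). Around the blow-up points, rescale by M\"obius transformations; the Liouville type theorem for the M\"obius invariant \emph{elliptic} equation (from the earlier paper) identifies the bubble profiles as standard ones, and the Liouville type theorem for the M\"obius invariant \emph{degenerate elliptic} equation, together with the B\^ocher type theorem, controls the behaviour of the limiting function away from the blow-up set, showing in particular that the limit of $u_i$ (suitably normalized) near an isolated blow-up point behaves like a fundamental solution. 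The key new conclusion, as stressed in the abstract, is that such a blow-up sequence can have \emph{at most one} blow-up point; I expect this to be the main obstacle, and the proof should combine a Pohozaev/Kazdan--Warner type identity localized near each blow-up point with the B\^ocher analysis of the limiting profile to derive a contradiction if two or more blow-up points were present.

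\textbf{Step 3: Kazdan--Warner identity kills the single blow-up point.}
With at most one blow-up point $x_0$, I would apply the Kazdan--Warner type identity for \eqref{eqn}. Passing to the limit in this identity along the blow-up sequence, the concentration forces a relation of the form $\nabla_g K(x_0) = 0$ together with a sign condition on $\Delta_g K(x_0)$; the nondegeneracy hypothesis \eqref{Knondege} together with the precise sign produced by the blow-up (the bubble contributes with a definite sign to the limiting Kazdan--Warner functional, analogous to the $\Delta K(x_0)<0$ conclusion in the classical case) yields a contradiction with \eqref{Knondege}. This establishes the uniform bound \eqref{C2norm}.

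\textbf{Step 4: Existence via degree theory.}
Given the a priori bound, I would set up a homotopy of equations, e.g. $\sigma_2(g_u^{-1}A_{g_u}) = K_t(x)$ interpolating between $K_0 \equiv \text{const}$ (for which the only solutions are, by the Liouville theorem and uniqueness up to M\"obius action, the standard ones — or one first deforms to a nondegenerate model whose degree is computable) and $K_1 = K$, keeping the path inside the class of functions for which \eqref{Knondege} holds uniformly, so that a uniform a priori bound holds along the homotopy. The Leray--Schauder type degree for second order fully nonlinear elliptic operators (the ``by now standard degree theory'') is then homotopy invariant along this path. A computation, parallel to the classical Nirenberg problem, shows the total degree of the solution set equals $1 - \deg(\nabla K, \text{Crit}_-(K))$ (the ``$1$'' coming from the trivial branch / large constant solutions that must be excised or accounted for, the second term from the index-counting of bubble-type solutions associated to $\text{Crit}_-(K)$). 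Hence if $\deg(\nabla K,\text{Crit}_-(K)) \neq 1$ the total degree is nonzero, so \eqref{eqn} has a solution. The main delicate points in this last step are verifying that the degree is well-defined (no solutions on the boundary of a large ball in the relevant function space — guaranteed by Step 1--3 applied along the homotopy) and the explicit degree computation, which requires knowing the local degree contribution of each blow-up configuration; this is where the fine blow-up analysis of Steps 2--3 is reused.
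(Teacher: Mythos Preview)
Your overall architecture matches the paper's, but two of the mechanisms you propose are not the ones actually used, and in one place your reasoning would not close.

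\textbf{One-point blow-up (your Step 2).} The paper does \emph{not} rule out multiple blow-up points via a localized Pohozaev/Kazdan--Warner identity and B\^ocher analysis. Instead it uses a purely geometric volume argument: rescaling near any putative blow-up point $y_i$ and applying the Liouville theorem gives convergence to a standard bubble; since the bubble carries the full area of $(\mathbb S^2,g)$, one shows that an arbitrarily large fraction of $\mathrm{Vol}_{g_{u_i}}(\mathbb S^2)$ concentrates in a tiny $g$-ball around $y_i$. The other piece is a diameter bound for the complement, obtained from a comparison lemma (geodesic curvature of the boundary $\to\infty$ forces the complement to have small $g_{u_i}$-diameter), together with Bishop's theorem using $K_{g_{u_i}}>0$ (which follows from $\lambda\in\Gamma_2$). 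If two disjoint blow-up points each captured $\ge 3/4$ of the volume, that is a contradiction. No B\^ocher theorem and no Kazdan--Warner identity enter here.

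\textbf{Where the B\^ocher theorem is really used, and the contradiction in Step 3.} After one-point blow-up one has $u_i(x)\le -2\ln d_g(x,x_i)+C$; the B\^ocher theorem is used to upgrade this to the \emph{optimal} decay $u_i(y)\le C-u_i(0)-4\ln|y|$ in stereographic coordinates. With this in hand, the Kazdan--Warner identities (applied to translations and to the dilation field $y\cdot\nabla$) yield, after Taylor expansion and passing to the limit, both $\nabla K(P)=0$ \emph{and} $\Delta K(P)=0$ at the blow-up point $P$. Your expectation of ``$\nabla K(x_0)=0$ plus a sign on $\Delta K(x_0)$'' does not contradict \eqref{Knondege}: since \eqref{Knondege} only says $|\nabla K|+|\Delta K|>0$, a sign on $\Delta K$ is compatible with it. You really need $\Delta K(P)=0$ as well, and the optimal decay is what makes the remainder terms in the Taylor expansion integrable enough to extract that.

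\textbf{Minor points.} The lower bound on $u$ in the paper comes from evaluating the equation at a maximum point (so $\max u\ge -\tilde C$) and then the gradient estimate, rather than from Gauss--Bonnet. In the degree computation the paper gets $\deg(F_1,\mathcal O,0)=-1+\deg(\nabla K,\mathrm{Crit}_-(K))$, the opposite sign of your $1-\deg(\cdots)$; this does not affect the existence criterion but is worth getting right. The homotopy is linear, $K_\mu=\mu K+(1-\mu)/4$, and the degree is computed via a finite-dimensional (Lyapunov--Schmidt) reduction over the M\"obius group.
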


\begin{rema}
	See Proposition \ref{C0esti} for more detailed dependence of $C$ on $K$.
\end{rema}

\begin{rema}
		If $K\in C^{2,\alpha}(\mathbb{S}^2)$, $0<\alpha<1$, and $\mathcal{O}$ is a bounded open subset of $C^{4,\alpha}(\mathbb{S}^2)$ which contains all solutions of (\ref{eqn}), then
		\begin{align*}
		\deg(\sigma_2(g_u^{-1}A_{g_u})-K,\mathcal{O},0)=-1+\deg(\nabla K, \text{Crit}_{-}(K)).
		\end{align*}
		Here the degree on the left hand side is the degree for second order nonlinear elliptic operators defined in \cite{L89}.
	\end{rema}

\begin{rema}
	Such results for the $\sigma_k$-Nirenberg problem was proved in \cite{CHY} on $\mathbb S^4$ for $\sigma_2$; in \cite{LNW1} on $\mathbb{S}^n$ for $\sigma_k$,  $n\geq 3$ and $n/2\leq k\leq n$; and in \cite{LNW2} on $\mathbb S^n$ for $\sigma_k$, $2\leq k< n/2$ and for axisymmetric functions $K$.
\end{rema}

		The following result is a Kazdan-Warner type identity.
	\begin{theo}\label{KW}
		Let $X$ be a conformal Killing vector field on $(\mathbb{S}^2,g)$, and let $g_u=e^ug$ be a conformal metric to $g$ on $\mathbb{S}^2$, where $u$ is a smooth function on $\mathbb{S}^2$, then
		\begin{align}\label{KazdanWarner}
		\int_{\mathbb{S}^2} X(\sigma_2({g_u}^{-1}A_{g_u}))e^u dV_g=0.
		\end{align}
	\end{theo}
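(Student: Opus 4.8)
The plan is to derive the identity by a direct integration-by-parts argument, exploiting the conformal invariance properties of $\sigma_2(g_u^{-1}A_{g_u})$. The starting point is the well-known fact that a conformal Killing field $X$ on $(\mathbb{S}^2,g)$ generates a one-parameter family of conformal diffeomorphisms $\varphi_t = \exp(tX)$, with $\varphi_t^* g = e^{w_t} g$ for a family of functions $w_t$, and $\frac{d}{dt}\big|_{t=0} w_t = \operatorname{div}_g X$ (up to a normalization; in dimension $2$ every vector field is, after the standard decomposition, a sum of a Killing field and a gradient conformal field, and it suffices to treat a conformal gradient field, say $X = \nabla_g \phi$ where $\phi$ is a first spherical harmonic). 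The key structural input is the transformation law: if $g_v = \varphi_t^* g_u$, then $\lambda(g_v^{-1} A_{g_v}) = \lambda\big((g_u^{-1}A_{g_u})\circ \varphi_t\big)$, so $\sigma_2(g_v^{-1}A_{g_v}) = \sigma_2(g_u^{-1}A_{g_u})\circ\varphi_t$, and the volume form transforms by $dV_{g_v} = \varphi_t^*(dV_{g_u})$.

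First I would set $F(t) := \int_{\mathbb{S}^2} \sigma_2(g_{u_t}^{-1}A_{g_{u_t}})\, dV_{g_{u_t}}$ where $e^{u_t} g := \varphi_t^* (e^u g)$, i.e. $u_t = (u\circ\varphi_t) + w_t$. By the change-of-variables formula and the invariance noted above, $F(t)$ is manifestly independent of $t$: pulling everything back by $\varphi_t$ turns the integral over $\mathbb{S}^2$ into the same integral with $g_u$ in place of $g_{u_t}$. Hence $F'(0) = 0$. Next I would compute $F'(0)$ by differentiating under the integral sign, writing $\dot u := \frac{d}{dt}\big|_{t=0} u_t = X(u) + \operatorname{div}_g X$. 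The derivative splits into two pieces: the variation of the integrand $\sigma_2$ and the variation of the volume form $dV_{g_u} = e^u dV_g$, namely $\frac{d}{dt}\big|_{t=0} dV_{g_{u_t}} = \dot u \, e^u dV_g$.

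The main technical step is then to recognize that the linearization of $u\mapsto \sigma_2(g_u^{-1}A_{g_u})$ in the direction $\dot u$, integrated against $e^u\,dV_g$, combines with the volume-form variation to collapse to a total divergence. Concretely, if $L_u$ denotes the linearized operator, one needs the formal self-adjointness / divergence-structure identity
\[
\int_{\mathbb{S}^2} \big( L_u[\dot u] + \sigma_2(g_u^{-1}A_{g_u})\,\dot u \big) e^u\, dV_g = \int_{\mathbb{S}^2} X\!\left(\sigma_2(g_u^{-1}A_{g_u})\right) e^u\, dV_g,
\]
where the right-hand side emerges after using that $\varphi_t$ is a conformal map so that the ``pure conformal-factor part'' $\operatorname{div}_g X$ of $\dot u$ contributes exactly the Lie-derivative term. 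This is where one uses the variational/Newton-tensor structure of $\sigma_2$: the Newton tensor $T_1 = \sigma_1 I - g_u^{-1}A_{g_u}$ is divergence-free with respect to $g_u$ in dimension two (a consequence of the second Bianchi identity applied to the Schouten-type tensor $A_{g_u}$), which is precisely what makes $\int L_u[\dot u]\, dV_{g_u}$ reduce to boundary-free divergence terms plus the algebraic term that pairs with the volume variation.

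I expect the \textbf{main obstacle} to be the careful bookkeeping of the two contributions in $\dot u = X(u) + \operatorname{div}_g X$: the first gives $\int X(\sigma_2)\,e^u\,dV_g$ directly by the chain rule once one knows $\frac{\partial}{\partial u}\sigma_2 \cdot X(u) + (\text{derivative terms}) = X(\sigma_2)$, while the second must be shown to contribute nothing after integration, which is exactly the content of the divergence-free Newton tensor combined with the conformal-Killing equation $\nabla_i X_j + \nabla_j X_i = (\operatorname{div}_g X) g_{ij}$. An alternative, perhaps cleaner, route avoids linearization entirely: write $G(t) := \int_{\mathbb S^2} \sigma_2(g_u^{-1}A_{g_u})\,\varphi_t^*(e^u dV_g)$ using that $\sigma_2(g_u^{-1}A_{g_u})\circ\varphi_t = \sigma_2(g_{u_t}^{-1}A_{g_{u_t}})$, observe $G(t)\equiv G(0)$ again by change of variables, and differentiate at $t=0$ to get $0 = \int_{\mathbb S^2} \sigma_2(g_u^{-1}A_{g_u})\, \mathcal L_X(e^u dV_g) + \int_{\mathbb S^2} X\!\left(\sigma_2(g_u^{-1}A_{g_u})\right) e^u dV_g$ is not quite it; rather one differentiates $t\mapsto\int \big(\sigma_2(g_{u_t}^{-1}A_{g_{u_t}})\big) dV_{g_{u_t}}$ and uses $\int \mathcal L_X(\omega)=0$ for any top form $\omega$ on the closed manifold $\mathbb S^2$, reducing the whole identity to Cartan's formula $\int_{\mathbb S^2}\mathcal L_X\big(\sigma_2(g_u^{-1}A_{g_u})\,e^u\,dV_g\big) = 0$ together with the transformation law — the delicate point there being to justify differentiating the eigenvalue-dependent integrand, which is licit since solutions lie in $\Gamma_2$ where $\sigma_2$ is smooth.
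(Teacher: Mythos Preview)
Your ingredients---the conformal flow $\varphi_t$, the linearization $L_u$, and the divergence-free Newton tensor $T_1$---are exactly those the paper uses, but your packaging via the functional $F(t)=\int\sigma_2\,dV_{g_{u_t}}$ is circular as written. The constancy of $F(t)$ is pure change of variables and holds for \emph{any} geometric integrand, so $F'(0)=0$ carries no information by itself. Worse, the transformation law you quote already gives $L_u[\dot u]=X(\sigma_2)$ directly by the chain rule, so the left side of your displayed identity equals $\int X(\sigma_2)\,dV_{g_u}+\int\sigma_2\,\dot u\,dV_{g_u}$; since $\dot u\,dV_{g_u}=\mathcal L_X(dV_{g_u})$, the second term is $-\int X(\sigma_2)\,dV_{g_u}$, and your ``identity'' collapses to $0=\int X(\sigma_2)\,dV_{g_u}$, which is the theorem itself. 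Your proposed splitting $\dot u=X(u)+\operatorname{div}_gX$ with the claim that ``the first gives $\int X(\sigma_2)\,e^u\,dV_g$ by the chain rule'' is also off: $L_u[X(u)]$ alone is not $X(\sigma_2)$. The ``alternative route'' via $\int\mathcal L_X(\sigma_2\,e^u\,dV_g)=0$ is likewise just integration by parts.

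What is non-circular---and what the paper does---is to compute the linearization a \emph{second} way, via the explicit variation formula $\frac{d}{dt}\big|_{t=0}(g_t^{-1}A_{g_t})^a_b=-\nabla^a_b\dot u-\dot u\,A^a_b$ (covariant derivatives with respect to $g_u$, and $\dot u=\operatorname{div}X$), obtaining the \emph{pointwise} relation
\[
\langle X,\nabla\sigma_2\rangle \;=\; T^b_a\bigl(-\nabla^a_b\dot u-\dot u\,A^a_b\bigr)\;=\;-T^{ab}\nabla_{ab}\dot u-2\sigma_2\,\dot u.
\]
The divergence-free property $\nabla_aT^a_b=0$ (which in turn rests on a Codazzi-type identity $\nabla_cA_{ab}=\nabla_bA_{ac}$ for $A_{g_u}$, verified in the paper by a direct coordinate computation) then gives $T^{ab}\nabla_{ab}\dot u=\nabla_a(T^{ab}\nabla_b\dot u)$, and after substituting $\dot u=\nabla_aX^a$ and rearranging one finds $\langle X,\nabla\sigma_2\rangle=\nabla_a\bigl(T^a_b\nabla^b(\operatorname{div}X)+2\sigma_2\,X^a\bigr)$, a $g_u$-divergence. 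Integrating over $(\mathbb S^2,g_u)$ finishes. The content lies in equating the two expressions for the linearization pointwise, not in differentiating a diffeomorphism-invariant integral.
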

	
	\begin{rema}
		 In dimensions $n\geq 3$, such results were proved in \cite{Han2} and \cite{Via2}, see also \cite{GHL}. For $\sigma_1$ instead of $\sigma_2$ in (\ref{KW}), it is the well-known Kazdan-Warner type identity for the Nirenberg problem. See \cite{BEzin} and \cite{KW}.
	\end{rema}

\begin{rema}
	 Note that Theorem \ref{KW} can also be obtained using \cite[Theorem 2.11]{GO}. In this case, we can check that $\sigma_2(g^{-1}A_g)$ is normally conformally variational (see \cite[Definition 2.10]{GO}) on $\mathbb{S}^2$.
\end{rema}

	As usual, there is a necessary condition for the existence of solutions of equation (\ref{eqn}). We say that a function $K$ on $\mathbb{S}^2$ satisfies the Kazdan-Warner type condition if there exists some positive $C^2$ function $f$ on $\mathbb{S}^2$ satisfying
	\begin{align*}
	\int_{\mathbb{S}^2}X(K)fdV_{g}=0,
	\end{align*}
	for any conformal Killing vector field $X$ on $\mathbb{S}^2$.

	\begin{theo}\label{nonexist}
		If $K$ does not satisfy the Kazdan-Warner type condition, then there is no $C^2$ solution to equation (\ref{eqn}).
	\end{theo}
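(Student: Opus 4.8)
The plan is to obtain Theorem \ref{nonexist} as a direct consequence of the Kazdan--Warner type identity of Theorem \ref{KW}, arguing by contraposition. Suppose equation (\ref{eqn}) admits a $C^2$ solution $u$; I will produce a positive $C^2$ function $f$ on $\mathbb{S}^2$ such that $\int_{\mathbb{S}^2}X(K)f\,dV_g=0$ for every conformal Killing vector field $X$ on $\mathbb{S}^2$. This is precisely the Kazdan--Warner type condition, so its validity contradicts the hypothesis on $K$ and forces the nonexistence of a $C^2$ solution.

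The first step is a regularity upgrade. Along $u$ one has $\lambda(g_u^{-1}A_{g_u})\in\Gamma_2\subset\Gamma_1$, so equation (\ref{eqn}) is uniformly elliptic at $u$; moreover $\sigma_2^{1/2}$ is the geometric mean on $\Gamma_2$, hence concave, and after writing the equation as $\sigma_2(g_u^{-1}A_{g_u})^{1/2}=K^{1/2}$ the operator is concave in the Hessian variable. By the Evans--Krylov theorem $u\in C^{2,\alpha}(\mathbb{S}^2)$, and then, differentiating the equation and invoking Schauder estimates (as in the higher-order estimates used in this paper), $u\in C^{3,\alpha}(\mathbb{S}^2)$; in particular $u$ is of class $C^3$, and $u\in C^\infty$ if $K\in C^\infty$. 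In either case $u$ is regular enough that Theorem \ref{KW} applies to the conformal metric $g_u=e^ug$, its proof being an integration by parts on the closed manifold $\mathbb{S}^2$ which is legitimate for $u\in C^3$.

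Now fix an arbitrary conformal Killing vector field $X$ on $(\mathbb{S}^2,g)$. Applying Theorem \ref{KW} to $g_u$ gives
\begin{align*}
\int_{\mathbb{S}^2} X\bigl(\sigma_2(g_u^{-1}A_{g_u})\bigr)\,e^u\,dV_g=0,
\end{align*}
and since $u$ solves (\ref{eqn}) we have $\sigma_2(g_u^{-1}A_{g_u})=K$ pointwise on $\mathbb{S}^2$, so this becomes
\begin{align*}
\int_{\mathbb{S}^2} X(K)\,e^u\,dV_g=0.
\end{align*}
As $X$ was arbitrary and $f:=e^u$ is a positive $C^2$ (indeed $C^3$) function on $\mathbb{S}^2$, the function $K$ satisfies the Kazdan--Warner type condition, contradicting the hypothesis. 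Hence equation (\ref{eqn}) has no $C^2$ solution.

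I expect the only genuine subtlety to be the regularity step: Theorem \ref{KW} is phrased for smooth $u$, whereas only a $C^2$ solution is assumed, and when $K$ is merely $C^2$ the bootstrap stops at $C^{3,\alpha}$. This is not a serious obstacle, as the fully nonlinear elliptic regularity theory (Evans--Krylov together with Schauder) used throughout the paper handles it and $C^3$ already suffices for the integration-by-parts derivation of the identity. Alternatively, one may bypass it by recording the equivalent integrated form $\int_{\mathbb{S}^2}\sigma_2(g_u^{-1}A_{g_u})\,e^u\,\bigl(X(u)+\operatorname{div}_g X\bigr)\,dV_g=0$, which is meaningful and derivable for any $C^2$ metric $g_u$ and into which $\sigma_2(g_u^{-1}A_{g_u})=K$ may be substituted before integrating back by parts (now legitimate since $Ke^u\in C^2$) to recover $\int_{\mathbb{S}^2}X(K)\,e^u\,dV_g=0$.
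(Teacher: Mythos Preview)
Your proof is correct and follows the same approach as the paper, which simply records that Theorem \ref{nonexist} is a corollary of Theorem \ref{KW} (with $f=e^u$). Your added discussion of the regularity upgrade from $C^2$ to $C^{3,\alpha}$, needed because Theorem \ref{KW} is stated for smooth $u$, is a detail the paper glosses over.
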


	Theorem \ref{nonexist} is a corollary of Theorem \ref{KW}.
	
	For example, if $K(x)=2+x_3$, then (\ref{eqn}) has no $C^2$ solution. 
	
	\medskip
	
	More generally, other than the $\sigma_2$-equation, we are interested in equations:
	\begin{equation}\label{feqn}
	f(\lambda(g_u^{-1} A_{g_u})) =K(x),\quad \lambda(g_u^{-1}A_{g_u})\in\Gamma,\quad \mbox{on}\
	\mathbb{S}^2,
	\end{equation}
	where the definition of $f$ and $\Gamma$ are given below.
	\medskip
	
	Let 
	\begin{align}\label{gamma2}
	\Gamma\text{ be an open convex symmetric}\text{ cone in }\mathbb{R}^2\text{ with vertex at the origin,}
	\end{align}
	and
	\begin{align}\label{gamma}
	\Gamma_2\subset \Gamma\subset\Gamma_1,
	\end{align}
	where $\Gamma_1:=\{(\lambda_1,\lambda_2):\lambda_1+\lambda_2>0\}$ 
	and $\Gamma_2:=\{(\lambda_1,\lambda_2):\lambda_1>0, \lambda_2>0\}$. 
	Here, $\Gamma$ being symmetric means that $(\lambda_1, \lambda_2)\in \Gamma$ 
	implies $(\lambda_2, \lambda_1)\in \Gamma$.
	Also, a function $f$ defined on $\Gamma$ is said to be symmetric if 
	$f(\lambda_1, \lambda_2)\equiv f(\lambda_2, \lambda_1)$.
	
	It is not difficult to see that $\Gamma$ satisfies  (\ref{gamma2}) and (\ref{gamma})  if 
	and only if $\Gamma=\Gamma_p$ for some $1\le p\le 2$ where
	\begin{equation*}
	\Gamma_p:= \{ \lambda=(\lambda_1, \lambda_2)\ :\
	\lambda_2>(p-2)\lambda_1,\ 
	\lambda_1>(p-2)\lambda_2\}.
	\label{equiv}
	\end{equation*}
	Note that the above definition of $\Gamma_1$ and $\Gamma_2$ is consistent with 
	earlier definitions.

	Let $\Gamma=\Gamma_p$, $1\leq p\leq 2$, and consider
	\begin{align}
	&f\in C^1(\Gamma)\cap C^0(\bar{\Gamma})\text{ is symmetric},\label{f1}\\
	&f\text{ is homogeneous of degree }1,\label{f2}\\
	&f>0,\quad f_{\lambda_i}:=\frac{\partial f}{\partial \lambda_i}>0\text{ in }\Gamma,\quad f\big|_{\partial\Gamma}=0,\label{f3}\\
	&f\text{ is concave in }\Gamma.\label{f5}\\
	&\sum_{i=1}^{n}f_{\lambda_i}\geq \delta\quad\text{in }\Gamma\text{ for some }\delta>0.\label{f4}
	\end{align}

	For $(f, \Gamma)=(\sigma_1, \Gamma_1)$, 
	problem (\ref{feqn}) is the Nirenberg problem. Theorem \ref{Niren} is for $(f, \Gamma)=(\sigma_2^{\frac 12}, \Gamma_2)$. 
	
	\medskip

	In order to prove Theorem \ref{Niren}, as well as to study the more general equation (\ref{feqn}), a number of ingredients and estimates are needed. The first analytical ingredient is a Liouville type theorem for $\sigma_2(\lambda(A^u))=1$, where
	\begin{align*}
	A^u=-\frac{\nabla^2u}{e^u}+\frac{1}{2}\frac{du\otimes du}{e^u}-\frac{1}{4}\frac{|\nabla u|^2}{e^u}I.
	\end{align*}
	When rescaling appropriately a blow-up sequence of solutions of (\ref{eqn}), we are led to an entire solution of $\sigma_2(\lambda(A^u))=1$ on $\mathbb{R}^2$.
	
	Equations $f(\lambda(A^u))=1$, which we call M\"obius invariant equations, is naturally associated with $A^u$. A Liouville type theorem for the M\"obius invariant equations was established in our previous paper \cite{LLL2}.

	Other ingredients and estimates, which are described below, are also needed in analyzing a sequence of blow-up solutions and giving fine asymptotic profile of such blow-up solutions.

	\medskip

	 The following is a Liouville type theorem for  $f(\lambda(A^u))=0$.
	\begin{theo}\label{dege1}
		Let $\Gamma=\Gamma_p$ for some $1<p\leq 2$, and let $u$ be a (continuous) viscosity solution of
		\begin{align*}
		\lambda(A^u)\in\partial\Gamma\quad\text{in}\ \mathbb{R}^2\backslash\{0\}.
		\end{align*}
		Then $u$ is locally Lipschitz in $\mathbb{R}^2\backslash\{0\}$ and radially symmetric about the origin. Moreover, $u(x)$ is monotonically nonincreasing in $|x|$.
	\end{theo}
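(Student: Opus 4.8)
The plan is to combine an interior regularity estimate for continuous viscosity solutions with the method of moving spheres centered at the origin, exploiting that the operator $A^u$ is invariant under Euclidean dilations, rotations and inversions.

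\textbf{Step 1 (local Lipschitz regularity).} Since $\Gamma_p\subset\Gamma_1$, the subsolution part of the equation forces that whenever a smooth function touches $u$ from above, the associated tensor has nonnegative trace; as $\sigma_1(A^u)=\lambda_1+\lambda_2=-e^{-u}\Delta u$ in dimension two, $u$ is superharmonic and, being assumed continuous, locally bounded on $\mathbb R^2\setminus\{0\}$. The supersolution part — no smooth $\varphi$ with $-\nabla^2\varphi+\tfrac12\,d\varphi\otimes d\varphi-\tfrac14|\nabla\varphi|^2 I\in\Gamma_p$ may touch $u$ from below — provides a one-sided lower bound on the Hessian in the viscosity sense, i.e.\ a semiconvexity-type estimate. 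These are exactly the structural ingredients behind the interior gradient bound for continuous viscosity solutions of conformally invariant degenerate elliptic equations, and I would invoke it (the degenerate, $f\equiv 0$ on $\partial\Gamma$ analogue of the estimates used in \cite{LLL2}) to conclude $u\in C^{0,1}_{\mathrm{loc}}(\mathbb R^2\setminus\{0\})$. In particular the Kelvin-type transforms below are well defined and the comparison principle for these equations applies to the (now Lipschitz) solutions at hand.

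\textbf{Step 2 (moving spheres).} For $\mu>0$ let $u^\mu(x):=u(\mu^2 x/|x|^2)+4\log(\mu/|x|)$ be the Kelvin transform of $u$ across $\partial B_\mu$; by M\"obius invariance $u^\mu$ again solves $\lambda(A^{u^\mu})\in\partial\Gamma$ in $\mathbb R^2\setminus\{0\}$ and $u^\mu\equiv u$ on $\partial B_\mu$. (In the coordinate $t=\log|x|$ this is the reflection $t\mapsto 2\log\mu-t$; translations and reflections in $t$ generate the dilations and inversions fixing $\{0,\infty\}$.) Writing $w_\mu:=u-u^\mu$ on $B_\mu\setminus\{0\}$, I would prove that the set of $\mu_0>0$ for which $w_\mu$ keeps a fixed sign on $B_\mu\setminus\{0\}$ for every $\mu\le\mu_0$ is nonempty and unbounded. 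For nonemptiness (``getting started'') one uses the asymptotic behaviour of $u$ near the puncture — controlled, as in a B\^ocher-type analysis, by a Harnack-type inequality near $0$ and the comparison principle — so that $w_\mu$ cannot attain a negative interior minimum for $\mu$ small. For unboundedness (``sliding'') one argues that at $\bar\mu:=\sup\{\mu_0\}$ the strong comparison principle gives either $w_{\bar\mu}\equiv 0$ — so $u$ is invariant under the inversion across $\partial B_{\bar\mu}$ — or $w_{\bar\mu}>0$ in $B_{\bar\mu}\setminus\{0\}$, and then a barrier/compactness argument extends the inequality slightly past $\bar\mu$, contradicting maximality; carrying out the same analysis for the inverted solution $x\mapsto u(x/|x|^2)+4\log(1/|x|)$ eliminates the remaining alternative. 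A standard calculus lemma from the moving-sphere method then forces $u(x)=u(|x|)$ and monotone nonincreasing in $|x|$; substituting a radial profile into $\lambda(A^u)\in\partial\Gamma_p$ and imposing continuity on all of $\mathbb R^2\setminus\{0\}$ identifies the admissible profiles.

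\textbf{Main obstacle.} The crux is Step 2. Because the equation is degenerate elliptic and $u$ is only a continuous viscosity solution, the classical Hopf lemma and strong maximum principle must be replaced by their conformally invariant viscosity counterparts; and since $\mathbb R^2\setminus\{0\}$ has two singular ends with no a priori decay of $u$, both the ``getting started'' and the ``sliding'' steps genuinely require the Harnack/comparison machinery rather than being routine. A subsidiary point is to verify that $\Gamma=\Gamma_p$ with $1<p\le 2$ indeed satisfies the structural hypotheses behind the degenerate comparison principle and the Lipschitz estimate; this is also where the hypothesis $p>1$ enters, the excluded case $p=1$ reducing to $\Delta u=0$, for which harmonic functions on $\mathbb R^2\setminus\{0\}$ such as $x_1/|x|^2$ are manifestly not radial.
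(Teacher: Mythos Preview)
Your Step 2 contains a structural error: the family of spheres you move are \emph{all centered at the origin}. The inversion $x\mapsto \mu^2 x/|x|^2$ commutes with every rotation about $0$, so the difference $w_\mu=u-u^\mu$ is blind to the angular variable---if $u(r,\theta)=f(r)+g(\theta)$ then $u^\mu(r,\theta)=f(\mu^2/r)+4\log(\mu/r)+g(\theta)$ and $w_\mu$ is purely radial regardless of $g$. No matter how successfully you run the sliding argument, comparing $u$ only with its reflections in concentric spheres can never force $u$ to be radial. The ``standard calculus lemma'' you invoke at the end (of Li--Zhu/Li--Zhang type) requires the moving-sphere inequality $u_{x,\lambda}\le u$ for spheres centered at \emph{every} point $x$, not a single fixed center; with only the origin as center the lemma simply does not apply.

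The paper's argument fixes this by taking spheres centered at arbitrary $x\neq 0$ with radius $\lambda<|x|$. The asymptotic lower bounds $\liminf_{y\to\infty}(u(y)+4\ln|y|)>-\infty$ and $\liminf_{y\to 0}u(y)>-\infty$ (Proposition \ref{Asymptotic behavior-1}) guarantee that $u_{x,\lambda}$ is bounded below near its two singular points inside $B_\lambda(x)$, so the comparison principle (Lemma \ref{Comparison-2}) yields $u_{x,\lambda}\le u$ on $\{|y-x|\ge\lambda,\ y\ne 0\}$ \emph{directly}, for every such $(x,\lambda)$, with no sliding step whatsoever. Sending the center $x=Re$ to infinity along a ray (with $\lambda=R-a$) turns the sphere into the hyperplane $\{y\cdot e=a\}$ and gives $u(y)\ge u(y-2(y\cdot e-a)e)$ for all $a>0$ and all unit $e$; varying $e$ and $a$ then yields radial symmetry and monotonicity, the latter via Hopf's lemma applied to the reflected pair. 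Your outline would be repaired by replacing the origin-centered family with this one; the ``getting started/sliding'' dichotomy you set up is then unnecessary, since the comparison principle with removable singular set already delivers the full range of $\lambda$ at once.
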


\begin{coro}\label{corliou}
	Let $\Gamma=\Gamma_p$ for some $1<p\leq 2$, and let $u$ be a (continuous) viscosity solution of
	\begin{align*}
	\lambda(A^u)\in\partial\Gamma\quad\text{in}\ \mathbb{R}^2.
	\end{align*}
	Then $u\equiv \text{constant}$ in $\mathbb{R}^2$.
\end{coro}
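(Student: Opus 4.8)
The plan is to reduce the corollary to Theorem \ref{dege1} by exploiting the translation invariance of the equation. The key observation is that a (continuous) viscosity solution $u$ of $\lambda(A^u)\in\partial\Gamma$ on all of $\mathbb{R}^2$ is, for \emph{every} point $p_0\in\mathbb{R}^2$, also a viscosity solution of the same equation on $\mathbb{R}^2\setminus\{p_0\}$; hence Theorem \ref{dege1} applies with the origin replaced by an arbitrary $p_0$, not just by $0$.

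Concretely, I would fix $p_0\in\mathbb{R}^2$ and set $v(x):=u(x+p_0)$. Since the map $u\mapsto A^u$ depends only on $\nabla^2 u$, $du$ and $|\nabla u|^2$, it commutes with translations, and the notion of continuous viscosity solution is local; so $v$ is a continuous viscosity solution of $\lambda(A^v)\in\partial\Gamma$ in $\mathbb{R}^2\setminus\{0\}$. Theorem \ref{dege1} then gives that $v$ is radially symmetric about the origin and nonincreasing in $|x|$; equivalently, writing $u(x)=\phi_{p_0}(|x-p_0|)$ for $x\ne p_0$, the function $\phi_{p_0}$ is nonincreasing on $(0,\infty)$. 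Using the continuity of $u$ at $p_0$, I would then conclude $u(p_0)=\lim_{r\to 0^+}\phi_{p_0}(r)=\sup_{r>0}\phi_{p_0}(r)\ge u(x)$ for every $x\in\mathbb{R}^2$, i.e. $u$ attains its maximum at $p_0$.

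Finally, since $p_0\in\mathbb{R}^2$ was arbitrary, $u$ attains its maximum at every point, and a continuous function on $\mathbb{R}^2$ with this property must be constant, giving $u\equiv\text{constant}$. I do not expect any genuine obstacle here: the whole argument rests on Theorem \ref{dege1}, and the only thing to check independently is the translation invariance used in the first step, which is routine. (Alternatively, one could note directly that $u$ is radially symmetric about two distinct points and invoke the elementary fact that this forces a continuous function on $\mathbb{R}^2$ to be constant.)
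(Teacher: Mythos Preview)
Your argument is correct and is precisely the natural derivation the paper has in mind: the corollary is stated without a separate proof, as an immediate consequence of Theorem~\ref{dege1}, and your use of translation invariance to obtain radial symmetry and monotonicity about every point, hence constancy, is exactly how one reads it off. The alternative you mention (radial symmetry about two distinct points forces constancy) is equally valid and perhaps even quicker.
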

	In dimensions $n\geq 3$, such results were proved in \cite{L07, L09} for locally Lipschitz $u$, $\Gamma_n\subset\Gamma\subset\Gamma_1$. In fact, as proved in \cite{LNW}, a continuous viscosity solution of such equations is automatically locally Lipschitz. Therefore, the results in \cite{L07,L09} hold for continuous viscosity solutions. Note that a first such result was proved in \cite{CGY2} for $u\in C_{loc}^{1,1}$, $n=4$, and $\Gamma=\Gamma_2$,

	If $\Gamma=\Gamma_1$, the equation $\lambda(A^u)\in\partial\Gamma$ becomes $\Delta u=0$. Corollary \ref{corliou} can be viewed as a nonlinear extension of the classical Liouville theorem: A nonnegative harmonic function in $\mathbb{R}^2$ is a constant. However, there is no sign condition assumed on $u$ in Corollary \ref{corliou}
	
	Equation $\lambda(A^u)\in\partial\Gamma$ is sometimes equivalently stated as $f(\lambda(A^u))=0$ for $f$ defined on $\bar{\Gamma}$ satisfying (\ref{f1}) and (\ref{f3}). One example is $\Gamma=\Gamma_2$, then the equation becomes $\det(A^u)=0$ together with semi-positive definiteness of $A^u$.

	We then prove the following local derivatives estimates for general $(f,\Gamma)$. For such equations, it is delicate to prove the local gradient estimates of $u$ under the assumption that $u$ is bounded from above, while it is more standard to prove the second derivatives estimates of $u$ under the assumption that $u$ is $C^1$ bounded.
	
	\begin{theo}\label{grad2}
		Let $\Gamma=\Gamma_p$ for some $1<p\leq 2$, $f$ satisfy (\ref{f1})-(\ref{f3}) and (\ref{f4}), $K$ be a $C^1$ positive function in $B_r\subset\mathbb{R}^2$, and let $u\in C^3(B_r)$ satisfy
		\begin{align}\label{feqnR2}
		f(\lambda(A^u))=K,\ \lambda(A^u)\in \Gamma\ \text{ in } B_{r}.
		\end{align}
		Then
		\begin{align*}
		|\nabla u|\leq C\quad in\ B_{r/2},
		\end{align*}
		for some constant $C$ depending only on $r$, $(f,\Gamma)$, and upper bounds of $\sup_{B_r}u$ and $\|K\|_{C^1(B_r)}$.
	\end{theo}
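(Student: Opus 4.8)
The plan is to argue by contradiction through a blow-up analysis, with the final contradiction furnished by the Liouville type theorem for the degenerate equation, Corollary~\ref{corliou}. Suppose the asserted bound fails. Then there are positive functions $K_j\in C^1(B_r)$ with $\|K_j\|_{C^1(B_r)}$ uniformly bounded, solutions $u_j\in C^3(B_r)$ of $f(\lambda(A^{u_j}))=K_j$ with $\lambda(A^{u_j})\in\Gamma$ and $\sup_{B_r}u_j$ uniformly bounded above, and points $x_j\in B_{r/2}$ with $|\nabla u_j(x_j)|\to\infty$. A standard point-selection (doubling) argument, applied to the continuous functions $|\nabla u_j|$ on $\overline{B_{3r/4}}$, produces points $y_j\in\overline{B_{3r/4}}$, with $y_j\to y_0$ compactly inside $B_r$ along a subsequence, and radii $\rho_j>0$ with $M_j:=|\nabla u_j(y_j)|\ge|\nabla u_j(x_j)|$, $M_j\rho_j\to\infty$, $\overline{B_{\rho_j}(y_j)}\subset B_r$, and $|\nabla u_j|\le 2M_j$ on $\overline{B_{\rho_j}(y_j)}$.

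Next I rescale, exploiting the two invariances of the map $u\mapsto A^u$: under the dilations $u(x)\mapsto u(x_0+\varepsilon x)+2\log\varepsilon$ one has $A^{\cdot}(x)=A^u(x_0+\varepsilon x)$, while $A^{u+c}=e^{-c}A^u$. Set $w_j(x):=u_j(y_j+M_j^{-1}x)-u_j(y_j)$ for $x\in B_{M_j\rho_j}(0)$, so that $w_j(0)=0$, $|\nabla w_j(0)|=1$, $|\nabla w_j|\le 2$ (hence $|w_j(x)|\le 2|x|$), and
\begin{equation*}
A^{w_j}(x)=\varepsilon_j\,A^{u_j}\bigl(y_j+M_j^{-1}x\bigr),\qquad f\bigl(\lambda(A^{w_j})\bigr)(x)=\varepsilon_j\,K_j\bigl(y_j+M_j^{-1}x\bigr),\qquad \varepsilon_j:=\frac{e^{u_j(y_j)}}{M_j^{2}},
\end{equation*}
with $\lambda(A^{w_j})\in\Gamma$. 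Here the hypothesis enters decisively: since $u_j(y_j)\le\sup_{B_r}u_j\le C$ and $M_j\to\infty$, we have $\varepsilon_j\to 0$, so the right-hand sides tend to $0$ locally uniformly on $\mathbb{R}^2$ while staying bounded in $C^1$. Hence the blow-up limit will solve the \emph{degenerate} equation; this is essential, since a nondegenerate limit would be a ``bubble'' (by the Liouville theorem of \cite{LLL2}), which is compatible with the normalization $|\nabla w(0)|=1$ and therefore useless for a contradiction.

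To pass to the limit, I invoke the interior second-derivative (equivalently, Hölder-gradient) estimate for $C^1$-bounded admissible solutions: combined with $|\nabla w_j|\le 2$, $w_j(0)=0$, and the uniform $C^1$ (in fact vanishing) bound on the right-hand sides, it yields locally uniform $C^{1,1}$ bounds for $w_j$ on all of $\mathbb{R}^2$. A subsequence then converges in $C^{1,\alpha}_{\mathrm{loc}}(\mathbb{R}^2)$, for every $\alpha<1$, to some $w_\infty\in C^{1,1}_{\mathrm{loc}}(\mathbb{R}^2)$ with $w_\infty(0)=0$, $|\nabla w_\infty(0)|=1$, $|\nabla w_\infty|\le 2$. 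By stability of viscosity solutions under locally uniform convergence together with $\varepsilon_j\to 0$, $w_\infty$ is a continuous viscosity solution of $\lambda(A^{w_\infty})\in\partial\Gamma$ in $\mathbb{R}^2$. Since $1<p\le 2$, Corollary~\ref{corliou} forces $w_\infty\equiv\text{constant}$, contradicting $|\nabla w_\infty(0)|=1$. This proves the estimate, and inspecting the argument yields the claimed dependence of $C$ on $r$, $(f,\Gamma)$, $\sup_{B_r}u$ and $\|K\|_{C^1(B_r)}$.

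The step I expect to be the main obstacle — and the reason the gradient estimate is more delicate than the Hessian estimate — is precisely this compactness step. One must produce locally uniform interior estimates for the rescaled solutions $w_j$ whose right-hand sides are degenerating to $0$, so that uniform ellipticity and Evans--Krylov are unavailable in the limit, while retaining enough regularity near the origin to preserve the normalization $|\nabla w_\infty(0)|=1$ under the limit. This forces one to have, and to apply, the interior second-derivative estimate for $C^1$-bounded admissible solutions in a form whose constant does not deteriorate as the right-hand side tends to $0$, and to track this degeneration carefully; the cone hypothesis $\Gamma=\Gamma_p$ with $p>1$ enters at exactly this point (and it is also what makes Corollary~\ref{corliou} applicable).
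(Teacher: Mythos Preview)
Your overall strategy---blow-up plus the degenerate Liouville theorem (Corollary~\ref{corliou})---matches the paper's, but your implementation via \emph{gradient} blow-up has a genuine gap at exactly the step you flagged as delicate.

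You normalize so that $|\nabla w_j(0)|=1$, and to carry this to the limit you need $C^{1}$ convergence, hence $C^{1,\alpha}$ (or $C^2$) bounds on the $w_j$. For this you invoke the interior second-derivative estimate (Theorem~\ref{C2}). But that estimate requires the concavity hypothesis (\ref{f5}) and $K\in C^2$, neither of which is assumed in Theorem~\ref{grad2}: the statement only asks for (\ref{f1})--(\ref{f3}), (\ref{f4}), and $K\in C^1$. (Indeed the paper remarks explicitly that the gradient estimate does not need (\ref{f5}).) So your argument, as written, proves the bound only under the stronger hypotheses $f$ concave and $K\in C^2$; for the theorem as stated you have no way to obtain the $C^{1,\alpha}$ compactness, and the normalization $|\nabla w_\infty(0)|=1$ is not justified.

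The paper avoids this by running the blow-up on a \emph{H\"older seminorm} rather than on $|\nabla u|$: one rescales so that $[v_i]_{\alpha,1}(0)=1$ for some fixed $0<\alpha<1$. The point is that this normalization survives mere $C^{\gamma}$ convergence for any $\gamma>\alpha$, so one only needs locally uniform $C^{0,1}$ bounds on the rescaled functions. Those are supplied by Proposition~\ref{grad1}---the Bernstein gradient estimate assuming \emph{two-sided} $C^0$ bounds---which uses only (\ref{f1})--(\ref{f3}), (\ref{f4}) and $K\in C^1$, exactly the hypotheses available. The two-sided bound comes for free from the H\"older rescaling (since $v_i(0)=0$ and $[v_i]_{\alpha,1}$ is controlled). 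Once the H\"older estimate is in hand, a Harnack inequality plus Proposition~\ref{grad1} finishes the gradient bound. In short: your gradient blow-up demands second-derivative compactness you do not have under the stated hypotheses; the paper's H\"older blow-up gets by with first-derivative compactness, which Proposition~\ref{grad1} provides.
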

	The above local gradient estimates do not need the concavity assumption (\ref{f5}). Note that if $\Gamma=\Gamma_p$ for some $1\leq p\leq 2$ and $f$ satisfies (\ref{f1})-(\ref{f3}) and (\ref{f5}), then (\ref{f4}) follows.
	
	We also prove the local derivatives estimate for equation $\lambda(A^u)\in\partial\Gamma$.
	\begin{theo}\label{grad3}
		Let $\Gamma=\Gamma_p$ for some $1<p\leq 2$, and let $u$ be a (continuous) viscosity solution of $\lambda(A^u)\in\partial\Gamma$ in $B_1$. Then for every $0<\epsilon<1$, there exists a constant $C$ depending only on $\Gamma$ and $\epsilon$, such that
		\begin{align*}
		|\nabla u|\leq C\ a.e.\ in\ B_{1-\epsilon}.
		\end{align*}
	\end{theo}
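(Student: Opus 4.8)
The plan is an argument by contradiction and compactness, with Corollary \ref{corliou} as the rigidity input and Theorem \ref{grad2} as an auxiliary tool. I first reduce the statement by scaling and covering. The equation $\lambda(A^u)\in\partial\Gamma$ is invariant under the substitution $v(x):=u(rx)+2\log r$ (one computes $A^v(x)=A^u(rx)$), and $|\nabla v(x)|=r\,|\nabla u(rx)|$. Hence it suffices to produce $C=C(\Gamma)$ with $|\nabla u|\le C$ a.e.\ in $B_{1/2}$ for every viscosity solution $u$ of $\lambda(A^u)\in\partial\Gamma$ in $B_1$; covering $B_{1-\epsilon}$ by balls of radius $\epsilon/2$ then gives the theorem with constant $C(\Gamma)/\epsilon$. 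Recall too that such a solution is locally Lipschitz in $B_1$ (cf.\ \cite{LNW}), so $|\nabla u|$ is defined a.e.\ and $u$ has a finite interior Lipschitz constant.

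Suppose the reduced claim fails: there are viscosity solutions $u_j$ in $B_1$ whose Lipschitz constants on $\overline{B_{1/2}}$ tend to $\infty$. I would run the usual weighted point-selection: writing $\rho_j(x)$ for the pointwise Lipschitz constant of $u_j$, pick $x_j\in B_{3/4}$ maximizing $h_j(x):=(\tfrac34-|x|)\,\rho_j(x)$. Then $M_j:=h_j(x_j)=d_jL_j\to\infty$ with $d_j:=\tfrac34-|x_j|$, $L_j:=\rho_j(x_j)$, and $\rho_j\le 2L_j$ on $B_{d_j/2}(x_j)$. Rescaling $v_j(z):=u_j(x_j+z/L_j)-u_j(x_j)$ produces viscosity solutions of $\lambda(A^{v_j})\in\partial\Gamma$ on the expanding balls $B_{M_j/2}$, with $v_j(0)=0$ and pointwise Lipschitz constant $\le 2$; a subsequence converges locally uniformly to a Lipschitz function $v$ on $\mathbb R^2$, which by stability of viscosity solutions solves $\lambda(A^v)\in\partial\Gamma$ on all of $\mathbb R^2$, hence is constant by Corollary \ref{corliou}.

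The difficulty — and the step I expect to be the main obstacle — is to turn this into a contradiction: the blow-up normalization carried at the origin (the pointwise Lipschitz constant of $v_j$ at $0$ equals $1$) is not continuous under local uniform convergence, so $v\equiv\mathrm{const}$ is not yet a contradiction. The mechanism that should rescue the argument is a strong maximum principle: a viscosity solution of $\lambda(A^u)\in\partial\Gamma_p$, being simultaneously a sub- and a super-solution, cannot carry an interior spike, since touching a would-be interior local maximum at $x_0$ from above by $\phi=u(x_0)-\tfrac C2|x-x_0|^2$ forces $\lambda(A^\phi(x_0))=(Ce^{-u(x_0)},Ce^{-u(x_0)})\in\Gamma_p$ for every $C>0$, contradicting the subsolution property, and symmetrically at interior local minima against the supersolution property (which requires $\lambda(A^\phi(x_0))\in\bar\Gamma_p\subset\{\lambda_1+\lambda_2\ge0\}$). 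Quantifying this should prevent the oscillation of $u_j$ near $x_j$ from concentrating below the natural scale $1/L_j$; one then rescales instead at the scale $s_j\in(0,d_j/2)$ at which $u_j$ first picks up a macroscopic oscillation $\operatorname{osc}_{B_{s_j}(x_j)}u_j\asymp 1$ about $x_j$, with $s_jL_j$ bounded, so that the rescaled solutions remain equi-Lipschitz, converge to a Lipschitz $v$ on $\mathbb R^2$ with $\operatorname{osc}_{B_1}v>0$, and contradict Corollary \ref{corliou}. An alternative route to the same end is to upgrade $v_j\to v$ to convergence in $C^1_{\mathrm{loc}}$ via an interior $C^{1,\alpha}$-type estimate, obtained by approximating the degenerate solutions by solutions $w$ of the nondegenerate equations $f(\lambda(A^w))=\delta$, $\delta\downarrow 0$, for a fixed admissible $(f,\Gamma)$ and invoking Theorem \ref{grad2}, whose gradient bound is uniform for $\delta\le 1$; this again leans on the no-spike rigidity above. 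Making one of these routes rigorous is the heart of the proof; granted it, the contradiction and hence the theorem follow.
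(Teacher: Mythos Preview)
You correctly identify the obstacle in a Lipschitz-scale blow-up: the normalization ``pointwise Lipschitz constant of $v_j$ at $0$ equals $1$'' is not stable under local uniform (or even $C^{0,\gamma}$, $\gamma<1$) convergence, so $v\equiv\mathrm{const}$ from Corollary~\ref{corliou} is not yet a contradiction. Neither of your proposed rescues closes the gap. In the first route there is no reason the scale $s_j$ at which $\operatorname{osc}_{B_{s_j}(x_j)}u_j\asymp 1$ satisfies $s_jL_j=O(1)$: a function with pointwise Lipschitz constant $L_j$ at $x_j$ need only satisfy $|u_j(y)-u_j(x_j)|\ge (L_j-o(1))|y-x_j|$ along \emph{some} sequence $y\to x_j$, and this sequence can live arbitrarily deep inside the scale $1/L_j$; your maximum-principle observation (ruling out strict interior extrema) does not prevent that. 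The second route---approximating by $f(\lambda(A^w))=\delta$ and invoking Theorem~\ref{grad2}---would require solving a Dirichlet problem uniformly in $\delta$ and proving $C^1$ convergence as $\delta\downarrow 0$, which is substantially more than what is available.

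The paper's resolution differs in one key point: it blows up at the $C^{0,\alpha}$ scale for a fixed $\alpha\in(0,1)$ rather than at the Lipschitz scale. One introduces $\delta(u,x;\alpha)$ as the radius $\mu$ at which $\mu^\alpha[u]_{\alpha,\mu}(x)=1$, runs the weighted point selection on $(3/4-|x|)/\delta(u_i,x;\alpha)$, and rescales $v_i(y)=u_i(x_i+\epsilon_iy)-u_i(x_i)$ with $\epsilon_i=\delta(u_i,x_i;\alpha)$. The normalization is now $[v_i]_{\alpha,1}(0)=1$, and point selection gives $[v_i]_{\alpha,1}(y)\le 4$ on each $B_\beta$, hence $|v_i|\le C(\beta)$. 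The crucial extra ingredient is Lemma~\ref{Lipschitz} (proved via the moving-spheres comparison of Lemma~\ref{Comparison-2}): a viscosity solution of $\lambda(A^u)\in\partial\Gamma$ with bounded oscillation has a Lipschitz bound depending only on that oscillation. Applied to $v_i$ this gives a \emph{uniform Lipschitz} bound on compacta, and one passes to a limit $v_*$ in $C^{0,\alpha'}_{\mathrm{loc}}$ for any $\alpha<\alpha'<1$. Because the $v_i$ are uniformly $C^{0,\alpha'}$, the near-maximizers $y_i$ in $[v_i]_{\alpha,1}(0)=1$ stay bounded away from $0$ (otherwise $|v_i(y_i)-v_i(0)|/|y_i|^\alpha\le C|y_i|^{\alpha'-\alpha}\to 0$), so the normalization survives in the limit and $[v_*]_{\alpha,1}(0)=1$, contradicting $v_*\equiv\mathrm{const}$.

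In short, the missing idea is the H\"older (rather than Lipschitz) blow-up, combined with the independent Lipschitz estimate of Lemma~\ref{Lipschitz} to gain the $\alpha'-\alpha$ slack that makes the normalization pass to the limit.
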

	
	We then establish local $C^2$ estimates.
	\begin{theo}\label{C2}
		Let $\Gamma=\Gamma_p$ for some $1<p\leq 2$, $f$ satisfy (\ref{f1})-(\ref{f3}) and (\ref{f5}), $K$ be a $C^2$ positive function in $B_r\subset\mathbb{R}^2$, and let $u\in C^4(B_r)$ satisfy
		\begin{align*}
		f(\lambda(A^u))=K,\ \lambda(A^u)\in \Gamma\ \text{ in } B_{r}.
		\end{align*}
		Then
		\begin{align*}
		|\nabla^2 u|\leq C\quad in\ B_{r/2},
		\end{align*}
		for some constant $C$ depending only on $r$, $(f,\Gamma)$, and upper bounds of $\sup_{B_{r}}u$ and $\|K\|_{C^2(B_{r})}$.
	\end{theo}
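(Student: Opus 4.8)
\noindent\emph{Proof strategy.} The plan is to establish a local second-derivative bound of maximum-principle type for the concave fully nonlinear operator $u\mapsto f(\lambda(A^u))$, adapting to the two-dimensional setting the interior estimates known for conformally invariant $\sigma_k$-type equations. As a preliminary reduction, note that (\ref{f5}) implies (\ref{f4}) for $\Gamma=\Gamma_p$, $1\le p\le 2$, so Theorem~\ref{grad2} applies; covering $B_{3r/4}$ by balls of radius $r/4$ and applying it on each gives $|\nabla u|\le C_1$ in $B_{3r/4}$, with $C_1$ depending only on $r$, $(f,\Gamma)$, $\sup_{B_r}u$, and $\|K\|_{C^1(B_r)}$. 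In particular $\operatorname{osc}_{B_{3r/4}}u$ is bounded, so $e^{\pm u}$ is comparable --- with controlled constants --- to its value at any fixed point of $B_{3r/4}$. It then suffices to bound $\lambda_{\max}(A^u)$ from above on $B_{r/2}$: since $\lambda(A^u)\in\Gamma_p$ forces $\lambda_{\min}(A^u)\ge(p-2)\lambda_{\max}(A^u)$ and $\lambda_{\max}(A^u)\ge\tfrac12\operatorname{tr}A^u>0$, an upper bound on $\lambda_{\max}(A^u)$ bounds $|A^u|$; writing $B^u:=e^uA^u=-\nabla^2 u+\tfrac12\,du\otimes du-\tfrac14|\nabla u|^2 I$, it follows that $|B^u|$ is bounded, and solving for $\nabla^2 u$ using $|\nabla u|\le C_1$ gives $|\nabla^2 u|\le C$.

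To bound $\lambda_{\max}(A^u)$ I would run the classical auxiliary-function argument. Fix a cutoff $\eta\in C_c^\infty(B_{3r/4})$ with $\eta\equiv1$ on $B_{r/2}$, $0\le\eta\le1$, $|\nabla\eta|^2+|\nabla^2\eta|\le C\eta$, and for unit vectors $\xi$ consider
\[
G(x,\xi)=\eta(x)^2\,\langle A^u(x)\xi,\xi\rangle\,e^{-\beta|\nabla u(x)|^2},
\]
with $\beta>0$ to be chosen. Because $\lambda_{\max}(A^u)>0$ everywhere, $\max G>0$, and since $\eta$ has compact support this maximum is attained at an interior point $(x_0,\xi_0)$; we may assume $\Lambda:=\lambda_{\max}(A^u(x_0))$ is as large as we please. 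After a rotation take $\xi_0=e_1$ with $A^u(x_0)$ diagonal, and put $h(x):=\langle A^u(x)e_1,e_1\rangle$, so $h(x_0)=\Lambda$. Then $\log(\eta^2 h\,e^{-\beta|\nabla u|^2})$ has an interior maximum at $x_0$: the first-order condition there expresses $\nabla h$ --- hence the third derivatives $u_{11i}$ it contains --- through $\nabla\eta$, $\nabla(|\nabla u|^2)$, and lower-order data, while the second-order condition gives $L[\log(\eta^2 h\,e^{-\beta|\nabla u|^2})](x_0)\le0$, where $L=\sum a^{ij}\partial_{ij}$ is the linearization of $u\mapsto f(\lambda(A^u))$ at $u$ --- positive definite by (\ref{f3}), diagonal at $x_0$ with $a^{ii}=e^{-u}f_{\lambda_i}(\lambda(A^u))$, and with $\operatorname{tr}(a^{ij})\ge\delta e^{-u}>0$ by (\ref{f4}).

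It then remains to combine $L[\log(\cdots)](x_0)\le0$ with the equation differentiated twice in $x_1$ (here the assumption $u\in C^4$ is used). Differentiating $f(\lambda(A^u))=K$ twice in $x_1$ and invoking concavity (\ref{f5}), which renders the Hessian-of-$f$ contribution nonpositive, yields $\sum a^{ij}\partial_{11}(A^u)_{ij}\ge e^{-u}\partial_{11}K\ge-e^{-u}\|K\|_{C^2(B_r)}$; since $A^u$ is affine in $\nabla^2 u$, the fourth-order part of $\sum a^{ij}\partial_{11}(A^u)_{ij}$ equals that of $\sum a^{ij}\partial_{ij}h$, so $\sum a^{ij}\partial_{ij}h$ is bounded below modulo terms of order at most three in $u$. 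The third-order terms are eliminated via the first-order critical-point condition and the once-differentiated equation, while all first- and zeroth-order quantities are controlled by the gradient bound and by the comparability of $e^{\pm u}$. The decisive point is that the first-order terms $\tfrac12\,du\otimes du-\tfrac14|\nabla u|^2 I$ present in $A^u$ --- absent from the bare Monge--Amp\`ere operator, for which interior second-derivative estimates are false --- produce, after these manipulations, a favorable positive term in the resulting inequality. Choosing $\beta$ large enough to absorb the $\beta$-dependent errors into this term and dividing by $\operatorname{tr}(a^{ij})\ge\delta e^{-u}>0$ then forces $\Lambda\le C$ with $C$ of the permitted type. Hence $G\le C$ on $B_{3r/4}\times S^1$, so $\lambda_{\max}(A^u)\le C$ on $B_{r/2}$ (where $\eta\equiv1$ and $e^{-\beta|\nabla u|^2}\le1$), and the reduction above completes the proof.

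I expect the main obstacle to be precisely this last step: commuting $\partial_{11}$ past both $x\mapsto A^u(x)$ and $M\mapsto f(\lambda(M))$, bookkeeping the many lower-order terms, and --- above all --- isolating and quantitatively dominating the good term generated by the first-order part of $A^u$, which is what distinguishes this equation from Monge--Amp\`ere and makes an interior rather than merely global estimate possible. A secondary subtlety is the dependence on $u$: one converts the possibly unbounded $u$ into a harmless bounded oscillation by means of the gradient estimate and estimates only the top eigenvalue of $A^u$, so that no lower bound on $u$ --- or on $K$ --- enters.
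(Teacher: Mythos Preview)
Your proposal is correct and follows the same Bernstein-type strategy as the paper: reduce via Theorem~\ref{grad2} to a situation with controlled gradient, then run a maximum-principle argument on a cutoff-localized auxiliary quantity, using concavity (\ref{f5}) to control the fourth-order terms and the bound $\sum f_{\lambda_i}\ge\delta$ (which follows from (\ref{f5})) to close. The paper's execution differs only in the choice of test function: rather than $\eta^2\lambda_{\max}(A^u)e^{-\beta|\nabla u|^2}$, it sets $v=-u$, $W=e^{-v}A^u$, and works with the trace-based quantity $Q=\eta\bigl(\Delta v+\tfrac12|\nabla v|^2\bigr)$. This sidesteps the eigenvector bookkeeping and the extra parameter $\beta$: the additive $\tfrac12|\nabla v|^2$ in $Q$ itself generates, after applying $f^{ij}\partial_i\partial_j$ and rewriting via $W$, the good term $\tfrac12\bigl(\sum_i f^{ii}\bigr)|\nabla^2 v|^2$, which dominates all errors once one multiplies through by $\eta$. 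Your identification of the source of the good term is correct---in the paper's computation it emerges exactly from twice differentiating the $-\tfrac14|\nabla v|^2\delta_{ij}$ piece of $W$---but your simultaneous appeal to choosing $\beta$ large is then redundant; the structural term alone already suffices, which is why the paper's auxiliary function needs no exponential weight.
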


The general local gradient estimates in dimensions $n\geq3$ were derived in \cite{L09} using blow-up analysis and the Liouville theorem for degenerate equations together with Bernstein type arguments. Note that the local gradient estimates for $f=\sigma_k^{1/k}$ and $\Gamma=\Gamma_k$ were proved in \cite{GW}; see also \cite{GLW, GW3, LL1, STW, XJWang} for efforts in achieving further generality. The local $C^2$ estimates in dimensions $n\geq3$ were discussed in \cite{CGY2, Chen, GW, LL1}.

\medskip
	The following theorems are B\^ocher type theorems, which characterize the asymptotic behavior of solutions near isolated singularities. They are for $\Gamma=\Gamma_p$, $1<p\leq 2$. For $1<p<2$, the equation is $\lambda(A^u)\in\partial\Gamma$. For $p=2$, the equation is $\lambda(A^u)\in\bar\Gamma$, i.e. supersolutions to $\lambda(A^u)\in\partial\Gamma$. Note that for $p=1$, the equation $\lambda(A^u)\in\partial\Gamma$ is $\Delta u=0$, and additional assumption is needed for the B\^ocher theorem.
	\begin{theo}\label{Bocher1}
		If $\Gamma=\Gamma_2$, let $u\in LSC(B_1\backslash\{0\})\cap L_{loc}^{\infty}(B_1\backslash\{0\})$ be a viscosity supersolution of $\lambda(A^u)\in\bar\Gamma$ in $B_1\backslash\{0\}$. Then either $u$ can be extended to a function in $C_{loc}^{0,1}(B_1)$ or $u=-4\ln |x|+C$ for some constant $C$, and $A^u=0$. Moreover, in the former case, there holds
		\begin{align*}
		\|w\|_{C^{0,1}(B_{1/2})}\leq C(\Gamma)\max_{\partial B_{3/4}}w
		\end{align*}
		where $w=e^{-u/4}$. 
	\end{theo}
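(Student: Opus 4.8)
The strategy is to pass to the unknown $w:=e^{-u/4}$ and reduce everything to elementary facts about convex functions. Writing, for a positive function $v$,
\[
W[v]:=v\,\nabla^{2}v+\nabla v\otimes\nabla v-|\nabla v|^{2}I ,\qquad A^{\varphi}=4\,(e^{-\varphi/4})^{2}\,W[e^{-\varphi/4}]\quad\text{for }\varphi\in C^{2},
\]
the second being a direct computation. Since $\Gamma=\Gamma_{2}$, the condition $\lambda(A^{\varphi})\in\bar\Gamma_{2}$ is exactly $A^{\varphi}\ge 0$, i.e. $W[e^{-\varphi/4}]\ge 0$, the prefactor $4(e^{-\varphi/4})^{2}$ being positive. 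As $\varphi\mapsto e^{-\varphi/4}$ is a smooth decreasing bijection of $\mathbb{R}$ onto $(0,\infty)$, a $C^{2}$ function $\varphi$ touches $u$ from below at $x_{0}$ iff $\psi:=e^{-\varphi/4}$ touches $w$ from above at $x_{0}$. Hence the hypothesis is equivalent to: $w\in USC(B_{1}\setminus\{0\})\cap L^{\infty}_{loc}(B_{1}\setminus\{0\})$, $w>0$, and $W[\psi](x_{0})\ge 0$ whenever $\psi\in C^{2}$ touches $w$ from above at some $x_{0}\in B_{1}\setminus\{0\}$.

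The key observation is that $W[\psi]\ge 0$ forces $\nabla^{2}\psi\ge 0$: for a unit vector $e$,
\[
\langle W[\psi]e,e\rangle=\psi\,\partial_{ee}\psi+\langle\nabla\psi,e\rangle^{2}-|\nabla\psi|^{2}=\psi\,\partial_{ee}\psi-|\nabla\psi-\langle\nabla\psi,e\rangle e|^{2},
\]
so $W[\psi]\ge 0$ and $\psi>0$ give $\partial_{ee}\psi\ge 0$ for every $e$. Thus every $C^{2}$ function touching $w$ from above has nonnegative Hessian at the contact point, so $w$ is convex on $B_{1}\setminus\{0\}$ (convex along every segment omitting the origin). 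Two consequences of convexity alone now follow. First, restricting $w$ to the chord of $B_{3/4}$ through a point $z\neq 0$ perpendicular to $z$ — a segment omitting $0$ with endpoints on $\partial B_{3/4}$ — and using upper semicontinuity at the endpoints gives $w(z)\le\max_{\partial B_{3/4}}w=:M$; letting $z\to 0$, $w$ is bounded above near $0$, hence extends to a convex function on $B_{1}$ (an isolated point is a removable singularity for locally bounded convex functions), still denoted $w$, with $w(0)=:m\ge 0$. Second, since $w\ge 0$ and $\sup_{B_{3/4}}w=M$ (maximum principle for convex functions), the standard Lipschitz estimate for convex functions gives $\|w\|_{L^{\infty}(B_{1/2})}\le M$ and $\mathrm{Lip}(w;B_{1/2})\le 4M$, i.e. the asserted bound $\|w\|_{C^{0,1}(B_{1/2})}\le C\max_{\partial B_{3/4}}w$.

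It remains to establish the dichotomy according to $m$. If $m>0$: on each $\overline{B_{r}}\subset B_{1}$ the convex function $w$ is continuous with $0<\min_{\overline{B_{r}}}w\le\max_{\overline{B_{r}}}w<\infty$, so $u=-4\log w$ is Lipschitz on $\overline{B_{r}}$; hence $u\in C^{0,1}_{loc}(B_{1})$, the first alternative. If $m=0$: convexity and $w(0)=0$ give $w(tx_{0})\le t\,w(x_{0})$ for $x_{0}\in\partial B_{3/4}$, $t\in[0,1]$, i.e. $w(x)\le\tfrac43 M|x|$ on $B_{3/4}$. Apply the inversion $\phi(x)=x/|x|^{2}$: by the M\"obius invariance of $A^{\cdot}$ from \cite{LLL2}, $u^{*}(x):=u(\phi(x))-4\log|x|$ satisfies $A^{u^{*}}(x)=O(x)^{\mathsf{T}}A^{u}(\phi(x))O(x)$ for some $O(x)\in O(2)$, so $u^{*}$ is a viscosity supersolution of $\lambda(A^{u^{*}})\in\bar\Gamma_{2}$ on $\{|x|>1\}$, and $w^{*}:=e^{-u^{*}/4}=|x|\,w(\phi(x))$ is a positive convex function on $\{|x|>1\}$ with $w^{*}\le\tfrac43 M$ on $\{|x|>\tfrac43\}$. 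But a convex function bounded above on an exterior domain $\{|x|>R\}$ is constant: every such point lies on a full line contained in $\{|x|>R\}$, namely the line through it perpendicular to its position vector, on which the function is convex and bounded above, hence constant; these lines pairwise meet inside $\{|x|>R\}$, forcing one value $\gamma$. Thus $w^{*}\equiv\gamma>0$ on $\{|x|>\tfrac43\}$, i.e. $w(y)=\gamma|y|$ for $0<|y|<\tfrac34$; repeating with $B_{3/4}$ replaced by $B_{r}$ for every $r<1$ and matching constants on overlaps gives $w\equiv\gamma|x|$ on $B_{1}\setminus\{0\}$, so $u=-4\log|x|+C$ with $C=-4\log\gamma$; since $W[\gamma|x|]\equiv 0$, also $A^{u}=0$. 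This is the second alternative.

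The algebraic identity for $A^{\varphi}$ and the one-dimensional convexity facts are routine. I expect the two delicate points to be: (i) the viscosity step identifying a $USC$ function all of whose $C^{2}$ touching functions from above have nonnegative Hessian at contact with a genuinely convex function, carried out keeping track of the mere upper semicontinuity of $w$ near the origin (note that in the singular case no smooth function touches $\gamma|x|$ from above at $0$, so the condition there is vacuous); and (ii) verifying that the M\"obius change of variables indeed transports viscosity supersolutions, which rests on the transformation law for $A^{\cdot}$ from \cite{LLL2}. Everything else reduces to elementary convexity.
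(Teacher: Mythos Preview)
Your argument is correct and takes a genuinely different route from the paper.

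The paper proves Theorem \ref{Bocher1} by passing to the radial minimum $v(r)=\min_{\partial B_r}u$, invoking the classification of radial solutions (Theorem \ref{radialsol}) together with the comparison apparatus (Lemma \ref{radbehavior}, Corollary \ref{comp3}) to show that either $v(r)=-4\ln r+C$ or $v$ is bounded near the origin, and then transferring the dichotomy back to $u$ via the strong maximum principle in the first case and Proposition \ref{removable2} (comparison with the explicit barriers $\xi_{\bar x}$) in the second. Your approach instead exploits an algebraic feature special to $\Gamma_2$: the substitution $w=e^{-u/4}$ turns the supersolution condition into genuine convexity of $w$, after which boundedness, removability of the puncture, and the Lipschitz bound are immediate from elementary convex-function theory; the singular alternative is then identified via M\"obius inversion and the Liouville-type fact that a bounded convex function on a planar exterior domain is constant. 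Your route is shorter and more transparent for $\Gamma_2$, bypassing the radial classification and comparison lemmas entirely; the paper's route, by contrast, is what carries over to $\Gamma_p$ with $1<p<2$ (Theorem \ref{Bocher2}), where no substitution reduces the problem to convexity and the radial barriers become essential.

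Two small points. First, your claim that the perpendicular lines ``pairwise meet inside $\{|x|>R\}$'' is not literally true (two such lines can be parallel), but the conclusion is unaffected: any two parallel lines at distance $>R$ from the origin are both crossed, inside $\{|x|>R\}$, by a third line at distance $>R$, so the constants match. Second, the viscosity-to-convexity step you flag as delicate is indeed routine once written out: if $w$ were not convex along some segment, then after subtracting an affine function one finds an interior positive maximum of a USC function, and any small concave quadratic perturbation of that affine function produces a $C^2$ test from above with strictly negative Hessian at the contact point, contradicting the hypothesis.
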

	\begin{theo}\label{Bocher2}
		Let $\Gamma=\Gamma_p$ for some $1<p<2$, and let $u\in C_{loc}^{0}(B_1\backslash\{0\})$ be a viscosity solution of $
		\lambda(A^u)\in\partial\Gamma \text{ in }B_1\backslash\{0\}$. Then either $u$ can be extended to a function in $C_{loc}^{0,p-1}(B_1)$, or
		\begin{align*}
		u=\dfrac{4(2-p)}{p-1}\ln(r^{-(p-1)/(2-p)}+\mathring w)+a,
		\end{align*}
		where $a=\sup_{B_1\backslash\{0\}}(u(x)+4\ln |x|)<+\infty$, and $\mathring w\in L_{\text{loc}}^\infty(B_1)$ is a nonpositive function satisfying
		\begin{align*}
		\min_{\partial B_r} \mathring w\leq \mathring w\leq \max_{\partial B_r} \mathring w \text{ in }B_r\backslash\{0\},\quad \forall\ 0<r<1.
		\end{align*}
		Moreover, in the former case, there holds
		\begin{align*}
		\|w\|_{C^{0,p-1}(B_{1/2})}\leq C(\Gamma)\max_{\partial B_{3/4}}w,
		\end{align*}
		where $w=\exp(-\frac{p-1}{4}u)$.
	\end{theo}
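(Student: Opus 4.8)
The plan is to study the positive continuous function $w=\exp(-\tfrac{p-1}{4}u)$ on $B_1\setminus\{0\}$ and compare it with explicit families of radial solutions. First I would record the algebra behind the substitution: writing $v=e^{-u/2}$ one computes $A^u=2v\,\nabla^2 v-|\nabla v|^2 I$, and then, with $w=v^{(p-1)/2}$, that $A^u$ equals a positive function of $w$ times
\[
B^w:=(p-1)\,w\,\nabla^2 w+(3-p)\,\nabla w\otimes\nabla w-|\nabla w|^2 I .
\]
Since $w>0$, the equation $\lambda(A^u)\in\partial\Gamma_p$ is equivalent to $\lambda(B^w)\in\partial\Gamma_p$, a degenerate elliptic equation that is positively $2$--homogeneous in $w$ (reflecting $u\mapsto u+\mathrm{const}$) and invariant under translations and dilations in $x$, and for which the viscosity comparison arguments of \cite{LLL2,LNW} apply. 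I would then classify the radial solutions: in the variable $t=-\ln r$ one has $\lambda(A^u)\propto(-\ddot u-\dot u+\tfrac14\dot u^2,\ \dot u-\tfrac14\dot u^2)$, so imposing $\lambda(A^u)\in\partial\Gamma_p$ and solving the resulting ODE gives exactly the constants and the two--parameter family $u=\tfrac{4(2-p)}{p-1}\ln(\beta r^{-\gamma}+\alpha)$, $\gamma:=\tfrac{p-1}{2-p}>0$, with $\beta>0$ and $\alpha\le 0$ (the case $\alpha=0$ being $u=-4\ln r+C$, for which $A^u\equiv0$). Equivalently, in the variable $W:=w^{-1/(2-p)}=\exp(\tfrac{p-1}{4(2-p)}u)$ the radial solutions are the affine functions of $r^{-\gamma}$, namely $W=\alpha+\beta r^{-\gamma}$; these, and in particular $w\equiv c|x|^{p-1}$ and $w\equiv\mathrm{const}$, serve as barriers.

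Next, applying Theorem \ref{grad3} to the rescalings $y\mapsto u(x_0+\tfrac{|x_0|}{4}y)$, which solve the same equation since $\partial\Gamma_p$ is a cone, I would get $|x|\,|\nabla u(x)|\le C(\Gamma)$ for $0<|x|<\tfrac34$, hence $\operatorname{osc}_{\partial B_r}u\le C(\Gamma)$ and the scale--invariant Harnack inequality $\sup_{\partial B_r}w\le C(\Gamma)\inf_{\partial B_r}w$ for $0<r<\tfrac34$. Comparing $W$ with the radial family $\alpha+\beta r^{-\gamma}$ on annuli and using this Harnack bound, one shows that $r^\gamma\inf_{\partial B_r}W$ converges as $r\to0$ to some $\beta_0\in[0,\infty]$; it is finite because no radial solution tends to $-\infty$ at $0$, so $W$ cannot tend to $0$. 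If $\beta_0=0$ then $W$, hence $u$ and $w$, is bounded near $0$ and, again by comparison with radial solutions (the bounded ones being constants), has a limit there; if $\beta_0>0$ then $W(x)=\beta_0|x|^{-\gamma}(1+o(1))$, i.e.\ $u\to+\infty$.

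In the case $\beta_0=0$: since a single point has zero capacity for the equation, $u$ extends across $0$ to a viscosity solution in $B_1$, so Theorem \ref{grad3} gives $|\nabla u|\le C(\Gamma)$ in $B_{1/2}$; together with the maximum--principle/Harnack bound $\max_{\overline{B}_{3/4}}w\le C(\Gamma)\max_{\partial B_{3/4}}w$ and comparison near $0$ against the barriers $w(0)\pm C_0|x|^{p-1}$ built from the exact solutions $c|x|^{p-1}$, this yields $w\in C^{0,p-1}_{\mathrm{loc}}(B_1)$ with $\|w\|_{C^{0,p-1}(B_{1/2})}\le C(\Gamma)\max_{\partial B_{3/4}}w$. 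In the case $\beta_0>0$: put $a:=\sup_{B_1\setminus\{0\}}(u+4\ln|x|)$; from the identity $\ln(r^\gamma W)=\tfrac{p-1}{4(2-p)}(u+4\ln r)$ one gets $r^\gamma W\le e^{\frac{p-1}{4(2-p)}a}$, and a refinement of the comparison with the two--parameter radial family forces $r^\gamma W\to e^{\frac{p-1}{4(2-p)}a}=:\beta_0$ and $W-\beta_0 r^{-\gamma}$ to be bounded near $0$. Then $\mathring w:=\beta_0^{-1}W-r^{-\gamma}=\beta_0^{-1}\exp(\tfrac{p-1}{4(2-p)}u)-r^{-\gamma}$ satisfies, by construction, $u=\tfrac{4(2-p)}{p-1}\ln(r^{-\gamma}+\mathring w)+a$, as well as $\mathring w\le 0$ (by the definition of $a$) and $\mathring w\in L^\infty_{\mathrm{loc}}(B_1)$ (by the boundedness just noted); and comparing $W$ with radial solutions of the form $\beta r^{-\gamma}+c$ whose leading coefficient slightly exceeds $\beta_0$ (a barrier argument, letting the excess tend to $0$) yields $\min_{\partial B_r}\mathring w\le\mathring w\le\max_{\partial B_r}\mathring w$ in $B_r\setminus\{0\}$ for every $0<r<1$.

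The hard part will be the singular case: upgrading the sphere--wise comparison with the radial family to the pointwise statements $|x|^\gamma W(x)\to\beta_0$ and $W-\beta_0|x|^{-\gamma}\in L^\infty$ near $0$ — the first requires extracting a monotonicity from the comparison principle for $\lambda(B^w)\in\partial\Gamma_p$, the second a matched--asymptotics argument exploiting that the only radial solutions lying between $r^{-\gamma}$ and the constants are their linear combinations — and then deducing the maximum--principle inequalities for $\mathring w$. Setting up the viscosity comparison principle for this degenerate equation against the radial barriers, and the point--removability used when $\beta_0=0$, are the remaining technical ingredients one must handle carefully.
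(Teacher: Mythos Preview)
Your overall strategy---split according to whether $u+4\ln|x|$ tends to $-\infty$ or stays bounded below, and handle each case by comparison with explicit radial solutions---is the paper's strategy, and the substitution $w=e^{-(p-1)u/4}$ with your computation of $B^w$ is correct. There are, however, several concrete gaps.

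\textbf{The radial classification is incomplete.} Theorem~\ref{radialsol} lists five families; your ``$W$ affine in $r^{-\gamma}$'' description covers (b), (d), (e) but \emph{misses} the family $u=-\tfrac{4}{p-1}\ln(r^{p-1}+C_1)+C_2$, $C_1>0$ (case~(c)), for which it is $w$ (not $W$) that is affine, in $r^{p-1}$. These are exactly the barriers $\xi_{\bar x}$ used in the regular case (Proposition~\ref{removable2}); your own barriers $w(0)\pm C_0|x|^{p-1}$ are of this type and are \emph{not} ``built from $c|x|^{p-1}$'' by any symmetry of the equation.

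\textbf{Your argument for $\beta_0<\infty$ is backwards.} $\beta_0=\infty$ means $r^\gamma W\to\infty$, i.e.\ $u+4\ln r\to+\infty$; the statement ``$W$ cannot tend to $0$'' rules out $u\to-\infty$, which is a different issue. In the paper this finiteness comes from comparing $v(r):=\min_{\partial B_r}u$ with type-(d) solutions $w_{\rho,j}$ matching $v$ at two radii, which forces $\limsup_{r\to 0}(v(r)+4\ln r)\le b_{\rho,j}<\infty$.

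\textbf{The main gap is the pointwise limit in the singular case.} Comparison alone gives only the sphere-wise limit $\lim_{r\to 0}(v(r)+4\ln r)=a$, and Harnack (via Theorem~\ref{grad3}) only yields $\max_{\partial B_r}u\le C(\Gamma)+\min_{\partial B_r}u$. To get $u(x)+4\ln|x|\to a$ \emph{pointwise} the paper uses a blow-up argument: if $A:=\limsup(u+4\ln|x|)>a$, rescaling around a bad sequence produces a viscosity solution $u_*$ of $\lambda(A^{u_*})\in\partial\Gamma$ on $\mathbb R^2\setminus\{0\}$ with $\min_{\partial B_1}u_*<\max_{\partial B_1}u_*$, contradicting the Liouville theorem (Theorem~\ref{dege1}). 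Your proposed ``monotonicity from the comparison principle'' is not supplied, and it is not clear how to avoid the Liouville input here.

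\textbf{The regular case is handled differently in the paper.} The paper does \emph{not} extend $u$ to a solution on $B_1$ and then apply Theorem~\ref{grad3}; it uses only the supersolution removability (Lemma~\ref{removable}) and compares directly with translated type-(c) barriers $\xi_{\bar x}(x)=-\tfrac{4}{p-1}\ln(4^{p-1}|x-\bar x|^{p-1}+b)+c$ on $B_{3/4}$, obtaining $|w(x)-w(\bar x)|\le 4^{p-1}|x-\bar x|^{p-1}\max_{\partial B_{3/4}}w$ directly. Your route presupposes that $u$ has a two-sided limit at $0$ so that the extension is a \emph{solution}, but the removability available in the paper is stated only for supersolutions.
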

	\medskip

	The B\^ocher type theorems for equation $\lambda(A^u)\in\partial\Gamma$ in $B_1\backslash\{0\}$ when $n\geq 3$ were established in \cite{LN1}. The behaviors of our solutions are quite different from the results in \cite{LN1}.
	
	In the case of non-degenerate  elliptic equation $\sigma_k(\lambda(A^u))=1$ for $n\geq3$, the local behavior near isolated singularity was studied in \cite{CGS} for $k=1$ and in \cite{HLT} for $2\leq k\leq n$. They proved that $u(x)=u_*(|x|)(1+O(|x|^\alpha))$ where $u_*$ is some radially symmetric solution of $f(\lambda(A^u))=1$ on $\mathbb{R}^n\backslash\{0\}$ and $\alpha$ is some positive number. See \cite{HLL} for expansions to arbitrary orders.  See also \cite{WYP} for expansions of solutions of conformal quotient equations.

	\medskip

	Recall that when dimension $n\geq 3$, the existence of solutions of the $\sigma_k$-Yamabe problem has been proved for $k\geq n/2$, $k=2$ or when $(M,g)$ is locally conformally flat, the compactness of the set of solutions has been proved for $k\geq n/2$ when the manifold is not conformally equivalent to the standard sphere $-$ they were established in \cite{CGY1,GeW,GW2,GV,LL1,LN2,STW}. For more recent works on $\sigma_k$-Yamabe type problems, see for example \cite{AbEs,BCE,BoSheng,Case,CaseW1,CaseW2,DN,FangW,FangW2,GLN,GS,HLL,HqLyc,He,HXZ,JS1,JS2,JS3,LN3,LN4,LW,Santos,Sui,T2} and references therein. However, there are still many challenging open problems on general compact Riemannian manifolds - the compactness remains open for $2\leq k\leq n/2$ and the existence remains open for $2<k<n/2$. One motivation of studying the equations in dimension two is to gain insights and inspirations into solving the above mentioned open problems in dimensions $n\geq 3$.
	
	The strategies of the proofs of our main theorems are described as follows. Liouville type theorem Theorem \ref{dege1} is established using comparison principles and asymptotic behavior of solutions. For local gradient estimates Theorem \ref{grad2}, we first prove it using Bernstein type arguments assuming in addition $u$ is also bounded from below, then establish the result using blow-up analysis and Theorem \ref{dege1}. Theorem \ref{grad3} is proved in a similar way. Local $C^2$ estimates Theorem \ref{C2} is obtained by Bernstein type arguments. In the proof of B\^ocher type theorems Theorem \ref{Bocher1} and \ref{Bocher2}, we first classify all the radially symmetric (continuous) viscosity solutions in any annulus $\{0\leq a\leq |x|\leq b\leq \infty\}$, then establish the results with the help of a comparison principle.
	
	In the proof of existence and compactness result Theorem \ref{Niren}, we first prove the compactness part. Since $C^1$ and $C^2$ estimates are already established, the only issue left is a $C^0$ estimate. We first analyze the behavior of a sequence of blow-up solutions and prove that every sequence of solutions cannot blow up at more than one point with the help of the Liouville type theorem for $\sigma_2(\lambda(A^u))=1$. Then we obtain an optimal decay estimate, where the B\^ocher type theorem is used. Next we use a Kazdan-Warner type identity together with the nondegeneracy condition on $K$ and the above one point blow-up behavior to prove the $C^0$ estimate. The existence part is proved thanks to the degree theory and compactness of solutions.

	The rest of our paper is organized as follows. In Section \ref{degliou}, we establish the Liouville type theorem Theorem \ref{dege1} and the local derivatives estimates Theorem \ref{grad2}, Theorem \ref{grad3} and Theorem \ref{C2}. B\^ocher type theorems Theorem \ref{Bocher1} and Theorem \ref{Bocher2} are proved in Section \ref{bochertype}. The existence and compactness theorem Theorem \ref{Niren} is proved in Section \ref{existence}. Three calculus lemmas are given in Appendix \ref{appendix} for readers' convenience.

	\section*{Acknowledgements}
	The first named author's research was partially supported by NSF Grants DMS-1501004, DMS-2000261, and Simons Fellows Award 677077. The second named author's research was partially supported by NSF Grants DMS-1501004, DMS-2000261. The third named author's research was partially supported by NSERC Discovery Grant.

	\medskip

\section{Liouville type theorems and Local Estimates}\label{degliou}

\medskip

\subsection{Preliminaries}
\ 

\subsubsection{Viscosity solutions}

\

In order to introduce the definition of viscosity solution, let us first define the set of upper semicontinuous and lower semicontinuous functions.

For any set $S\subset \mathbb{R}^2$, we use $USC(S)$ to denote the set of functions $u:S\rightarrow \mathbb{R}\cup \{-\infty\}$, $u\neq -\infty$ in $S$, satisfying
\begin{align*}
\limsup_{x\rightarrow x_0}u(x)\leq u(x_0),\quad \forall x_0\in S.
\end{align*}

Similarly, we use $LSC(S)$ to denote the set of functions $u:S\rightarrow \mathbb{R}\cup \{+\infty\}$, $u\neq +\infty$ in $S$, satisfying
\begin{align*}
\liminf_{x\rightarrow x_0}u(x)\geq u(x_0),\quad \forall x_0\in S.
\end{align*}

\begin{defi}
	Let $\Omega$ be an open subset in $\mathbb{R}^2$, we say $u\in USC(\Omega)$ is a viscosity subsolution of 
	\begin{align}\label{viscosity solution}
	\lambda(A^u)\in \partial\Gamma,\quad in \quad \Omega
	\end{align}
	if for any point $x_0\in \Omega$, $\varphi\in C^2(\Omega)$, $(u-\varphi)(x_0)=0$, $u-\varphi\leq 0$ near $x_0$, we have
	\begin{align*}
	\lambda(A^\varphi(x_0))\in \mathbb{R}^2\setminus \Gamma.
	\end{align*}
	
	Similarly, we say $u\in LSC(\Omega)$ is a viscosity supersolution of (\ref{viscosity solution}), if for any point $x_0\in \Omega$, $\varphi\in C^2(\Omega)$, $(u-\varphi)(x_0)=0$, $u-\varphi\geq 0$ near $x_0$, we have
	\begin{align*}
	\lambda(A^\varphi(x_0))\in \bar{\Gamma}.
	\end{align*}
	
	We say $u$ is a viscosity solution of (\ref{viscosity solution}), if it is both a subsolution and a supersolution.
\end{defi}

We remark that our definition is consistent with \cite[Definition 1.1]{L09} and \cite[Definition 1.3]{LNW}.

\subsubsection{Previous results}
\ 

In this section, we state some previous results which we use in this paper. The first one is an asymptotic behaviour for viscosity supersolution.

\begin{prop}\label{Asymptotic behavior-1}(\cite{LLL2})
	Let $\Gamma=\Gamma_p$ for some $1<p\leq 2$, and let $u$ be a viscosity supersolution of (\ref{viscosity solution}) in $\mathbb{R}^2\setminus B_{{r_0}}$ for some $r_0>0$.
	Then there exists $K_0>0$, such that
	\begin{align*}
	\inf_{\partial B_r}u(r)+4\ln r\text{ is monotonically nondecreasing in $r$ for }r>K_0.
	\end{align*}
	Consequently, $\displaystyle\liminf_{x\rightarrow \infty} \left(u(x)+4\ln |x|\right)>-\infty$.
\end{prop}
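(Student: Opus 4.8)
Here is how I would approach the proof.

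\medskip
\noindent\textbf{Proof proposal.} The plan is to reduce the statement to a one--dimensional assertion about a radial supersolution and then exploit that the radial ``fundamental solution'' $U_0(x):=-4\ln|x|$ is exactly critical: a direct computation gives $A^{U_0}\equiv 0$, hence $\lambda(A^{U_0})=(0,0)\in\partial\Gamma_p$ for every $1\le p\le 2$; more generally the radial profiles $u_{\gamma,c}(x)=-\tfrac{4}{p-1}\ln\!\big(1+\gamma|x|^{p-1}\big)+c$, $\gamma>0$, solve $\lambda(A^u)\in\partial\Gamma_p$ and already satisfy that $u_{\gamma,c}(r)+4\ln r$ is strictly increasing with a finite limit, so the rate $-4\ln|x|$ is the extremal one.

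First I would pass from $u$ to $\underline u(x):=\inf_{\partial B_{|x|}}u=\inf_{R\in SO(2)}u(Rx)$. Since the equation $\lambda(A^u)\in\partial\Gamma$ depends only on $\nabla u$ and $\nabla^2 u$ (the positive factor $e^{-u}$ is irrelevant to membership in the cone) and is invariant under rotations, each $u\circ R$ is again a viscosity supersolution, and since the infimum over $SO(2)$ is attained, $\underline u\in LSC(\mathbb{R}^2\setminus B_{r_0})$ is itself a viscosity supersolution of $\lambda(A^u)\in\partial\Gamma$ there. Because $\Gamma_p\subset\Gamma_1$, it is in particular a supersolution of $\Delta u\le 0$, hence superharmonic, and comparison with radial harmonic functions on annuli shows that $\underline u$, written as $g(t)$ with $t=\ln|x|$, is concave in $t$ (in particular continuous and finite). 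The claim then becomes: $\phi(t):=g(t)+4t$ is nondecreasing for $t$ large; this is equivalent to $\inf_{\partial B_r}u+4\ln r$ being nondecreasing for $r>K_0:=e^{t_*}$, with $t_*$ any threshold beyond which the concave function $\phi$ has nonnegative slope, and it yields at once $\liminf_{x\to\infty}\big(u(x)+4\ln|x|\big)=\liminf_{t\to\infty}\phi(t)\ge\phi(t_*)>-\infty$.

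The core step is a dichotomy for $g$. Being concave, $g$ is twice differentiable for a.e.\ $t$, and at each such point the downward--perturbed second--order Taylor polynomial of $g(\ln|x|)$ is an admissible test function from below, so the supersolution condition forces $\big(P(t),Q(t)\big)\in\bar\Gamma_p$ for a.e.\ $t$, where $P=-g''+g'+\tfrac14(g')^2$ and $Q=-g'-\tfrac14(g')^2$ are, up to the positive factor $e^{-g}/r^2$, the radial and tangential eigenvalues of $A^{g(\ln|x|)}$. For $p=2$ this says $Q\ge 0$, i.e.\ $g'\in[-4,0]$ a.e., so $\phi'=g'+4\ge0$ and $\phi$ is nondecreasing. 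For $1<p<2$, eliminating $P$ from $P\ge(p-2)Q$ and $Q\ge(p-2)P$ shows that whenever $g'<-4$ one has $g''\le\tfrac{p-1}{p-2}\,g'\big(1+\tfrac14 g'\big)<0$. Now suppose $\phi$ were \emph{not} eventually nondecreasing; since $\phi$ is concave its slope decreases to some $m<0$, so $g'\le-4-\eta_0$ on $[t_1,\infty)$ for some $\eta_0>0$. Then $g''\le-\kappa<0$ a.e.\ on $[t_1,\infty)$, and combined with $g'$ being nonincreasing (whose singular part is $\le0$) this forces $g'(t)\to-\infty$; for $|g'|$ large the inequality reads $g''\le-c\,(g')^2$, a Riccati bound driving $1/g'$ upward at a fixed rate, whence $g'\to-\infty$ at a finite value of $t$ and therefore $g=\underline u\to-\infty$ there as well, contradicting the finiteness and continuity of $\underline u$ on all of $\mathbb{R}^2\setminus B_{r_0}$. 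Hence $\phi$ is eventually nondecreasing.

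The step I expect to be the main obstacle is the $1<p<2$ case: unlike $p=2$, a supersolution \emph{can} locally decay strictly faster than $-4\ln|x|$, so the bound is not a pointwise consequence of ellipticity but a genuinely global one — once such fast decay begins it self--amplifies and the radial supersolution ceases to exist before reaching infinity — and this Riccati--type argument must be run for merely concave radial supersolutions, that is, only with a.e.\ second derivatives and the monotone structure of $g'$. The reduction in the second paragraph (rotation invariance, infimum of viscosity supersolutions, and superharmonicity $\Rightarrow$ concavity in $\ln|x|$) is standard but also has to be stated with some care.
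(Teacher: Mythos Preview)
The paper does not actually prove this proposition: it is quoted from the authors' earlier paper \cite{LLL2} and used as a black box. So there is no ``paper's own proof'' to compare against here. That said, for the special case $\Gamma=\Gamma_2$ the paper later establishes essentially the same (in fact sharper: $K_0=r_0$ works) radial monotonicity in Lemma~\ref{radbehavior}, by a different device: write $w=e^{-u/2}$, observe that $\lambda(A^u)\in\bar\Gamma_2$ is equivalent to $\lambda(B_w)\in\bar\Gamma_2$ with $B_w=\nabla^2 w-\tfrac{1}{2w}|\nabla w|^2 I$, mollify $w$ (which preserves the condition by convexity of $B_w$), and read off the $C^2$ case directly. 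Your approach---pass to the radial LSC supersolution $\underline u$, use $\bar\Gamma_p\subset\bar\Gamma_1$ to get concavity of $g(t)=\underline u(e^t)$ in $t=\ln|x|$, and then argue pointwise on the eigenvalues---is a legitimate and more hands-on alternative that works uniformly in $1<p\le2$ without the mollification/convexity trick.

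Your argument is essentially correct. Two small points. First, in the $1<p<2$ endgame the phrase ``therefore $g=\underline u\to-\infty$ there as well'' is not the right contradiction: $g'$ can blow up in finite time while $g$ stays bounded (think $g(t)=-\sqrt{t^*-t}$). The actual contradiction is that a finite concave function on the open interval $(\ln r_0,\infty)$ has finite one-sided derivatives at every interior point, whereas your Riccati bound forces $g_+'(t)\to-\infty$ as $t\uparrow t^*$ with $t^*<\infty$ in the interior. Second, the Riccati step itself (``$g''\le -c(g')^2$ a.e.\ $\Rightarrow$ finite-time blow-up of $g'$'') deserves a line of justification beyond the formal $(1/g')'\ge c$, since $g'$ is only BV; one clean way is to set $I(t)=\int_{t_1}^{t}(g')^2$, note $I$ is absolutely continuous with $I'\ge (cI+|g'(t_1)|)^2$ a.e.\ (using $g_+'(t)\le g_+'(t_1)+\int_{t_1}^{t}g''\le g_+'(t_1)-cI(t)$ from concavity), and conclude $I$ blows up by $t^*\le t_1+\frac{1}{c|g'(t_1)|}$. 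With these two clarifications your proof goes through.
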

The above proposition can be equivalently stated as follows:

\begin{manualtheorem}{\ref{Asymptotic behavior-1}'}
	Let $\Gamma=\Gamma_p$ for some $1<p\leq 2$, and let $u$ be a viscosity supersolution of (\ref{viscosity solution}) in $B_{r_0}\backslash\{0\}$ for some $r_0>0$.
	Then there exists $\epsilon>0$, such that
	\begin{align*}
	\inf_{\partial B_r}u(r)\text{ is monotonically nonincreasing in $r$ for }0<r<\epsilon.
	\end{align*}
	Consequently, $\displaystyle\liminf_{x\rightarrow 0} u(x)>-\infty$.
\end{manualtheorem}

\medskip

We now state a lemma concerning comparison principle.  It is a special case of Corollary 1.9 in \cite{LNW}, see also \cite[Proposition 1.14]{L09} for a result of this type in dimensions $n\geq 3$.

\begin{lemm}\label{Comparison-2}
	Let $\Omega$ be an open subset in $\mathbb{R}^2$, $E\subset\Omega$ be a closed set with zero Newtonian capacity. Let $u\in USC(\bar{\Omega})$ be viscosity subsolution of (\ref{viscosity solution}) in $\Omega$ and $v\in LSC(\bar{\Omega}\setminus E)$ be viscosity supersolution of (\ref{viscosity solution}) in $\Omega\setminus E$. Assume further that 
	\begin{align*}
	\inf_{\Omega\setminus E}v>-\infty,
	\end{align*}
	and $u<v$ on $\partial\Omega$, then $\inf_{\Omega\setminus E} (v-u)>0$.
\end{lemm}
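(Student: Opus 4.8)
The plan is to reduce the statement to the standard comparison principle for viscosity sub/supersolutions of $\lambda(A^u)\in\partial\Gamma$ across a removable singular set, which is exactly the content of \cite[Corollary 1.9]{LNW}. First I would set up the situation precisely: we have $u$ a subsolution on all of $\Omega$, $v$ a supersolution on $\Omega\setminus E$ with $E$ closed of zero Newtonian capacity, $v$ bounded below on $\Omega\setminus E$, and $u<v$ on $\partial\Omega$. The key point, established in \cite{LNW}, is that a bounded-below supersolution defined off a set of zero capacity extends (as a supersolution, or at least behaves as one for comparison purposes) across $E$; more precisely, the comparison principle there is formulated to allow exactly such an exceptional set $E$ of zero capacity. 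So the first step is simply to verify that the hypotheses of \cite[Corollary 1.9]{LNW} are met — convexity and symmetry of $\Gamma=\Gamma_p$, $1<p\le 2$ (or the relevant structural hypotheses on the cone), $u\in USC(\bar\Omega)$, $v\in LSC(\bar\Omega\setminus E)$, $\inf_{\Omega\setminus E}v>-\infty$, and strict inequality on the boundary — and then quote it.

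The second step is to upgrade the conclusion $v\ge u$ in $\Omega\setminus E$ to the strict, quantitative statement $\inf_{\Omega\setminus E}(v-u)>0$. For this I would use the strictness of the boundary inequality: since $u<v$ on the compact set $\partial\Omega$ and both are semicontinuous there (bounded above and below respectively), there is $\delta_0>0$ with $u+\delta_0<v$ on $\partial\Omega$. Applying the (non-strict) comparison principle to the pair $u+\delta_0$ and $v$ — noting that adding a constant to a subsolution of a $0$-homogeneous-type equation $\lambda(A^u)\in\partial\Gamma$ still yields a subsolution, since $A^{u+c}=e^{-c}A^u$ and membership in $\partial\Gamma$ is scaling-invariant for a cone — gives $v\ge u+\delta_0$ on $\Omega\setminus E$, hence $\inf_{\Omega\setminus E}(v-u)\ge\delta_0>0$. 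If one wishes to avoid even the mild claim about $A^{u+c}$, one can instead apply the non-strict comparison on slightly shrunken domains $\Omega'\Subset\Omega$ where $v-u$ stays bounded below by a positive constant by boundary data and continuity, but the constant-shift argument is cleaner.

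The main obstacle, such as it is, is that the lemma is not really proved here from scratch but cited; so the substantive work is (a) confirming that the cone $\Gamma_p$, $1<p\le2$, falls under the structural hypotheses of \cite{LNW} — this is where the convexity/symmetry of $\Gamma$ and the degenerate-elliptic structure of the operator $u\mapsto\lambda(A^u)$ enter — and (b) the capacity argument that makes $E$ removable, which is the genuinely non-trivial ingredient borrowed from \cite{LNW}. I do not expect to reprove the capacity estimate; I would simply indicate that $E$ having zero Newtonian capacity is precisely the hypothesis under which \cite[Corollary 1.9]{LNW} permits the exceptional set, and refer also to \cite[Proposition 1.14]{L09} for the analogous statement in dimensions $n\ge 3$. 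The only original content to write out is therefore the short strict-to-quantitative upgrade in the second step.
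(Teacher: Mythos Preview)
Your proposal is correct and takes essentially the same approach as the paper, namely to invoke \cite[Corollary~1.9]{LNW}. The paper additionally makes explicit the substitution $w=-u$ (so that $A^u=e^{w}\bigl(\nabla^2 w+\tfrac12\nabla w\otimes\nabla w-\tfrac14|\nabla w|^2 I\bigr)$ matches the operator form assumed in \cite{LNW}) and remarks that the a~priori hypothesis $u\le v$ on $\Omega\setminus E$ appearing in that corollary can be dropped by inspecting its proof, whereas you recover the strict conclusion via your constant-shift argument $A^{u+c}=e^{-c}A^u$; both routes are fine.
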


\begin{proof}
	Let $w=-u$, we have
	\begin{align*}
	A^u=e^w\left(\nabla^2w+\frac{1}{2}\nabla w\otimes \nabla w-\frac{1}{4} |\nabla w|^2I \right).
	\end{align*}
	Then all the assumptions in Corollary 1.9 in \cite{LNW} are satisfied. Note that we do not need the assumption $u\leq v$ in $\Omega\setminus E$, this can be seen in the proof of Corollary 1.9 in \cite{LNW}.
\end{proof}

\medskip

A consequence of the comparison principle is the Lipschitz regularity for viscosity solution. For related arguments in higher dimensions, see \cite[Lemma 3.1]{LN1} and \cite[Lemma A.2]{LL2}.

\begin{lemm}\label{Lipschitz}
	Let $u\in C^0(B_3)$ be viscosity solution of (\ref{viscosity solution}) in $B_3$. Then $u\in C^{0,1}_{loc}(B_3)$. Furthermore, there exists a constant $C$ such that	
	\begin{align*}
	|\nabla u|\leq C\big(\frac{\sup_{B_{2}}e^u}{\inf_{B_{2}}e^u}\big)^{\frac{1}{4}}\quad\text{in }B_1.
	\end{align*}
\end{lemm}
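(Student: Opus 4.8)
The plan is to obtain the Lipschitz bound as an \emph{a priori} estimate via the comparison principle of Lemma \ref{Comparison-2}, by comparing $u$ with explicit radially symmetric barriers, and then to upgrade the uniform modulus-of-continuity estimate into differentiability a.e.\ (hence $C^{0,1}_{loc}$). The key observation is that the operator $\lambda(A^u)\in\partial\Gamma$ is invariant under the M\"obius group, and in particular under the conformal changes $u\mapsto u(x_0+rx)+4\ln r$ and the inversions; this lets us reduce the estimate at an arbitrary interior point, after a scaling, to a fixed configuration such as $B_2\subset B_3$ with the point at the origin. So it suffices to prove: if $u\in C^0(B_2)$ is a viscosity solution of $\lambda(A^u)\in\partial\Gamma$ in $B_2$, then $|u(x)-u(0)|\le C\big(\tfrac{\sup_{B_2}e^u}{\inf_{B_2}e^u}\big)^{1/4}$ type control on a smaller ball, with a linear-in-distance rate once we translate the base point.

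First I would record the radial solutions of the limiting equation on annuli, which are already implicit in the B\^ocher-type analysis and in Proposition \ref{Asymptotic behavior-1}: for $\Gamma=\Gamma_p$ the function $w=e^{-\frac{p-1}{4}u}$ (resp.\ $w=e^{-u/4}$ when $p=2$) transforms the equation into something for which affine functions of a suitable power of $|x|$ are solutions, so that $w$ of the form $a+b|x|^{\gamma}$ (and its inversions) are exact radial viscosity solutions of $\lambda(A^u)\in\partial\Gamma$. These will serve as the upper and lower barriers. Next, fix $x_0\in B_1$ and a direction; using the explicit radial sub/supersolutions centered appropriately and translated/inverted so that they pinch $u$ from above and below on the boundary of a small ball $B_\rho(x_0)$ with the values $u(x_0)\pm(\text{something})\rho$, apply Lemma \ref{Comparison-2} (the set $E=\{x_0\}$ has zero Newtonian capacity in $\mathbb R^2$, which is exactly why the Lemma is stated with a capacity-zero exceptional set) to conclude $u(x)$ is trapped between the two barriers in $B_\rho(x_0)$. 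Reading off the barriers' Lipschitz constant at $x_0$, and tracking how the constant depends on $\sup_{B_2}u$ and $\inf_{B_2}u$ through the normalization of $w$, yields $|\nabla u(x_0)|\le C\big(\sup_{B_2}e^u/\inf_{B_2}e^u\big)^{1/4}$ in the viscosity sense; since this holds at every $x_0\in B_1$ with a uniform constant, $u$ is Lipschitz there. Rademacher's theorem then gives differentiability a.e., and the pointwise viscosity bound on the gradient becomes the stated a.e.\ bound. Finally, rescaling back by the M\"obius change of variables above converts the estimate on $B_1$ inside $B_2$ to the estimate on $B_1$ inside $B_3$ with the stated constant.

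The main obstacle I anticipate is the construction of the barriers with the \emph{correct} dependence of the constant on $\sup e^u$ and $\inf e^u$: one must choose the radial comparison functions so that they match $u$ from one side on $\partial B_\rho(x_0)$ while keeping the ratio $\sup/\inf$ of $e^{(\text{barrier})}$ controlled, and one must verify that after the inversion used to place a barrier "from above" it is still a genuine viscosity sub/supersolution on the punctured ball (ellipticity of $\Gamma=\Gamma_p$, $1<p\le 2$, and the homogeneity are what make this work, but the bookkeeping near the singular point and at $\partial B_\rho$ is delicate). A secondary technical point is justifying that the exceptional point $x_0$ genuinely has zero Newtonian capacity and that Lemma \ref{Comparison-2} applies with $\inf v>-\infty$ guaranteed — here the lower bound $\liminf_{x\to x_0}u>-\infty$ near an isolated point follows from the supersolution property together with Proposition \ref{Asymptotic behavior-1}$'$, so this should go through smoothly once the barriers are in place.
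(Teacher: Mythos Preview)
Your barrier approach has a genuine gap when $\Gamma=\Gamma_p$ with $1<p<2$. The explicit radial solutions you propose to use --- those of the form $-\frac{4}{p-1}\ln(|x-x_0|^{p-1}+C_1)+C_2$ from Theorem~\ref{radialsol}(c) --- are only $C^{0,p-1}$ at their center, not Lipschitz: their radial derivative behaves like $|x-x_0|^{p-2}$, which blows up as $x\to x_0$ when $p<2$. Pinching $u$ between such barriers therefore yields at best a H\"older-$(p-1)$ modulus, exactly as in the proof of Proposition~\ref{removable2}, not the Lipschitz estimate claimed. The radial solutions of types (d) and (e) are singular on one side and do not help either. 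So the ``main obstacle'' you anticipated (tracking the constant) is not the real issue; the method itself falls short of Lipschitz for $p<2$, and no amount of bookkeeping with inversions fixes the exponent.

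The paper's proof takes a different route: rather than comparing $u$ with fixed radial barriers, it compares $u$ with its own M\"obius reflections
\[
u_{x,\lambda}(y)=u\Bigl(x+\frac{\lambda^2(y-x)}{|y-x|^2}\Bigr)-4\ln\frac{|y-x|}{\lambda}.
\]
By conformal invariance $u_{x,\lambda}$ is again a viscosity solution of $\lambda(A^{u_{x,\lambda}})\in\partial\Gamma$, and one chooses $R$ by $4\ln(4R)=\inf_{B_2}u-\sup_{B_2}u$ so that $u_{x,\lambda}\le u$ on $\partial B_2$ for all $|x|\le 1$ and $0<\lambda\le R$. Lemma~\ref{Comparison-2} (applied on the clean annulus $B_2\setminus\overline{B(x,\lambda)}$, no exceptional set needed since $u_{x,\lambda}=u$ on $\partial B(x,\lambda)$) then gives $u_{x,\lambda}\le u$ throughout. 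This moving-sphere inequality, valid for a whole range of $\lambda$, is converted into a pointwise gradient bound by the elementary calculus Lemma~\ref{Calculus-2} in the appendix, and the dependence $|\nabla u|\lesssim 1/R\sim\bigl(\sup_{B_2}e^u/\inf_{B_2}e^u\bigr)^{1/4}$ drops out of the definition of $R$. The key point you are missing is that the reflection $u_{x,\lambda}$ inherits $u$'s own regularity away from the center and is exactly tuned to detect a Lipschitz gradient, uniformly in $p$.
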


\begin{proof}

	For $x\in\mathbb{R}^2$, $\lambda>0$, let
	\begin{align}\label{uxlambda}
	u_{x,\lambda}(y):=u(x+\frac{\lambda^2(y-x)}{|y-x|^2})-4\ln\frac{|y-x|}{\lambda},\quad y\in \mathbb{R}^2\backslash\{x\}.
	\end{align}
	
	Let $R>0$ be given by
	\begin{align*}
	4\ln(4R)=\inf_{B_2}u-\sup_{B_2}u.
	\end{align*}
	
	By the above we have
	\begin{align*}
	u_{x,\lambda}(y)\leq \sup_{B_2}u+4\ln 4R=\inf_{B_2}u\leq u(y),\quad\text{for any }|x|\leq 1,\ 0<\lambda\leq R,\ |y|=2,
	\end{align*}
	and $u_{x,\lambda}=u$ on $\partial B(x,\lambda)$.
	
	By the conformal invariance property of $A^u$, if $\lambda(A^u)\in\partial\Gamma$, then $\lambda(A^{u_{x,\lambda}})\in \partial\Gamma$. See our previous paper \cite{LLL2} for details.
	
	Since $\lambda(A^{u_{x,\lambda}})\in \partial\Gamma$ in $B_2\setminus \overline{B(x,\lambda)}$ in the viscosity sense, by Lemma \ref{Comparison-2}, for any $0<\lambda\leq R$, $x\in \overline{B}_1$, we have
	\begin{align*}
	u_{x,\lambda}\leq u,\quad\text{in} \ B_2\setminus \overline{B(x,\lambda)}.
	\end{align*}
	By Lemma \ref{Calculus-2}, $u$ is Lipschitz continuous on $\overline{B}_1$. The lemma is now proved.
	
\end{proof}

\medskip

\subsection{Symmetry and Liouville type theorems for $f(\lambda(A^u))=0$}
\ 

Given the comparison principle and asymptotic behavior, Theorem \ref{dege1} can be proved as in \cite{L07} or \cite{L09}. For reader's convenience, we include the proof below.

\medskip

\begin{proof}
	Let $u_{x,\lambda}$ be defined by (\ref{uxlambda}). For every $x\in\mathbb{R}^2\setminus\{0\}$ and for every $0<\lambda<|x|$, we want to use comparison principle to $u_{x,\lambda}$ and $u$ in $B_\lambda(x)\setminus\{x,\frac{|x|^2-\lambda^2}{|x|^2}x\}$. We only need to check $u_{x,\lambda}$ is bounded below near $x$ and $\frac{|x|^2-\lambda^2}{|x|^2}x$.

	By the asymptotic behavior Proposition \ref{Asymptotic behavior-1},
	\begin{align*}
	\liminf_{x\rightarrow \infty} \left(u(x)+4\ln |x|\right)>-\infty,\quad \liminf_{x\rightarrow 0} (u(\frac{x}{|x|^2})-4\ln |x|)>-\infty.
	\end{align*}
	
	It follows that
	\begin{align*}
	\inf_{B_\lambda(x)\setminus\{x,\frac{|x|^2-\lambda^2}{|x|^2}x\}}u_{x,\lambda}>-\infty.
	\end{align*}
	
	By the comparison principle Lemma \ref{Comparison-2}, we have
	\begin{align}\label{ugequxlambda}
	u(y)\geq u_{x,\lambda}(y),\quad \forall\  0<\lambda<|x|,|y-x|\geq \lambda, y\neq 0.
	\end{align}
	
	For any unit vector $e\in\mathbb{R}^2$, $a>0$, $y\in\mathbb{R}^2$ satisfying $(y-ae)\cdot e<0$, and for any $R>a$, we have, by (\ref{ugequxlambda}) with $x=Re$ and $\lambda=R-a$, 
	\begin{align*}
	u(y)\geq u(x+\frac{\lambda^2(y-x)}{|y-x|^2})-4\ln \frac{|y-x|}{\lambda}.
	\end{align*}
	
	Sending $R$ to infinity, we obtain
	\begin{align*}
	u(y)\geq u(y-2(y\cdot e-a)e),\text{ for any }y\in\mathbb{R}^2\text{ satisfying }(y-ae)\cdot e<0.
	\end{align*}
	
	This gives the radial symmetry of the function $u$ and
	\begin{align*}
	u(y)=u(y_1,y_2)\geq u_a(y):=u(2a-y_1,y_2),\quad \forall\ y_1\leq a,\ a>0.
	\end{align*}
	
	Since $u=u_a$ on $y_1=a$, we have $\frac{\partial (u-u_a)}{\partial y_1}\leq 0$ at $y=(a,0)$, i.e. $u^\prime(a)\leq 0$ whenever $u$ is differentiable. Because  $u$ and $u_a$ satisfy the same equation in $y_1<a$, we have, by Hopf lemma, $\frac{\partial (u-u_a)}{\partial y_1}<0$ at $y=(a,0)$, i.e. $u^\prime(a)< 0$ whenever $u$ is differentiable. Consequently, $u^\prime(r)<0$ a.e. as $u$ is Lipschitz continuous by Lemma \ref{Lipschitz}.
\end{proof}

\medskip

\subsection{Local gradient estimate}
\ 

In this section, we establish Theorem \ref{grad2} and Theorem \ref{grad3}. The proof follows the strategy in \cite{L09}, and relies on the Liouville type theorem Theorem \ref{dege1}.

\medskip

\subsubsection{Gradient estimate assuming lower bound}
\ 

We first give the proof of gradient estimate for the equation $f(\lambda({A^u}))=K$, assuming both upper bound and lower bound of $u$. Namely, we prove the following proposition. The proof is based on Bernstein type arguments.

\begin{prop}\label{grad1}
	Let $\Gamma=\Gamma_p$ for some $1\leq p\leq 2$, $f$ satisfy (\ref{f1})-(\ref{f3}) and (\ref{f4}), and $K$ be a $C^1$ positive function in $B_r\subset\mathbb{R}^2$. For constant $-\infty<\alpha\leq \beta<\infty$, let $u\in C^3(B_{3r})$ satisfy
	\begin{align}\label{eqn2}
	f(\lambda(A^u))=K,\ \alpha\leq u\leq \beta,\ \lambda(A^u)\in \Gamma\quad\text{in }B_{3r}.
	\end{align}
	Then
	\begin{align*}
	|\nabla u|\leq C\quad in\ B_r,
	\end{align*}
	for some constant $C$ depending only on $\alpha,\beta,r$, $\|K\|_{C^1}$ and $(f,\Gamma)$.
\end{prop}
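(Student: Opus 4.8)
The plan is to prove Proposition~\ref{grad1} by a Bernstein-type maximum principle argument, in the spirit of \cite{L09}. It is convenient to substitute $w=e^{-u/2}$, under which the conformal tensor simplifies to $A^u=2w\,\nabla^2 w-|\nabla w|^2 I=:W$, so that (\ref{eqn2}) becomes $f(\lambda(W))=K$ in $B_{3r}$, while $\alpha\le u\le\beta$ says exactly that $w$ takes values in a fixed compact subinterval $[c_1,c_2]\subset(0,\infty)$; since $|\nabla u|=2|\nabla w|/w$, it suffices to bound $|\nabla w|$ on $B_r$. Two structural facts drive the proof: $\Gamma\subset\Gamma_1$ forces $\operatorname{tr}W>0$, i.e.\ $\Delta w>|\nabla w|^2/w$; and, for $1<p\le2$, the strict inclusion $\Gamma=\Gamma_p\subsetneq\Gamma_1$ gives the rigidity $\lambda_1+\lambda_2>(p-1)\max(|\lambda_1|,|\lambda_2|)$ on $\Gamma_p$, so that one large eigenvalue of $W$ forces $\Delta w$ to be correspondingly large.

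I would fix a standard cutoff $\eta\in C_c^\infty(B_{3r})$ with $\eta\equiv1$ on $B_r$, $0\le\eta\le1$ and $|\nabla\eta|+|\nabla^2\eta|\le C(r)$, and study the auxiliary function $G=\eta^2|\nabla w|^2$ (or, if convenient, its logarithm, or a variant multiplied by a positive function of $w$). Being nonnegative and compactly supported, $G$ attains an interior maximum at some $x_0$; we may assume $|\nabla w|(x_0)>0$, else $|\nabla w|\equiv0$ on $B_r$. At $x_0$ one has $\nabla G=0$ and $LG\le0$, where $L=\sum_{i,j}F^{ij}\partial_{ij}$ with $F^{ij}:=\partial f/\partial W_{ij}$ is the linearized operator, linearly elliptic since $f_{\lambda_i}>0$. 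From $\nabla G(x_0)=0$ one reads off $(\nabla^2 w)\nabla w=-\eta^{-1}|\nabla w|^2\nabla\eta$, so the eigenvalue of $\nabla^2 w$ along the nonzero vector $\nabla w$ is $O(|\nabla w|\,\eta^{-1})$. To exploit $LG(x_0)\le0$, I would differentiate the equation once in each coordinate direction, solve for the contracted third derivatives $F^{ij}\partial_k w_{ij}$ in terms of $\nabla^2 w$, $\nabla w$, $\nabla K$ and $\operatorname{tr}F=\sum_i f_{\lambda_i}$ — using Euler's relation $F^{ij}W_{ij}=f(\lambda(W))=K$ (homogeneity, (\ref{f2})) to rewrite $F^{ij}w_{ij}$ — and substitute into $LG(x_0)\le0$; after eliminating the remaining first-order quantities via $\nabla G(x_0)=0$, this yields a nonnegative ``good'' term comparable to $\eta^2 F^{ij}w_{ki}w_{kj}$, a competing negative term of order $\eta^2|\nabla w|^4 w^{-2}\operatorname{tr}F$, and lower-order errors, some of them also weighted by the unbounded factor $\operatorname{tr}F$, coming from the cutoff, from $\nabla K$, and from the quadratic-gradient part of $A^u$.

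The crux is to show that, once $\eta^2|\nabla w|^2(x_0)$ is large, the good term dominates. The bound on the eigenvalue of $\nabla^2 w$ along $\nabla w$ shows that $W$ has an eigenvalue close to $-|\nabla w|^2$; when $\Gamma=\Gamma_2$ this already contradicts $\lambda(W)\in\Gamma_2$ unless $|\nabla w|(x_0)$ is bounded. For $1<p<2$ the cone is larger, and instead $\lambda(W)\in\Gamma_p$ together with $f(\lambda(W))=K$ being bounded forces the other eigenvalue of $\nabla^2 w$ to be comparable to $|\nabla w|^2$ and $\lambda(W)$ to lie very close to $\partial\Gamma_p$; feeding this into the Bernstein inequality, and using $\operatorname{tr}F\ge\delta$ from (\ref{f4}) for the errors that carry no $\operatorname{tr}F$, the good term is seen to beat both the competing $\operatorname{tr}F$-term and the errors, forcing $\eta^2|\nabla w|^2(x_0)\le C$ and hence $|\nabla w|\le C$ on $B_r$, $|\nabla u|\le C$ on $B_r$, with $C=C(\alpha,\beta,r,\|K\|_{C^1},f,\Gamma)$. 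Since $f$ is differentiated only once, no second derivatives $f_{\lambda_i\lambda_j}$ enter, so the concavity assumption (\ref{f5}) is not needed; the degenerate case $p=1$, where the rigidity degenerates, is treated separately and more simply. I expect the main obstacle to be precisely this last balancing step: making the $\Gamma_p$-rigidity and the constraint $f(\lambda(W))=K$ quantitatively effective at the scale of the maximum point, and controlling the third-order error terms carrying the unbounded factor $\operatorname{tr}F$ against the ellipticity term, uniformly in $1\le p\le2$ and with no upper bound on the eigenvalues of $F$; closing the constants may well require inserting into $G$ an additional carefully tuned factor (a power of $w$, or an exponential of a function of $w$).
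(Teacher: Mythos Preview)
Your overall strategy---a Bernstein-type maximum principle on a cutoff times a gradient quantity---is the same as the paper's, and your substitution $w=e^{-u/2}$ is correct and pleasant (indeed $A^u=2w\nabla^2w-|\nabla w|^2 I$). Your eigenvalue observation even gives a slick, self-contained proof in the special case $\Gamma=\Gamma_2$: the first-order condition at the maximum forces $\langle W\hat\nu,\hat\nu\rangle\le C\eta^{-1}|\nabla w|-|\nabla w|^2$ for $\hat\nu=\nabla w/|\nabla w|$, and $\lambda(W)\in\Gamma_2$ then bounds $\eta|\nabla w|$ directly, with no second-order computation at all.

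The gap is exactly where you flag it: for $1\le p<2$ the cone rigidity degenerates, and the Bernstein inequality as you have set it up does \emph{not} close. Concretely, after diagonalising (in 2D, $W$ diagonal forces $\nabla^2w$ diagonal), the comparison of your ``good'' term $2\eta^2F^{ij}w_{ki}w_{kj}$ against the ``bad'' $\eta^2|\nabla w|^4w^{-2}\operatorname{tr}F$ reduces to the sign of $\sum_i f_{\lambda_i}\bigl[(\lambda_i+|\nabla w|^2)^2-2|\nabla w|^4\bigr]$, and there is no reason this is nonnegative: knowing only $\sum f_{\lambda_i}\lambda_i=K$, $\sum f_{\lambda_i}\ge\delta$, and $\lambda\in\Gamma_p$, one cannot control the individual $f_{\lambda_i}$ against the $\lambda_i$. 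Your proposed fix via ``$\lambda(W)$ must lie very close to $\partial\Gamma_p$'' does not help, because (\ref{f1})--(\ref{f4}) impose no quantitative behaviour of $f_{\lambda_i}$ near $\partial\Gamma_p$.

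The paper resolves this precisely by the mechanism you mention in your last sentence: it works with $u$ directly and takes $G=\rho\,e^{\phi(u)}|\nabla u|^2$, where $\phi(s)=\epsilon e^s$ with $\epsilon>0$ small (depending on $\alpha,\beta$). The computation then produces, at the maximum point, a term
\[
\bigl(\phi''-(\phi')^2-\tfrac12\phi'\bigr)\rho\,|\nabla u|^2\sum_i f_{\lambda_i}u_i^2
\]
which is exactly the dangerous competitor; the choice of $\phi$ makes the bracket nonnegative on $[\alpha,\beta]$, so this term is discarded, leaving a clean dominant term $\tfrac14\phi'\rho|\nabla u|^4\sum_i f_{\lambda_i}$ with $\phi'\ge c_1>0$. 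All remaining errors are of order at most $\sqrt\rho\,|\nabla u|^3\sum_i f_{\lambda_i}$, so a polynomial inequality in $\sqrt\rho\,|\nabla u|$ closes. The point is that the differential inequality $\phi''-\tfrac12\phi'-(\phi')^2\ge0$ (solvable because $u$ is two-sidedly bounded) is what replaces your cone-rigidity step and works uniformly for all $1\le p\le 2$ without ever examining the eigenvalue structure of $W$ or of $F$.
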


\begin{proof}
	For simplicity, write
	\begin{align*}
	W=A^u=e^{-u}(-\nabla^2 u+\frac{1}{2}du\otimes du-\frac{|\nabla u|^2}{4}\delta_{ij}).
	\end{align*}

	Fix some small constants $\epsilon$,$c_1>0$, depending only on $\alpha$,$\beta$ such that the function $\phi(s):=\epsilon e^s$ satisfies
	\begin{align}\label{propertyphi}
	\phi'\geq c_1,\quad \phi''-\frac{1}{2}\phi'-(\phi')^2\geq 0,\quad on\ [\alpha,\beta].
	\end{align}
	Let $\rho\geq 0$ be a smooth function taking value $1$ in $B_r$ and $0$ outside $B_{2r}$, satisfying $|\nabla \rho|^2\leq C_1\rho$, where $C_1$ depends on $\rho$ only. Consider
	\begin{align*}
	G=\rho e^{\phi(u)}|\nabla u|^2.
	\end{align*}
	Let $G(x_0)=\max_{\bar{B}_{2r}}G$ for some $x_0\in \bar{B}_{2r}$. If $x_0\in\partial B_{2r}$, then $G\equiv 0$. We only need to consider $x_0\in {B}_{2r}$. After a rotation of the axis if necessary, we may assume that $W(x_0)$ is a diagonal matrix. In the following, we use subscripts of a function to denote derivatives. For example, $G_i=\partial_{x_i}G$, $G_{ij}=\partial_{x_ix_j}G$. We also use the notation $f^i:=\frac{\partial f}{\partial \lambda_i}$.
	Applying $\partial_{x_k}$ to equation (\ref{eqn2}), we have
	\begin{align}\label{id1}
	f^iW_{ii,k}=K_k.
	\end{align}
	By calculation,
	\begin{align*}
	G_i=2\rho e^{\phi}u_{ki}u_k+\rho\phi'e^\phi|\nabla u|^2u_i+e^\phi|\nabla u|^2\rho_i=2\rho e^\phi u_{ki}u_k+(\phi'u_i+\frac{\rho_i}{\rho})G.
	\end{align*}
	At $x_0$, we have $G_i=0$, i.e.
	\begin{align}\label{id2}
	2u_{ki}u_k=-\phi'|\nabla u|^2u_i-\frac{\rho_i}{\rho}|\nabla u|^2,\ i=1,2.
	\end{align}
	Notice that $W_{ii}=e^{-u}(-u_{ii}+\frac{1}{2}u_i^2-\frac{1}{4}|\nabla u|^2)$, so we have
	\begin{align}\label{uii}
	u_{ii}=-e^uW_{ii}+\frac{1}{2}u_i^2-\frac{1}{4}|\nabla u|^2.
	\end{align}
	Take second derivatives of $G$ and evaluate at $x_0$,
	\begin{align*}
	0\geq(G_{ij})=&2u_{kij}u_ke^\phi\rho+2u_{ki}u_{kj}e^\phi\rho+2u_{ki}u_ke^\phi\phi'u_j\rho+2u_{ki}u_ke^\phi\rho_j
	\\&+(\phi''u_iu_j+\phi'u_{ij}+\frac{\rho\rho_{ij}-\rho_i\rho_j}{\rho^2})\rho e^\phi|\nabla u|^2+(\phi'u_i+\frac{\rho_i}{\rho})G_j.
	\end{align*}
	The last term above vanishes since $G_j=0$. Therefore, at $x_0$, using formula (\ref{id1}),(\ref{id2}) and (\ref{uii}), we obtain
	\begin{align*}
	0\geq& e^{-\phi}f^iG_{ii}\\
	=&2\rho f^iu_{iik}u_k+2\rho f^i u_{ki}^2+2\rho\phi'f^iu_{ki}u_ku_i+2f^iu_{ki}u_k\rho_i+f^i(\phi''u_i^2+\phi'u_{ii}+\frac{\rho\rho_{ii}-\rho_i^2}{\rho^2})\rho|\nabla u|^2
	\\
	=&2\rho f^iu_k\bigg(-e^uW_{ii}+\frac{1}{2}u_i^2-\frac{1}{4}|\nabla u|^2\bigg)_k+2\rho f^i u_{ki}^2-\rho\phi'f^i\bigg(\phi'|\nabla u|^2u_i^2+\frac{\rho_iu_i}{\rho}|\nabla u|^2\bigg)\\
	&-f^i\rho_i\bigg(\phi'|\nabla u|^2u_i+\frac{\rho_i}{\rho}|\nabla u|^2\bigg)+\rho\phi''|\nabla u|^2f^iu_i^2+\rho\phi'|\nabla u|^2f^i\bigg(-e^uW_{ii}+\frac{1}{2}u_i^2-\frac{1}{4}|\nabla u|^2\bigg)\\
	&+f^i|\nabla u|^2\frac{\rho\rho_{ii}-\rho_i^2}{\rho}\\
	=&2\rho f^i\bigg\lbrace-e^{u}W_{ii,k}u_k-e^{u}|\nabla u|^2W_{ii}-\frac{u_i^2}{2}\phi'|\nabla u|^2-\frac{u_i\rho_i}{2\rho}|\nabla u|^2+\frac{1}{4}\phi'|\nabla u|^4+\sum_j\frac{u_j\rho_j}{4\rho}|\nabla u|^2\bigg\rbrace+2\rho f^i u_{ki}^2\\
	&-\rho\phi'f^i\bigg(\phi'|\nabla u|^2u_i^2+\frac{\rho_iu_i}{\rho}|\nabla u|^2\bigg)-f^i\rho_i\bigg(\phi'|\nabla u|^2u_i+\frac{\rho_i}{\rho}|\nabla u|^2\bigg)\\
	&+\rho\phi''|\nabla u|^2f^iu_i^2+\rho\phi'|\nabla u|^2f^i\bigg(-e^uW_{ii}+\frac{1}{2}u_i^2-\frac{1}{4}|\nabla u|^2\bigg)+f^i|\nabla u|^2\frac{\rho\rho_{ii}-\rho_i^2}{\rho}\\
	=&\bigg\lbrace-2\rho e^uK_ku_k-2\rho e^{u}|\nabla u|^2f- f^iu_i\rho_i|\nabla u|^2+\sum_j\frac{u_j\rho_j}{2}|\nabla u|^2\sum_i f^i\\
	&-2\phi'|\nabla u|^2 f^i\rho_iu_i-\rho e^{u}\phi'|\nabla u|^2f+|\nabla u|^2f^i\frac{\rho\rho_{ii}-2\rho_i^2}{\rho}\bigg\rbrace+2\rho f^i u_{ki}^2\\
	&+\rho\phi'\sum_i f^i\frac{|\nabla u|^4}{4}+(\phi''-(\phi')^2-\frac{1}{2}\phi')\rho f^i|\nabla u|^2u_i^2.
	\end{align*}
	
	In the following, we use $C_2$ to denote some positive constant depending only on $\alpha,\beta,r,K$, and $(f,\Gamma)$ that may vary from line to line. Using (\ref{f3}), (\ref{f4}) and (\ref{propertyphi}), we obtain
	\begin{align*}
	0\geq& e^{-\phi}f^iG_{ii}\\
	\geq& -C_2|\nabla u|\sum_i f^i-C_2|\nabla u|^2\sum_i f^i-C_2\sqrt{\rho}|\nabla u|^3\sum_i f^i+\rho\phi'\sum_i f^i\frac{|\nabla u|^4}{4}\\
	&+(\phi''-(\phi')^2-\frac{1}{2}\phi')\rho f^i|\nabla u|^2u_i^2\\
	\geq& -C_2|\nabla u|\sum_i f^i-C_2|\nabla u|^2\sum_i f^i-C_2\sqrt{\rho}|\nabla u|^3\sum_i f^i+\frac{c_1}{4}\rho\sum_i f^i |\nabla u|^4.
	\end{align*}
	Multiply by $\sqrt{\rho}$, then
	\begin{align*}
	0&\geq -C_2\sqrt{\rho}|\nabla u|\sum_i f^i-C_2|\nabla u|^2\sqrt{\rho}\sum_i f^i-C_2\rho|\nabla u|^3\sum_i f^i+\frac{c_1}{4}\rho^{3/2}\sum_i f^i |\nabla u|^4\\
	&\geq -C_2|\nabla u|\sum_i f^i-C_2|\nabla u|^2\sqrt{\rho}\sum_i f^i-C_2\rho|\nabla u|^3\sum_i f^i+\frac{c_1}{4}\rho^{3/2}\sum_i f^i |\nabla u|^4	\\
	&=|\nabla u|(-C_2-C_2\sqrt{\rho}|\nabla u|-C_2(\sqrt{\rho}|\nabla u|)^2+\frac{c_1}{4}(\sqrt{\rho}|\nabla u|)^3)\sum_i f^i.
	\end{align*}
	Therefore,
	\begin{align*}
	-C_2-C_2\sqrt{\rho}|\nabla u|-C_2(\sqrt{\rho}|\nabla u|)^2+\frac{c_1}{4}(\sqrt{\rho}|\nabla u|)^3\leq 0.
	\end{align*}
	which implies $\rho|\nabla u|^2(x_0)\leq C_2$, so is $G(x_0)$. Since $G(x_0)$ is maximum, $|\nabla u|\leq C_2$ on $B_1$.
\end{proof}

\subsubsection{Gradient estimate for $f(\lambda(A^u))=K$}
\ 

In this section, we prove Theorem \ref{grad2}. Now we no longer assume a lower bound for $u$.

We need to introduce some notations. Let $v$ be a locally Lipschitz function in some open subset $\Omega$ of $\mathbb{R}^2$. For $0<\alpha<1$, $x\in\Omega$, and $0<\delta<dist(x,\partial\Omega)$, let
\begin{align*}
	[v]_{\alpha,\delta}(x):=\sup_{0<|y-x|<\delta}\frac{|v(y)-v(x)|}{|y-x|^\alpha},
\end{align*}
$$ \delta(v,x;\Omega,\alpha):=\left\{
\begin{array}{rcl}
\infty       &      & if\ [v]_{\alpha,dist(x,\partial\Omega)}<1,\\
\mu     &      & where\ 0<\mu\leq dist(x,\partial\Omega)\ and\ \mu^\alpha[v]_{\alpha,\mu}(x)=1\ \\
& &if\ [v]_{\alpha,dist(x,\partial\Omega)}\geq 1
\end{array} \right. $$

Now we prove Theorem \ref{grad2}:

\begin{lemm}\label{holder1}
	Under the assumption of Theorem \ref{grad2}, we have the H\"older estimates:
	\begin{align*}
	\sup_{|y|,|x|<r,|y-x|<2r}\frac{|u(y)-u(x)|}{|y-x|^\alpha}\leq C(\alpha)\quad \forall\ 0<\alpha<1
	\end{align*}
\end{lemm}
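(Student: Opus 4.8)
The plan is to argue by contradiction using the rescaling and point-selection scheme of \cite{L09}; the decisive feature is that, since only an upper bound for $u$ is available, rescaling about a hypothetical concentration point damps the right-hand side of the equation to zero, so that the blow-up limit solves the \emph{degenerate} equation $\lambda(A^v)\in\partial\Gamma$ on all of $\mathbb R^2$, to which the Liouville theorem Corollary \ref{corliou} applies. After the evident reduction we may take $u\in C^3(B_3)$ with $\sup_{B_3}u\le M$ and $\|K\|_{C^1(B_3)}\le\Lambda$, and prove the H\"older bound on $B_1$. If it fails for some fixed $\alpha\in(0,1)$, then there exist $C^1$ positive functions $K_k$ with $\|K_k\|_{C^1(B_3)}\le\Lambda$ and $C^3$ solutions $u_k$ of $f(\lambda(A^{u_k}))=K_k$, $\lambda(A^{u_k})\in\Gamma$, with $\sup_{B_3}u_k\le M$, whose H\"older seminorms on $B_1$ blow up. In terms of the function $\delta(u_k,\cdot;\Omega,\alpha)$ introduced above, this forces $\delta_k:=\inf\delta(u_k,\cdot;\Omega,\alpha)\to0$ over the relevant compact set, and I would choose near-minimizers $x_k\to\bar x$ with $\tilde\delta_k:=\delta(u_k,x_k;\Omega,\alpha)\in[\delta_k,2\delta_k]$, so that $\tilde\delta_k\to0$ and, by the near-minimality, $\delta(u_k,\cdot;\Omega,\alpha)\ge\delta_k$ on a fixed-size neighbourhood of $x_k$.

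Next I would rescale, setting $v_k(z):=u_k(x_k+\tilde\delta_k z)-u_k(x_k)$ on balls $B_{\rho_k}(0)$ with $\rho_k\to\infty$. The defining property of $\tilde\delta_k$ gives $v_k(0)=0$ and $[v_k]_{\alpha,1}(0)=1$, while the near-minimality of $x_k$ yields a uniform bound $[v_k]_{C^{0,\alpha}(B_R)}\le C(R)$ for every fixed $R$; hence, along a subsequence, $v_k\to v_\infty$ in $C^0_{loc}(\mathbb R^2)$ with $v_\infty\in C^{0,\alpha}_{loc}(\mathbb R^2)$, $v_\infty(0)=0$, and $v_\infty$ nonconstant. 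By the conformal covariance of $A^u$ under dilations together with the homogeneity (\ref{f2}) of $f$, $v_k$ solves
\begin{align*}
f(\lambda(A^{v_k}))=\tilde K_k,\qquad \lambda(A^{v_k})\in\Gamma,\qquad \tilde K_k(z):=\tilde\delta_k^2\,e^{u_k(x_k)}\,K_k(x_k+\tilde\delta_k z),
\end{align*}
and since $\tilde\delta_k\to0$, $u_k(x_k)\le M$ and $\|K_k\|_{C^1}\le\Lambda$, we get $\|\tilde K_k\|_{C^1}\to0$. Passing to the limit in the viscosity sense — the supersolution property of $v_\infty$ being immediate from $\lambda(A^{v_k})\in\Gamma$, and the subsolution property following from the standard test-function perturbation argument using $f>0$ in $\Gamma$ together with $\|\tilde K_k\|_{\infty}\to0$ — shows that $v_\infty$ is a continuous viscosity solution of $\lambda(A^{v_\infty})\in\partial\Gamma$ in $\mathbb R^2$. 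By Corollary \ref{corliou}, $v_\infty$ must be constant, which is the desired contradiction. Since the limiting equation, and hence the conclusion that $v_\infty$ is constant, does not depend on $\alpha$, this yields the estimate for every $\alpha\in(0,1)$.

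The main obstacle will be the point-selection step: one must arrange that the blow-up limit $v_\infty$ is genuinely nonconstant, i.e., that the centre $x_k$ does not escape to the boundary of the domain and that the scale on which the H\"older quotient is normalized to $1$ does not collapse relative to $\tilde\delta_k$ in the limit; this relies on the monotonicity of $\mu\mapsto[v_k]_{\alpha,\mu}(\cdot)$ and the minimality of $\delta_k$, and the precise bookkeeping is as in \cite{L09}. The conceptual point, by contrast, is elementary but essential: the absence of a lower bound on $u$ is precisely what forces $\tilde\delta_k^2 e^{u_k(x_k)}\to0$, hence the blow-up profile to solve the \emph{degenerate} equation rather than $\sigma_2(\lambda(A^v))=1$ — which is why the relevant rigidity result here is Corollary \ref{corliou} (equivalently Theorem \ref{dege1}), not a classification of bubbles.
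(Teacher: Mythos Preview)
Your overall architecture is exactly that of the paper: contradiction, point selection via the function $\delta(u,\cdot;\Omega,\alpha)$, rescaling $v_k(z)=u_k(x_k+\tilde\delta_k z)-u_k(x_k)$, and a blow-up limit $v_\infty$ solving the degenerate equation $\lambda(A^{v_\infty})\in\partial\Gamma$ on $\mathbb R^2$, to which Corollary~\ref{corliou} applies. The identification $\tilde K_k=\tilde\delta_k^2 e^{u_k(x_k)}K_k(\cdots)\to0$ is correct and is precisely the mechanism the paper uses.

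There is, however, one genuine gap: your route to ``$v_\infty$ nonconstant'' does not work as stated. From $[v_k]_{C^{0,\alpha}(B_R)}\le C(R)$ you only get $v_k\to v_\infty$ in $C^{0,\beta}_{loc}$ for $\beta<\alpha$; the $\alpha$-seminorm is merely lower semicontinuous under such convergence, so $[v_k]_{\alpha,1}(0)=1$ does \emph{not} pass to the limit, and the extremizing $y_k$ for the H\"older quotient at $0$ could drift to $0$. You flag nonconstancy as the obstacle in your last paragraph, but you attribute it to point-selection bookkeeping and monotonicity of $\mu\mapsto[v_k]_{\alpha,\mu}$; those alone do not close the argument.

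The fix --- and this is what the paper actually does --- is to observe that the rescaled $v_k$ now has a \emph{two-sided} $C^0$ bound on every $B_R$ (from $v_k(0)=0$ together with the H\"older control), and still solves $f(\lambda(A^{v_k}))=\tilde K_k$ with $\|\tilde K_k\|_{C^1}$ bounded. Hence Proposition~\ref{grad1} (the Bernstein-type gradient estimate under two-sided bounds) applies to $v_k$ and gives $|\nabla v_k|\le C(R)$ on $B_R$. This upgrades the compactness to $C^{0,\gamma}_{loc}$ for every $\gamma<1$, in particular for some $\gamma>\alpha$, so that $[v_\infty]_{\alpha,1}(0)=\lim_k[v_k]_{\alpha,1}(0)=1$ and $v_\infty$ is nonconstant. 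With this step inserted your proof is complete and coincides with the paper's.
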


\begin{proof}
	Suppose the contrary, then for some $0<\alpha<1$, there exists, in $B_2$, $C^3$ functions $\{u_i\}$, $C^1$ functions $\{K_i\}$ satisfying, for some $\bar{a}>0$,
	\begin{align*}
	\| K_i\|_{C^1(B_2)}&\leq \bar{a}, \quad u_i\leq \bar a\ on\ B_2,\\
	f(\lambda(A^{u_i}))&=K_i,\quad \lambda(A^{u_i})\in\Gamma\quad in\ B_2,
	\end{align*}
	but
	\begin{align*}
	\inf_{x\in B_{1/2}}\delta(u_i,x)\rightarrow 0.
	\end{align*}
	where
	\begin{align*}
	\delta(u_i,x):=\delta(u_i,x;B_2,\alpha)
	\end{align*}
	It follows, for some $x_i\in B_1$,
	\begin{align*}
	\frac{1-|x_i|}{\delta(u_i,x_i)}=\max_{|x|\leq 1}\frac{1-|x|}{\delta(u_i,x)}\rightarrow\infty.
	\end{align*}
	Let
	\begin{align*}
	\sigma_i:=\frac{1-|x_i|}{2},\quad \epsilon_i:=\delta( u_i,x_i).
	\end{align*}
	Then
	\begin{align*}
	\frac{\sigma_i}{\epsilon_i}\rightarrow\infty,\quad\epsilon_i\rightarrow 0.
	\end{align*}
	and
	\begin{align*}
	\epsilon_i\leq 2\delta(u_i,z)\quad\forall\ |z-x_i|<\sigma_i.
	\end{align*}
	Let
	\begin{align}\label{formu2}
	v_i(y):=u_i(x_i+\epsilon_i y)-u_i(x_i),\quad |y|\leq\frac{\sigma_i}{\epsilon_i}.
	\end{align}
	By the definition of $\delta(u_i,x_i)$,
	\begin{align}
	[v_i]_{\alpha,1}(0)=\delta(u_i,x_i)^\alpha[u_i]_{\alpha,\delta(u_i,x_i)}(x_i)=1.\label{formu3}
	\end{align}
	For any $\beta>1$ and $|x|<\beta$, we have that for large $i$,
	\begin{align*}
	|u_i(z)-u_i(x_i+\epsilon_ix)|\leq|u_i(z)-u_i(\frac{1}{2}(z+x_i+\epsilon_ix))|+|u_i(\frac{1}{2}(z+x_i+\epsilon_ix))-u_i(x_i+\epsilon_ix)|,
	\end{align*}
	\begin{align*}
	|z-(x_i+\epsilon_ix)|=2|z-\frac{1}{2}(z+x_i+\epsilon_ix)|=2|\frac{1}{2}(z+x_i+\epsilon_ix)-(x_i+\epsilon_ix)|,
	\end{align*}
	\begin{align*}
	[v_i]_{\alpha,1}(x)&=\epsilon_i^\alpha[u_i]_{\alpha,\epsilon_i}(x_i+\epsilon_ix)\\
	&\leq 2^{-\alpha}\epsilon_i^\alpha(\sup_{|z-(x_i+\epsilon_ix)|<\epsilon_i}[u_i]_{\alpha,\frac{\epsilon_i}{2}}(z)+[u_i]_{\alpha,\frac{\epsilon_i}{2}}(x_i+\epsilon_ix))\\
	&\leq C(\beta)(\sup_{|z-(x_i+\epsilon_ix)|<\epsilon_i}\delta(u_i,z)^\alpha[u_i]_{\alpha,\delta(u_i,z)}(z)+\delta(u_i,x_i+\epsilon_ix)^\alpha[u_i]_{\alpha,\delta(u_i,x_i+\epsilon_ix)}(x_i+\epsilon_ix))\\
	&\leq C(\beta).
	\end{align*}
	This implies for any $\beta>1$,
	\begin{align}\label{formu4}
	-C(\beta)\leq v_i(y)\leq C(\beta),\quad \forall\ |y|\leq \beta.
	\end{align}
	 By Proposition \ref{grad1}, we have, for any $\beta>1$,
	\begin{align*}
	|\nabla v_i(y)|\leq C(\beta),\quad \forall\ |y|<\beta.
	\end{align*}
	Passing to a subsequence,
	\begin{align*}
	v_i\rightarrow v\quad in\ C_{loc}^\gamma(\mathbb{R}^2)\text{ for all }\alpha<\gamma<1,
	\end{align*}
	where $v$ is a function in $C_{loc}^{0,1}(\mathbb{R}^2)$ satisfying $[v]_{\alpha,1}(0)=1$. In particular, $v$ cannot be a constant.
	
	Clearly, for $\gamma_i:=e^{-u_i(x_i)}\epsilon_i^{-2}\rightarrow\infty$, and $x=x_i+\epsilon_iy$,
	\begin{align*}
	f(\gamma_i\lambda(A^{v_i(y)}))=f(\lambda(A^{u_i(x)}))=K_i,\quad |y|<\frac{\sigma_i}{\epsilon_i}.
	\end{align*}
	Thus,
	\begin{align*}
	\lim_{i\rightarrow\infty}f(\lambda(A^{v_i(y)}))=\lim_{i\rightarrow\infty}\gamma_i^{-1}K_i=0.
	\end{align*}
	
	By standard arguments, see e.g. \cite[Theorem 1.10]{L09}, $v$ is a locally Lipschitz viscosity solution of $\lambda(A^v)\in \partial\Gamma$ in $\mathbb{R}^2$.
	
	By Theorem \ref{dege1}, $v$ is a constant. This leads to a contradiction to $[v]_{\alpha,1}(0)=1$. The H\"older estimate is established.	
\end{proof}
Based on the H\"older estimates, we establish Theorem \ref{grad2}:
\begin{proof}
	The H\"older estimate yields the Harnack inequality
	\begin{align*}
	\sup_{B_{2r}}u\leq C+\inf_{B_{2r}}u
	\end{align*}
	Consider $w:=u-u(0)$, the equation of $w$ on $B_{3r}$ is
	\begin{align*}
	f(\lambda(A^w))=e^{u(0)}K,\quad\lambda(A^w)\in\Gamma
	\end{align*}
	and $w$ satisfies
	\begin{align*}
	-C\leq w\leq C\quad \text{in}\ B_{2r}.
	\end{align*}
	Since $u(0)$ is bounded from above, using Proposition \ref{grad1}, we have
	\begin{align*}
	|\nabla u|\leq C\quad \text{in}\ B_r.
	\end{align*}
	Theorem \ref{grad2} is established.
\end{proof}

\subsubsection{Gradient estimate for $f(\lambda(A^u))=0$}
\ 

Now we prove Theorem \ref{grad3}. The proof is similar to the proof of Theorem \ref{grad2}. We will need Theorem \ref{dege1} and the comparison principle Lemma \ref{Comparison-2} to finish the proof.

In the following, for simplicity, write $\delta(v,x,\alpha)=\delta(v,x;B_1,\alpha)$.

Now we give the proof of Theorem \ref{grad3}.

\begin{proof}

Since the equation $\lambda(A^u)\in\partial\Gamma$ is invariant under scaling, it suffices to consider $\epsilon=15/16$. We first claim that
\begin{align*}
\sup_{x\neq y\in B_{1/8}}\frac{|u(x)-u(y)|}{|x-y|^\alpha}\leq C(\Gamma,\alpha)\ for\ any\ 0<\alpha<1
\end{align*}
Assume otherwise the above fails. Then for some $0<\alpha<1$, we can find a sequence of positive $C^{0,1}$ functions $u_i$ in $B_1$ such that $\lambda(A^{u_i})$ but
\begin{align*}
\sup_{x\neq y\in B_{1/8}}\frac{|u_i(x)-u_i(y)|}{|x-y|^\alpha}\rightarrow\infty.
\end{align*}
It follows that for some $x_i\in B_{3/4}$,
\begin{align*}
\frac{3/4-|x_i|}{\delta(u_i,x_i,\alpha)}>\frac{1}{2}\sup_{x\in B_{3/4}}\frac{3/4-|x|}{\delta(u_i,x,\alpha)}\rightarrow\infty.
\end{align*}
Let $\sigma_i=\frac{3/4-|x_i|}{2}$ and $\epsilon_i=\delta(u_i,x_i,\alpha)$. Then
\begin{align*}
\frac{\sigma_i}{\epsilon_i}\rightarrow\infty,\ \epsilon_i\rightarrow 0,\text{and} \ \epsilon_i\leq 4\delta(u_i,z,\alpha)\ \text{for any}\ |z-x_i|\leq\sigma_i.
\end{align*}
We now define $v_i(y)$ as in (\ref{formu2}), then (\ref{formu3}) holds.

For any fixed $\beta>1$ and $|y|<\beta$, there holds, for sufficiently large $i$,
\begin{align*}
[v_i]_{\alpha,1}(y)\leq 4.
\end{align*}
Since $v_i(0)=0$ by definition, we deduce from the above that (\ref{formu4}) holds.

Now we can apply Lemma \ref{Lipschitz} to obtain
\begin{align*}
|\nabla v_i|\leq C(\beta)\quad\text{in }B_{\beta/2}\text{ for all sufficiently large i}.
\end{align*}
Passing to a subsequence, we see that $v_i$ converges in $C^{0,\alpha'}\ (\alpha<\alpha'<1)$ on compact subsets of $\mathbb{R}^2$ to some locally Lipschitz function $v_*$ which satisfies $\lambda(A^{v_*})\in\partial\Gamma$ in the viscosity sense. By Theorem \ref{dege1},
\begin{align*}
v_*\equiv v_*(0)=\lim_{i\rightarrow\infty}v_i(0)=1.
\end{align*}
This contradicts that $[v_i]_{\alpha,1}(0)=1$, in view of convergence of $v_i$ to $v_*$. So we have proved the claim.

Because of the claim, we can find some universal constant $C>0$ such that
\begin{align*}
u(0)-C\leq u\leq u(0)+C\text{ in } B_{1/8}
\end{align*}
Apply Lemma \ref{Lipschitz} again, so we obtain the required gradient estimate in $B_{1/16}$.
\end{proof}

\medskip
\subsection{Local $C^2$ estimate}
\

We prove Theorem \ref{C2} in this part, using Bernstein type arguments.

\begin{proof}
	For convenience, we write $u=-v$. Define $W=\nabla^2 v+\frac{1}{2}dv\otimes dv-\frac{1}{4}|\nabla v|^2\delta_{ij}$. So equation (\ref{feqnR2}) becomes
	\begin{align*}
	f(W)=Ke^{-v}.
	\end{align*}
	It suffices to show $\Delta v$ is bounded. Since $\lambda(A^u)\in\Gamma\subset\Gamma_1$, we know $0<tr(W)=\Delta v$.
	
	Without loss of generality, we may assume $r=1$. Let $Q=\eta(\Delta v+\frac{1}{2}|\nabla v|^2)=\eta H$, where $\eta$ is a cut-off function satisfying
	\begin{align*}
	&0\leq\eta\leq 1,\\
	&\eta=1\text{ in }B_{1/2}\text{ and }\eta=0\text{ outside }B_1,\\
	&|\nabla \eta|< C\sqrt{\eta},\\
	&|\nabla^2\eta|<C.
	\end{align*}
	Now we need to get the upper bound for $H$.
	
	Suppose $x_0$ is the maximal point of $Q$. At $x_0$, we have
	\begin{align}\label{H1}
	0=Q_i=\eta_iH+\eta H_i=\eta_i(\Delta v+\frac{1}{2}|\nabla v|^2)+\eta(v_{kki}+v_kv_{ki}),
	\end{align}
	and
	\begin{align*}
	Q_{ij}=\eta_{ij}H+\eta_iH_j+\eta_jH_i+\eta H_{ij}=(\eta_{ij}-2\frac{\eta_i\eta_j}{\eta})H+\eta H_{ij}.
	\end{align*}
	Here we have used (\ref{H1}). We know $Q_{ij}$ is negative semidefinite.
	\begin{align}\label{Hij}
	H_{ij}=v_{kkij}+v_{ki}v_{kj}+v_{k}v_{kij}.
	\end{align}
	Now by the condition $f_{\lambda_i}>0$, $f^{ij}=\frac{\partial f}{\partial W_{ij}}$ is positive definite. So use the condition on $\eta$, we have
	\begin{align}\label{H2}
	0\geq f^{ij}Q_{ij}=f^{ij}((\eta_{ij}-2\frac{\eta_i\eta_j}{\eta})H+\eta H_{ij})\geq  -C\sum_i f^{ii}H+\eta f^{ij}H_{ij}.
	\end{align}
	Using (\ref{Hij}), we obtain
	\begin{align*}
	f^{ij}H_{ij}&=f^{ij}(v_{kkij}+v_{ki}v_{kj}+v_{k}v_{kij})=I+II,
	\end{align*}
	where $I=f^{ij}v_{ijkk}$ and $II=f^{ij}(v_{ki}v_{kj}+v_{k}v_{ijk})$.
	
	To compute I, notice that
	\begin{align*}
	W_{ij,kk}=v_{ijkk}+\frac{1}{2}(v_{ikk}v_j+2v_{ik}v_{jk}+v_iv_{jkk})-\frac{1}{2}(|\nabla^2 v|^2+v_lv_{lkk})\delta_{ij}.
	\end{align*}
	Then
	\begin{align*}
	I&=f^{ij}(W_{ij,kk}-\frac{1}{2}(v_{ikk}v_j+2v_{ik}v_{jk}+v_iv_{jkk})+\frac{1}{2}(|\nabla^2 v|^2+v_lv_{lkk})\delta_{ij})\\
	&= f^{ij}W_{ij,kk}+f^{ij}(-(v_{ikk}v_j+v_{ik}v_{jk})+\frac{1}{2}(|\nabla^2 v|^2+v_lv_{lkk})\delta_{ij}).
	\end{align*}
	Now use (\ref{H1}) to replace $v_{ikk}$ and $v_{lkk}$,
	\begin{align*}
	I&= f^{ij}W_{ij,kk}+f^{ij}((\frac{\eta_i}{\eta}Q+v_kv_{ki})v_j-v_{ik}v_{jk}+\frac{1}{2}|\nabla^2 v|^2\delta_{ij}-\frac{1}{2}v_l(\frac{\eta_l}{\eta}Q+v_kv_{kl})\delta_{ij})\\
	&=f^{ij}W_{ij,kk}+f^{ij}(v_kv_{ki}v_j-v_{ik}v_{jk}+\frac{1}{2}|\nabla^2 v|^2\delta_{ij}-\frac{1}{2}v_lv_kv_{kl}\delta_{ij})+f^{ij}Q(\frac{\eta_i}{\eta}v_j-\frac{1}{2}v_l\frac{\eta_l}{\eta}\delta_{ij}).
	\end{align*}
	Using the condition on $\eta$, we obtain
	\begin{align*}
	I\geq f^{ij}W_{ij,kk}+f^{ij}(v_kv_{ki}v_j-v_{ik}v_{jk}+\frac{1}{2}|\nabla^2 v|^2\delta_{ij}-\frac{1}{2}v_lv_kv_{kl}\delta_{ij})-C\sum_i f^{ii}H\eta^{-1/2}.
	\end{align*}
	For II, we use the formula
	\begin{align*}
	W_{ij,k}=v_{ijk}+\frac{1}{2}v_{ik}v_j+\frac{1}{2}v_{jk}v_i-\frac{1}{2}v_lv_{lk}\delta_{ij}
	\end{align*}
	to replace $v_{ijk}$, then we obtain
	\begin{align*}
	II&=f^{ij}(v_{ki}v_{kj}+v_{k}v_{kij})\\
	&= v_kf^{ij}W_{ij,k}+f^{ij}(v_{ki}v_{kj}-v_kv_{ik}v_j+\frac{1}{2}v_kv_lv_{lk}\delta_{ij}).
	\end{align*}
	Combine I and II, then
	\begin{align*}
	f^{ij}H_{ij}\geq &f^{ij}W_{ij,kk}+v_kf^{ij}W_{ij,k}+f^{ij}(v_kv_{ki}v_j-v_{ik}v_{jk}+\frac{1}{2}|\nabla^2 v|^2\delta_{ij}-\frac{1}{2}v_lv_kv_{kl}\delta_{ij})\\&+f^{ij}(v_{ki}v_{kj}-v_kv_{ik}v_j+\frac{1}{2}v_kv_lv_{lk}\delta_{ij})-C\sum_i f^{ii}H\eta^{-1/2}\\
	=&f^{ij}W_{ij,kk}+v_kf^{ij}W_{ij,k}+\frac{1}{2}f^{ij}|\nabla^2 v|^2\delta_{ij}-C\sum_i f^{ii}H\eta^{-1/2}.
	\end{align*}
	Now multiply by $\eta$ on (\ref{H2}). In the following, we use $C_2$ to denote some positive constant depending only on $C$ and $|\nabla u|$, that may vary from line to line.
	\begin{align*}
	0&\geq -C\eta\sum_i f^{ii}H+\eta^2 f^{ij}H_{ij}\\
	&\geq  \eta^2f^{ij}W_{ij,kk}+\eta^2v_kf^{ij}W_{ij,k}+\frac{1}{2}\eta^2\sum_i f^{ii}|\nabla^2 v|^2-C\sum_i f^{ii}(H\eta^{3/2}+\eta H)\\
	&\geq \eta^2f^{ij}W_{ij,kk}+\eta^2v_kf^{ij}W_{ij,k}+\frac{1}{2}\eta^2\sum_i f^{ii}|\nabla^2 v|^2-C_2\sum_i f^{ii}(1+\eta|\nabla^2 v|).
	\end{align*}
	By concavity of $f$, $(Ke^{-v})_{kk}=f_{kk}\leq f^{ij}W_{ij,kk}$. Use the property that $\sum f_{\lambda_i}\geq \delta$, we obtain
	\begin{align*}
	0&\geq  \eta^2(Ke^{-v})_{kk}+\eta^2v_k(Ke^{-v})_k+\frac{1}{2}\eta^2\sum_i f^{ii}|\nabla^2 v|^2-C_2\sum_i f^{ii}(1+\eta|\nabla^2 v|)\\
	&\geq \frac{1}{2}\eta^2\sum_i f^{ii}|\nabla^2 v|^2-C_2\sum_i f^{ii}(1+\eta|\nabla^2 v|).
	\end{align*}
	So
	\begin{align*}
	0\geq\eta^2|\nabla^2 v|^2-C_2(1+\eta|\nabla^2 v|).
	\end{align*}
	This implies $\eta|\nabla^2 v|(x_0)\leq C_2$, so $\max_{B_1}Q(x)=Q(x_0)=\eta (\Delta v+\frac{1}{2}|\nabla v|^2)(x_0)\leq C_2$. Therefore, $\Delta v\leq C_2$ in $B_{1/2}$. Using gradient estimate Theorem \ref{grad2}, we reach the conclusion.
\end{proof}

\medskip

\section{B\^ocher type theorems}\label{bochertype}

In this section, we prove B\^ocher type theorems Theorem \ref{Bocher1} and \ref{Bocher2}. Our proof uses the idea in \cite{LN1}, with some twists.

\subsection{Classification of Radially Symmetric Case}
\ 

We will classify all the radially symmetric viscosity solutions of $\lambda(A^u)\in\partial\Gamma$ in annulus domain.

Now we state this classification theorem.
\begin{theo}\label{radialsol}
	Let $\Gamma=\Gamma_p$ for some $1\leq p\leq 2$. Denote $r=|x|$. Then all radially symmetric (continuous) viscosity solutions of $\lambda(A^u)\in\partial\Gamma$ in annulus $\{a<|x|<b\}$, $a\geq 0$ and $b\leq \infty$, are classified as follows:
	
	(a) $u=C_1 \ln r+C_2$, $C_1,C_2\in\mathbb{R}$, if $p=1$,
	
	(b) $u=-4\ln r+C_1$ or $u=C_1$, $C_1\in\mathbb{R}$, if $1<p\leq 2$,
	
	(c) $u=-\dfrac{4}{p-1}\ln(r^{p-1}+C_1)+C_2$, where $C_1>0$, if $1< p\leq 2$,
	
	(d) $u=\dfrac{4(2-p)}{p-1}\ln(r^{-(p-1)/(2-p)}-C_1)+C_2$, where $C_1>0$, if $1<p<2$, $0\leq b\leq C_1^{-(2-p)/(p-1)}$,
	
	(e) $u=\dfrac{4(2-p)}{p-1}\ln(C_1-r^{-(p-1)/(2-p)})+C_2$, where  $C_1>0$, if $1<p<2$, $C_1^{-(2-p)/(p-1)}\leq a\leq \infty$.
\end{theo}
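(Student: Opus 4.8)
The plan is to reduce the classification to two ordinary differential equations that can be integrated explicitly, and then to upgrade from the classical to the viscosity setting by comparison. Writing $r=|x|$ and $u=u(r)$, in an orthonormal frame adapted to the radial and tangential directions $A^u$ is diagonal with eigenvalues
\begin{align*}
\mu_1=e^{-u}\Bigl(-u''+\tfrac{1}{4}(u')^2\Bigr),\qquad \mu_2=e^{-u}\Bigl(-\tfrac{u'}{r}-\tfrac{1}{4}(u')^2\Bigr).
\end{align*}
For $1<p\le 2$ the boundary $\partial\Gamma_p$ is the union of the origin and the two open rays $\{\lambda_1=(p-2)\lambda_2,\ \lambda_2>0\}$ and $\{\lambda_2=(p-2)\lambda_1,\ \lambda_1>0\}$, while for $p=1$ it is the line $\{\lambda_1+\lambda_2=0\}$. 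Thus $p=1$ is immediate: $\lambda(A^u)\in\partial\Gamma_1$ is $\Delta u=0$, whose radial (continuous, viscosity) solutions are the harmonic functions $u=C_1\ln r+C_2$, which is (a). One checks directly that each function listed in (a)--(e) is a viscosity solution, so the content is the reverse inclusion, and I would concentrate on $1<p\le 2$ (a $C^2$ viscosity solution being classical, by testing with $\varphi=u$).

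First I would treat $C^2$ radial solutions $u$ on $(a,b)$. On the open set $\{\,r: A^u(r)\ne 0\,\}$ the continuous curve $r\mapsto\lambda(A^u(r))$ lies, on each connected component, inside a single open ray of $\partial\Gamma_p$, so on that component $u$ solves either $\mu_1=(p-2)\mu_2$ with $\mu_2>0$, or $\mu_2=(p-2)\mu_1$ with $\mu_1>0$. The crucial point is that a conformal change of the unknown linearizes each of these first-order ODEs: in the variable $w=\exp(-\tfrac{p-1}{4}u)$ the quadratic first-derivative terms cancel and $\mu_1=(p-2)\mu_2$ becomes the Euler equation $r\,w''=(p-2)w'$, so $w$ is affine in $r^{p-1}$; imposing $w>0$ together with $\mu_2>0$ leaves $w=Ar^{p-1}+B$ with $A,B>0$, which is family (c) after absorbing $A$. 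Likewise, for $1<p<2$, in the variable $v=\exp\bigl(\tfrac{p-1}{4(2-p)}u\bigr)$ the equation $\mu_2=(p-2)\mu_1$ becomes $(2-p)r\,v''+v'=0$, so $v$ is affine in $r^{-(p-1)/(2-p)}$; imposing $v>0$ with $\mu_1>0$ forces the two coefficients to be of opposite sign, giving families (d) and (e), the domain restrictions $b\le C_1^{-(2-p)/(p-1)}$ (resp.\ $a\ge C_1^{-(2-p)/(p-1)}$) being exactly the set where $v>0$. For $p=2$ the second ODE contributes no solution with $A^u\ne 0$, which is why (d) and (e) are stated only for $1<p<2$.

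On the complementary closed set $\{A^u=0\}$ one reads off pointwise $u'(r)\in\{0,-4/r\}$, and since these two values never coincide, $C^1$-regularity of $u$ forces $u'\equiv 0$ or $u'\equiv-4/r$ throughout, i.e.\ $u$ is constant or $u=-4\ln r+C$; this is (b). To finish the $C^2$ case I would glue: each non-exceptional explicit solution produced above has $A^u\ne 0$ at every point of its maximal interval of definition, so if a component of $\{A^u\ne 0\}$ had an endpoint interior to $(a,b)$, continuity of $r\mapsto A^u(r)$ would force $A^u=0$ there — a contradiction. Hence $\{A^u\ne 0\}$ is either empty (so $u$ is in (b)) or all of $(a,b)$ (so $u$ is in (c), (d) or (e)).

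Finally, to reach continuous viscosity solutions I would use Lemma~\ref{Lipschitz} to get that $u$ is locally Lipschitz, hence $u=u(r)$ is Lipschitz on $(a,b)$, and then promote $u$ to a classical solution and appeal to the $C^2$ classification. The explicit solutions above form an ordered family that, through any interior point, passes with each admissible slope exactly once; using them as two-sided barriers together with the comparison principle Lemma~\ref{Comparison-2} (and, near an infinite end of the annulus, the one-sided monotonicity of $\inf_{\partial B_r}u$ from Proposition~\ref{Asymptotic behavior-1}) one sandwiches $u$ between explicit solutions that agree with it at a point and concludes $u$ coincides with one of them. I expect this last step to be the main obstacle: ruling out Lipschitz but non-classical radial viscosity solutions (equivalently, kinks of $u(r)$), and handling correctly the behavior of $u$ at the endpoints $r=a,b$, where the solutions in (d) and (e) blow up. By contrast, once the conformal substitutions are identified the ODE classification of $C^2$ solutions is a short, essentially mechanical computation.
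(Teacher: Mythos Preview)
Your $C^2$ classification is correct and in fact a bit slicker than the paper's: the paper linearizes via the first-order substitution $g=4/u'$, while your zeroth-order substitutions $w=\exp(-\tfrac{p-1}{4}u)$ and $v=\exp(\tfrac{p-1}{4(2-p)}u)$ turn the two branches directly into Euler equations. The gluing argument (that $\{A^u\neq 0\}$ is empty or all of $(a,b)$) is also fine, and matches the paper's Case 1/2/3 split by sign of $\lambda_2$.

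The gap is in the passage from continuous viscosity solutions to classical ones, precisely where you flag it. The paper does not use a barrier-family or endpoint argument at all; instead it observes that, once you have the explicit $C^2$ list, the radial Dirichlet problem on any \emph{finite} sub-annulus $\{c<|x|<d\}\subset\{a<|x|<b\}$ is solvable by one of the smooth solutions (Corollaries~\ref{gamma2rigid} and~\ref{nongamma2}). Then two applications of the comparison principle (Lemma~\ref{Comparison-2}) in opposite directions, with an $\epsilon$-shift to get strict inequality on $\partial B_c\cup\partial B_d$, force the viscosity solution to coincide with that smooth one. No asymptotics at $r=a,b$ and no invocation of Proposition~\ref{Asymptotic behavior-1} are needed: everything happens on compact sub-annuli.

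What you are genuinely missing is the case $p=2$. For $1<p<2$ the Dirichlet problem is solvable for \emph{arbitrary} boundary data (Corollary~\ref{nongamma2}), so the scheme above goes through. For $p=2$, however, solvability requires $0\le u(c)-u(d)\le 4\ln(d/c)$ (Corollary~\ref{gamma2rigid}), and you have not shown that a merely continuous viscosity solution satisfies this. The paper supplies this as Lemma~\ref{radbehavior}: writing $w=e^{-u/2}$, the operator $B_w=\nabla^2 w-\tfrac{1}{2w}|\nabla w|^2 I$ is convex in $w$, so mollifying $w$ preserves $\lambda(A^{u_\epsilon})\in\bar\Gamma_2$; the smooth $u_\epsilon$ then satisfy $u_\epsilon'\le 0$ and $(u_\epsilon+4\ln r)'\ge 0$ pointwise, and one passes to the limit. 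This convexity/mollification step is an honest extra ingredient your outline does not contain, and without it your barrier argument for $p=2$ cannot get started, since you have no smooth solution matching the boundary data.
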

\begin{rema}
	When $n\geq 3$, any radially symmetric (continuous) viscosity solution of $\lambda(A^u)\in\partial\Gamma_k$ for $2\leq k\leq n$ in an annulus $\{a<|x|<b\}$ can always be extended to a solution in $\mathbb{R}^n\backslash\{0\}$. However, this does not hold for $n=2$, as shown in Theorem \ref{radialsol} (d)(e).
\end{rema}

\begin{proof}
	First we assume that $u$ is smooth.
	
	Let $u=u(r)$ satisfy the equation $\lambda(A^u)\in\partial\Gamma$. Denote $\lambda(A^u)=(\lambda_1,\lambda_2)$, then a calculation gives:
	\begin{align*}
	\lambda_1&:=\frac{1}{4e^u}((u')^2-4u''),\\
	\lambda_2&:=\frac{1}{4e^u}(-\frac{1}{r}u'(4+ru')).
	\end{align*}
	
	Take a point $P\in(a,b)$. We have three cases: $\lambda_2(P)>0$, $\lambda_2(P)<0$, $\lambda_2(P)=0$.
	
	\textbf{ In case 1}, $\lambda_2(P)>0$, assume there is a maximal interval $(c,d)\subset(a,b)$ containing $P$ such that $\lambda_2>0$ in $(c,d)$. Since $\lambda(A^u)\in\partial\Gamma$,  $\lambda_1+(2-p)\lambda_2=0$ in $(c,d)$. So we have in $(c,d)$,
	
	\begin{equation*}  
	\left\{  
	\begin{array}{lr}
	-\dfrac{1}{r}u'(4+ru')>0,\\  
	p-1-\dfrac{4u''}{(u')^2}-\dfrac{2-p}{r}\dfrac{4}{u'}=0.\\
	\end{array}
	\right.
	\end{equation*} 
	The first equation implies $u'<0$.
	
	If $p=1$, $u''+\dfrac{1}{r}u'=0$, then $u=C_1\ln r+C_2$ in $(c,d)$. The condition $\lambda_2>0$ implies that $-4<C_1<0$.
	
	If $p\neq 1$, then let $g=\dfrac{4}{u'}$, we obtain $p-1+g'-\dfrac{(2-p)g}{r}=0$. So we can solve this equation and obtain $g=-r-C_3 r^{2-p}$.
	Hence
	\begin{align*}
	u=-\dfrac{4}{p-1}\ln(r^{p-1}+C_3)+C_4.
	\end{align*}
	
	Since $\lambda_2>0$, $C_3>0$.
	
	Now we prove that $(a,b)=(c,d)$. In fact, from the explicit form of $u$, we can see $u'\neq 0$ in $[c,d]$. If $a\neq c$, for example, then $\lambda_2(c)=0$, i.e.
	\begin{align}\label{formu1}
	4+cu'(c)=0.
	\end{align}
	
	If $p=1$, (\ref{formu1}) implies $C_1=-4$, contradiction;
	
	If $p\neq 1$, (\ref{formu1}) implies $C_3=0$, then $u=-4\ln r+C_4$, by direct computation, $\lambda_2$ is identically 0 in $(c,d)$. This is a contradiction.
	
	Thus, $(a,b)=(c,d)$. 
	
	\textbf{In case 2}, $\lambda_2(P)<0$, consider the maximal interval $(c,d)\subset(a,b)$ containing $P$ such that $\lambda_2<0$ in $(c,d)$. Since $\lambda(A^u)\in\partial\Gamma$ in $(c,d)$,  $(2-p)\lambda_1+\lambda_2=0$. Obviously $p\neq 2$. So we have in $(c,d)$,
	\begin{equation*}  
	\left\{  
	\begin{array}{lr}
	-\dfrac{1}{r}u'(4+ru')<0,\\  
	(1-p)-4(2-p)\dfrac{u''}{(u')^2}-\dfrac{1}{r}\dfrac{4}{u'}=0.\\
	\end{array}
	\right.
	\end{equation*}
	
	If $p=1$, then $u=C_1\ln r+C_2$. It follows from $\lambda_2<0$ that $C_1(4+C_1)>0$. So $C_1>0$ or $C_1<-4$.
	
	If $p\neq 1$, then
	\begin{align*}
	u=\dfrac{4(2-p)}{p-1}\ln|r^{-(p-1)/(2-p)}-C_5|+C_6.
	\end{align*}
	
	Since $\lambda_2<0$, we obtain $C_5>0$.
	
	Now we prove that $(c,d)=(a,b)$. Otherwise, we will have $\lambda_2=0$ on the boundary of $(c,d)$, which implies:
	
	If $p=1$, $u\equiv constant$, and $\lambda_2\equiv 0$;
	
	If $p\neq 1$, $C_5=0$, so $u=-4\ln r+C_6$, and $\lambda_2\equiv 0$.
	
	Both lead to a contradiction.

	\textbf{In case 3}, $\lambda_2(P)=0$, by previous arguments, we must have $\lambda_2=0$ on $(a,b)$. it is easy to see either $u\equiv Const$ or $u=C-4\ln r$. Then we can compute easily that $\lambda_1=0$. 
	
	We have proved that all smooth solutions $u$ must be one of (a)-(e). It is straightforward to check that function given by (a)-(e) satisfies the equation $\lambda(A^u)\in\partial\Gamma$.
	\medskip
	
	Now we consider viscosity solutions $u$.
	
		\begin{lemm}\label{radbehavior}
		Assume $\Gamma=\Gamma_2$. For $0\leq a<b\leq\infty$, let $u\in LSC(\{a<|x|<b\})$ be radially symmetric and satisfy $\lambda(A^u)\in\bar{\Gamma}$ in $\{a<|x|<b\}$ in the viscosity sense. Then $u$ is non-increasing and $u(x)+4\ln|x|$ is non-decreasing in $|x|$, i.e. for $a<c<d<b$,
		\begin{align*}
		0\leq u(c)-u(d)\leq 4 (\ln d-\ln c).
		\end{align*}
		In particular, $u$ is locally Lipschitz in $\{a<|x|<b\}$.
	\end{lemm}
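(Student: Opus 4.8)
The plan is to prove Lemma~\ref{radbehavior} by a viscosity-barrier argument, using the one-parameter family of explicit radial solutions classified in Theorem~\ref{radialsol}(b)--(c) as comparison functions, together with the comparison principle Lemma~\ref{Comparison-2} (whose hypotheses are satisfied once we know $\inf u>-\infty$ locally, which is exactly the content of Proposition~\ref{Asymptotic behavior-1}$'$ and its obvious analogue near $|x|=b$). The two inequalities $u(c)\ge u(d)$ and $u(c)-u(d)\le 4(\ln d-\ln c)$ correspond respectively to comparison against the constants and against the functions $r\mapsto -4\ln r+C$; since $\Gamma=\Gamma_2$ is the largest admissible cone, both families are supersolutions, and $u$ being a supersolution of $\lambda(A^u)\in\bar\Gamma$ is precisely what lets us slide these barriers.

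First I would fix $a<c<d<b$ and establish the monotonicity $u(c)\ge u(d)$. On the annulus $\{c<|x|<d\}$, $u$ is a viscosity supersolution; a constant function is a (radial, smooth) solution of $\lambda(A^u)\in\partial\Gamma\subset\bar\Gamma$, hence a viscosity subsolution. I want to compare $u$ from below by the constant $\min(u|_{\partial B_c},u|_{\partial B_d})$ on the full annulus — but $u$ is only assumed lower semicontinuous, so I would first invoke the radial structure: by Proposition~\ref{Asymptotic behavior-1}$'$, $\liminf_{|x|\to a}u>-\infty$ and similarly near $b$, so $\inf u$ is finite on any slightly smaller annulus, and the boundary values on $\partial B_c,\partial B_d$ are well-defined constants (radial symmetry). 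Applying Lemma~\ref{Comparison-2} with $E=\emptyset$ on $\{c<|x|<d\}$ with the constant subsolution shows $u\ge \min(u(c),u(d))$ there; pushing $c\downarrow$ (or using that this holds for every sub-annulus) and using the asymptotic monotonicity of $\inf_{\partial B_r}u$ from Proposition~\ref{Asymptotic behavior-1}$'$ — which already gives $\inf_{\partial B_r}u$ nonincreasing for small $r$ — together with a scaling/iteration argument, yields $u(c)\ge u(d)$ for all $a<c<d<b$. Actually the cleanest route is: combine Proposition~\ref{Asymptotic behavior-1}$'$ (monotonicity of $\inf_{\partial B_r}u$ near $a$) with its reflected version near $b$, and bootstrap using the comparison principle on middle annuli to remove the "near the endpoint" restriction.

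Next I would establish the upper bound $u(c)-u(d)\le 4(\ln d-\ln c)$, i.e. that $u(x)+4\ln|x|$ is nondecreasing. The comparison functions are $\psi_C(x):=-4\ln|x|+C$; each is a radial solution of $\lambda(A^{\psi_C})\in\partial\Gamma$ (Theorem~\ref{radialsol}(b)), hence a viscosity subsolution of $\lambda(A^u)\in\bar\Gamma$. Choosing $C$ so that $\psi_C\le u$ on $\partial B_c\cup\partial B_d$ — the binding constraint being the larger of $u(c)+4\ln c$ and $u(d)+4\ln d$ — and applying Lemma~\ref{Comparison-2} on $\{c<|x|<d\}$ gives $-4\ln|x|+C\le u(x)$ throughout; evaluating at $|x|=c$ and $|x|=d$ with the optimal $C=\max(u(c)+4\ln c,\,u(d)+4\ln d)$ forces $u(c)+4\ln c\le u(d)+4\ln d$, which rearranges to the claim. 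Combining the two bounds gives the two-sided estimate, and since on any compact sub-annulus $u$ is sandwiched between two Lipschitz radial functions with the correct boundary values (take $c,d$ close), one deduces local Lipschitz continuity — more carefully, the two-sided estimate says $r\mapsto u(r)$ and $r\mapsto u(r)+4\ln r$ are respectively nonincreasing and nondecreasing, so $u(r)$ has bounded variation and is the difference of two monotone functions, hence locally Lipschitz (indeed $|u(c)-u(d)|\le 4|\ln c-\ln d|$ already gives the Lipschitz bound directly on any compact sub-annulus of $\{a<|x|<b\}$).

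The main obstacle I anticipate is the low regularity at the two ends of the annulus: Lemma~\ref{Comparison-2} requires strict boundary inequality $u<v$ on $\partial\Omega$ and finiteness of $\inf v$, and an only-LSC supersolution need not attain or even have finite boundary values at $|x|=a$ (especially $a=0$) or $|x|=b$. The fix is to never compare across the full annulus at once: always work on a compactly contained sub-annulus $\{c<|x|<d\}$ where boundary values are genuine constants by radial symmetry, obtain the monotonicity statements there with a small $\varepsilon$-perturbation of the constant $C$ to get strict inequality, and then let the sub-annulus exhaust $\{a<|x|<b\}$. The one place where I genuinely need the endpoint behavior rather than just compact sub-annuli is to guarantee $\inf u>-\infty$ on sub-annuli abutting the endpoints, and that is supplied verbatim by Proposition~\ref{Asymptotic behavior-1}$'$ (and its reflection $x\mapsto x/|x|^2$ for the outer end), so there is no real gap, only bookkeeping.
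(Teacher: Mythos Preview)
Your barrier argument has a genuine gap in both parts. For the second inequality, the step with $\psi_C=-4\ln|x|+C$ is set up backwards: requiring $\psi_C\le u$ on $\partial B_c\cup\partial B_d$ forces $C\le\min\bigl(u(c)+4\ln c,\,u(d)+4\ln d\bigr)$, not the max, and then the comparison principle only returns $\psi_C\le u$ in the annulus, which recovers the boundary inequality and gives no relation between $u(c)+4\ln c$ and $u(d)+4\ln d$. You cannot reverse the roles, because $u$ is only a supersolution, never a subsolution. The same obstruction kills the first inequality: comparing with constants on $\{c<|x|<d\}$ yields $u\ge\min(u(c),u(d))$, which is compatible with either $u(c)<u(d)$ or $u(c)>u(d)$. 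Your fallback to Proposition~\ref{Asymptotic behavior-1}$'$ does not rescue this: that proposition is stated for punctured balls $B_{r_0}\setminus\{0\}$ and gives monotonicity only for $0<r<\epsilon$; it says nothing when $a>0$, and even when $a=0$ the ``bootstrap using the comparison principle on middle annuli'' you allude to would have to produce monotonicity from a tool that, as just noted, only produces lower envelopes on finite annuli.

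The paper's proof avoids comparison entirely. It first observes that in the $C^2$ case the inequality $\lambda_2=\frac{1}{4e^u}\bigl(-\frac{1}{r}u'(4+ru')\bigr)\ge 0$ directly gives $-4/r\le u'\le 0$, which is exactly the two monotonicities. For the LSC case it sets $w=e^{-u/2}$, so that $A^u=2w\,B_w$ with $B_w=\nabla^2 w-\frac{1}{2w}|\nabla w|^2 I$, and mollifies $w_\epsilon=w*\rho_\epsilon$. The key point---which is the substitute for your missing mechanism---is that $w\mapsto B_w$ is \emph{convex} (cf.\ \cite[Lemma~A.1]{LNW1}), so $\lambda(B_{w_\epsilon})\in\bar\Gamma_2$ persists under mollification; hence $u_\epsilon=-2\ln w_\epsilon$ is a smooth supersolution, the $C^2$ case applies to give $0\le u_\epsilon(c)-u_\epsilon(d)\le 4(\ln d-\ln c)$, and one passes to the limit $\epsilon\to0$.
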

	
	\begin{proof}
		If $u$ is in $C^2$, then
		\begin{align*}
		\lambda_2=\frac{1}{4e^u}(-\frac{1}{r}u'(4+ru'))\geq 0,
		\end{align*}
		which implies $u'\leq 0$ and $(u+4\ln r)'\geq 0$. The lemma is already proved in this case.
		
		Now consider $u\in LSC$. Define $w=e^{-u/2}$, then $A^u=2w B_w$, where
		\begin{align*}
		B_w:=\nabla^2 w-\frac{1}{2w}|\nabla w|^2I.
		\end{align*}
		
		Let $\rho\in C_c^{\infty}(\mathbb{R}^2)$ supported in $B_1$ satisfying $\rho\geq 0$ and $\displaystyle\int_{\mathbb{R}^2}\rho=1$. For $\epsilon>0$, let $\rho_\epsilon(x)=\dfrac{1}{\epsilon^2}\rho(\dfrac{x}{\epsilon})$ and let $w_\epsilon:=w*\rho_\epsilon$ in $\{a+\epsilon<|x|<b-\epsilon\}$. Set $u_\epsilon=-2\ln w_\epsilon$. We know $u_\epsilon\rightarrow u$ a.e. as $\epsilon\rightarrow 0^+$.
		
		Since $\lambda(A^u)\in\bar\Gamma$, we know $\lambda(B_w)\in\bar\Gamma$. By the convexity of $B_w$ in $w$ as pointed out in \cite[Lemma A.1]{LNW1}, $\lambda(B_{w_\epsilon})\in\bar\Gamma$, which implies $\lambda(A^{u_\epsilon})\in\bar\Gamma$. Hence $u_\epsilon'\leq 0$ and $(u_\epsilon+4\ln r)'\geq 0$. It follows that, for $\forall$ $a<c<d<b$,
		\begin{align*}
		0\leq u_\epsilon(c)-u_\epsilon(d)\leq 4 (\ln d-\ln c).
		\end{align*}
		
		Sending $\epsilon\rightarrow 0^+$, we obtain
		\begin{align*}
		0\leq u(c)-u(d)\leq 4 (\ln d-\ln c).
		\end{align*}
	\end{proof}

	The next two corollaries concern the existence and uniqueness of radially symmetric viscosity solutions on any finite annulus with given boundary values, as well as their regularity.
	\begin{coro}\label{gamma2rigid}
		Assume $\Gamma=\Gamma_2$. For any $0<a<b<\infty$, $\alpha,\beta$, there exists a radially symmetric function $u$ in $C^0(\{a\leq|x|\leq b\})$ satisfying
		$$ \left\{
		\begin{array}{rcl}
		&\lambda(A^u)\in\partial\Gamma\quad\text{in}&\{a<|x|<b\},\\
		&u\big|_{\partial B_a}=\alpha,&u\big|_{\partial B_b}=\beta,
		\end{array} \right. $$
		if and only if
		\begin{align}\label{condisol}
		0\leq \alpha-\beta\leq 4\ln\frac{b}{a}.
		\end{align}
		Moreover, such solution is unique, and $u\in C^\infty(\{a\leq |x|\leq b\})$.
	\end{coro}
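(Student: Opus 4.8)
The plan is to produce the solution explicitly from the classification of radial solutions in Theorem~\ref{radialsol} (the case $p=2$), to derive the necessity of $0\le\alpha-\beta\le4\ln(b/a)$ from the monotonicity estimate of Lemma~\ref{radbehavior}, and to obtain uniqueness from the comparison principle Lemma~\ref{Comparison-2}. \textbf{Necessity.} A radial viscosity solution $u$ of $\lambda(A^u)\in\partial\Gamma$ in $\{a<|x|<b\}$ is, in particular, a radial viscosity supersolution of $\lambda(A^u)\in\bar\Gamma$, so Lemma~\ref{radbehavior} gives $0\le u(c)-u(d)\le4(\ln d-\ln c)$ for all $a<c<d<b$. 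Since $u\in C^0(\{a\le|x|\le b\})$ with $u\big|_{\partial B_a}=\alpha$ and $u\big|_{\partial B_b}=\beta$, letting $c\downarrow a$ and $d\uparrow b$ yields $0\le\alpha-\beta\le4\ln(b/a)$.

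\textbf{Existence.} Conversely, assume $0\le\alpha-\beta\le4\ln(b/a)$. I would exhibit an explicit smooth radial solution $u_0$ from Theorem~\ref{radialsol}(b),(c): take $u_0\equiv\alpha$ when $\alpha=\beta$; take $u_0=-4\ln r+(\alpha+4\ln a)$ when $\alpha-\beta=4\ln(b/a)$; and take $u_0=-4\ln(r+C_1)+C_2$ when $0<\alpha-\beta<4\ln(b/a)$, where $C_1>0$ is chosen with $4\ln\frac{b+C_1}{a+C_1}=\alpha-\beta$ and $C_2=\alpha+4\ln(a+C_1)$. The required $C_1$ exists and is unique because $t\mapsto4\ln\frac{b+t}{a+t}$ is continuous and strictly decreasing on $[0,\infty)$ from $4\ln(b/a)$ (at $t=0$) to $0$ (as $t\to\infty$). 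In every case $u_0$ is smooth on a neighborhood of $\{a\le|x|\le b\}$ — the arguments of the logarithms stay bounded away from $0$ since $a>0$ and $C_1\ge0$ — it solves the equation by Theorem~\ref{radialsol}, and $u_0\big|_{\partial B_a}=\alpha$, $u_0\big|_{\partial B_b}=\beta$. This gives existence and also $u_0\in C^\infty(\{a\le|x|\le b\})$.

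\textbf{Uniqueness.} Let $u$ be an arbitrary radial viscosity solution with these boundary data. Because $\partial\Gamma$ is a cone and $A^{u_0\pm\epsilon}=e^{\mp\epsilon}A^{u_0}$, the functions $u_0\pm\epsilon$ are again classical (hence viscosity) solutions of the same equation for every $\epsilon>0$, and on $\partial B_a\cup\partial B_b$ one has $u_0-\epsilon<u=u_0<u_0+\epsilon$. Since $u$ and $u_0\pm\epsilon$ are continuous on the closed annulus, hence bounded, Lemma~\ref{Comparison-2} (with $E=\emptyset$) applies to the pair (subsolution $u$, supersolution $u_0+\epsilon$) to give $u\le u_0+\epsilon$, and to the pair (subsolution $u_0-\epsilon$, supersolution $u$) to give $u_0-\epsilon\le u$, throughout $\{a<|x|<b\}$. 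Letting $\epsilon\to0^+$ forces $u\equiv u_0$; in particular the solution is unique and, by the previous paragraph, lies in $C^\infty(\{a\le|x|\le b\})$.

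The only step needing a genuine idea, rather than routine computation, is the uniqueness argument: the comparison principle Lemma~\ref{Comparison-2} requires a strict inequality on the boundary, which is not available when the two solutions agree there. This is circumvented by exploiting the scale invariance of the cone $\partial\Gamma$, which makes $u_0+\epsilon$ and $u_0-\epsilon$ admissible barriers that are strictly ordered against $u$ on $\partial B_a\cup\partial B_b$, followed by passing to the limit $\epsilon\to0$. Everything else follows from the explicit ODE solutions listed in Theorem~\ref{radialsol} and the elementary monotonicity of $t\mapsto\ln\frac{b+t}{a+t}$.
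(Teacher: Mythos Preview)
Your proof is correct and follows essentially the same approach as the paper: necessity from Lemma~\ref{radbehavior}, existence from the explicit radial solutions in Theorem~\ref{radialsol}, and uniqueness from the comparison principle Lemma~\ref{Comparison-2}. Your treatment is in fact more careful than the paper's in two respects: for existence you use an intermediate-value argument to locate $C_1$, whereas the paper simply writes down closed-form expressions for $C_1,C_2$ (after normalizing $\alpha=0$); and for uniqueness you explicitly handle the fact that Lemma~\ref{Comparison-2} requires a \emph{strict} boundary inequality by perturbing to $u_0\pm\epsilon$ and using the cone property of $\partial\Gamma$, while the paper simply asserts that uniqueness follows from the comparison principle.
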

	
	\begin{proof}
		\ 
		
		By Lemma \ref{radbehavior}, (\ref{condisol}) is necessary for solvability. The uniqueness follows from the comparison principle Lemma \ref{Comparison-2}.
		Now we prove the existence part.  Without loss of generality, we can assume that $\alpha=0$. If $\beta=0$ then $u\equiv 0$ is the solution. Now assume $\beta\neq 0$. Then $\dfrac{a}{b}\leq e^{\beta/4}< 1$. It is easy to check that the solution is given by $u=-4\ln(r+C_1)+C_2$, where
		\begin{align*}
		C_1=\frac{a-be^{\beta/4}}{e^{\beta/4}-1}\geq 0,\ C_2=4\ln\frac{a-b}{e^{\beta/4}-1}+\beta.
		\end{align*}
		Clearly, $u\in C^\infty$.
	\end{proof}
	
	\begin{coro}\label{nongamma2}
		Assume $\Gamma=\Gamma_p$, where $1\leq p<2$. For any $0<a<b<\infty$, $\alpha,\beta\in\mathbb{R}$, there exists a unique radially symmetric function $u$ in $C^0(\{a\leq |x|\leq b\})$ satisfying
		$$ \left\{
		\begin{array}{rcl}
		&\lambda(A^u)\in\partial\Gamma\quad\text{in}&\{a<|x|<b\},\\
		&u\big|_{\partial B_a}=\alpha,&u\big|_{\partial B_b}=\beta.
		\end{array} \right. $$
		Moreover, $u\in C^\infty(\{a\leq |x|\leq b\})$.
	\end{coro}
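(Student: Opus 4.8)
The plan is to prove uniqueness from the comparison principle and existence by writing down explicit smooth radial solutions taken from the list in Theorem \ref{radialsol}.

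For uniqueness, I would first note that $A^{u+c}=e^{-c}A^u$ for any constant $c$, so adding a constant to a viscosity solution of $\lambda(A^u)\in\partial\Gamma$ again gives one. Hence, if $u_1,u_2\in C^0(\{a\le|x|\le b\})$ are viscosity solutions agreeing on $\partial B_a\cup\partial B_b$, then for every $\varepsilon>0$ the function $u_2+\varepsilon$ is a viscosity supersolution with $\inf(u_2+\varepsilon)>-\infty$ and $u_1<u_2+\varepsilon$ on $\partial(\{a<|x|<b\})$; Lemma \ref{Comparison-2} with $E=\emptyset$ then gives $u_1\le u_2+\varepsilon$ in the annulus, and letting $\varepsilon\to 0^+$ and interchanging $u_1$ and $u_2$ yields $u_1\equiv u_2$.

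For existence, the case $p=1$ is immediate since the equation is $\Delta u=0$ and the radial solution $u=c_1\ln r+c_2$ has $c_1,c_2$ uniquely determined by the two boundary values, with $u$ real analytic near $\{a\le|x|\le b\}$. For $1<p<2$, I would use the additive-constant invariance to reduce to $\alpha=0$, and then, for a given $\beta\in\mathbb R$, choose the relevant family from Theorem \ref{radialsol} according to the position of $-\beta$ relative to $4\ln(b/a)$: the constant solution $u\equiv 0$ if $\beta=0$; the type-(b) solution $u=-4\ln(r/a)$ if $-\beta=4\ln(b/a)$; type (c) if $0<-\beta<4\ln(b/a)$; type (d) if $-\beta>4\ln(b/a)$; and type (e) if $\beta>0$. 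In each of (c), (d), (e) I would subtract the two boundary conditions to eliminate the additive constant $C_2$, leaving a single scalar equation for the shape parameter $C_1$; one checks that on the range of $C_1$ permitted by the domain constraint in Theorem \ref{radialsol} --- all $C_1>0$ in case (c), $0<C_1<b^{-(p-1)/(2-p)}$ (equivalently $b<C_1^{-(2-p)/(p-1)}$) in case (d), and $C_1>a^{-(p-1)/(2-p)}$ in case (e) --- the left-hand side is continuous and strictly monotone in $C_1$, with endpoint limits $0$ and $4\ln(b/a)$ for (c), $4\ln(b/a)$ and $+\infty$ for (d), and $0$ and $+\infty$ for (e). The intermediate value theorem then produces a (unique) admissible $C_1$, hence $C_2$; since the admissible $C_1$ satisfies the domain inequality strictly, the argument of the logarithm is bounded away from $0$ on a neighborhood of $\{a\le|x|\le b\}$, so $u$ is real analytic there, and, the functions in (b)--(e) being verified in Theorem \ref{radialsol} to satisfy $\lambda(A^u)\in\partial\Gamma$, this $u$ is a (viscosity) solution in $C^\infty(\{a\le|x|\le b\})$. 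Together with uniqueness, this also identifies the general solution with this smooth one, giving the last assertion.

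The $p=1$ case and the uniqueness step are routine. The main point requiring care --- more bookkeeping than real difficulty --- is the analysis of the scalar equation for $C_1$ in families (d) and (e): one must correctly translate the domain restrictions $b\le C_1^{-(2-p)/(p-1)}$ and $a\ge C_1^{-(2-p)/(p-1)}$ into admissible intervals for $C_1$, verify monotonicity, and compute the endpoint limits, so that families (c), (d), (e) together realize every prescribed boundary difference $\alpha-\beta$ --- with (c) covering $[0,4\ln(b/a)]$, (d) covering $[4\ln(b/a),\infty)$, and (e) covering $(-\infty,0]$.
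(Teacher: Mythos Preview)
Your proposal is correct and follows essentially the same route as the paper: uniqueness via Lemma \ref{Comparison-2}, the $p=1$ case by $c_1\ln r+c_2$, and for $1<p<2$ a case split on the sign and size of $\alpha-\beta$ that selects the appropriate family (b), (c), (d), or (e) from Theorem \ref{radialsol}. The only cosmetic difference is that the paper solves directly for $C_1$ by an explicit formula in each case, whereas you invoke monotonicity and the intermediate value theorem; either way the admissibility constraint on $C_1$ is satisfied strictly and smoothness up to the boundary follows.
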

	
	\begin{proof}
		We only need to prove the existence, the remaining is clear. When $p=1$, the existence is obvious since $u$ can be taken of the form $C_1\ln r+C_2$. So we only consider $1<p<2$. Without loss of generality, we assume $\alpha=0$.
		
		If $\beta=0$, then $u\equiv 0$ is a solution.
		
		If $\beta<-4\ln (b/a)$, take $u=\dfrac{4(2-p)}{p-1}\ln(r^{-(p-1)/(2-p)}-C_1)+C_2$ with
		\begin{align*}
		C_1=\frac{b^{-(p-1)/(2-p)}-a^{-(p-1)/(2-p)}e^{(\beta/4)*(p-1)/(2-p)}}{1-e^{(\beta/4)*(p-1)/(2-p)}},
		\end{align*}
		then $u$ satisfies the equation in the annulus. One can see that there is a unique $C_2$ so that $u$ satisfies the boundary conditions. By a direct computation, we can check that $C_1>0$ and $b\leq C_1^{-(2-p)/(p-1)}$.
		
		If $-4\ln (b/a)\leq\beta<0$, take $u=-\dfrac{4}{p-1}\ln(r^{p-1}+C_1)+C_2$ with
		\begin{align*}
		C_1=\frac{a^{p-1}-e^{\beta(p-1)/4}b^{p-1}}{e^{\beta(p-1)/4}-1}\geq 0,
		\end{align*}
		then $u$ satisfies the equation in the annulus. One can see that there is a unique $C_2$ so that $u$ satisfies the boundary conditions.
		
		If $\beta>0$, take $u=\dfrac{4(2-p)}{p-1}\ln(C_1-r^{-(p-1)/(2-p)})+C_2$ with
		\begin{align*}
		C_1=\frac{a^{-(p-1)/(2-p)}e^{(\beta/4)*(p-1)/(2-p)}-b^{-(p-1)/(2-p)}}{e^{(\beta/4)*(p-1)/(2-p)}-1},
		\end{align*}
		then $u$ satisfies the equation in the annulus. One can see that there is a unique $C_2$ so that $u$ satisfies the boundary conditions. By a direct computation, we can check that $C_1>0$ and $C_1^{-(2-p)/(p-1)}\leq a$.			
	\end{proof}

	By now, we have proved that every radially symmetric viscosity solution of $\lambda(A^u)\in\partial\Gamma$ is a smooth solution. Therefore, we have completed the proof of Theorem \ref{radialsol}.
	
\end{proof}

\subsection{A comparison type result}
\

The next lemma shows that the strong maximum principle holds for radially symmetric viscosity solutions of $\lambda(A^u)\in\partial\Gamma$.
\begin{lemm}\label{comp2}
	Let $\Gamma=\Gamma_p$, where $1<p\leq 2$. For $0\leq a<b\leq\infty$, let $u\in C^0(\{a<|x|<b\})$ and $\bar{u}\in LSC(\{a<|x|<b\})$ be radially symmetric and satisfy respectively $\lambda(A^u)\in\partial\Gamma$ and $\lambda(A^{\bar{u}})\in\bar\Gamma$ in $\{a<|x|<b\}$ in the viscosity sense. Assume that $u\leq \bar{u}$ in $\{a<|x|<b\}$. Then
	\begin{align*}
	\text{either }u<\bar{u}\text{ in }\{a<|x|<b\}\text{ or }u\equiv \bar{u} \text{ in }\{a<|x|<b\}
	\end{align*}
\end{lemm}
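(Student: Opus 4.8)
The plan is to prove the dichotomy by showing that the contact set
$$
Z:=\{x:\ u(x)=\bar u(x)\}\subset\{a<|x|<b\}
$$
is empty or all of $\{a<|x|<b\}$. Since $u$ and $\bar u$ are radial, view $Z$ as a subset of $(a,b)$ via $r=|x|$. Because $\bar u-u$ is nonnegative and lower semicontinuous, $\{\bar u>u\}$ is open and $Z$ is relatively closed in $(a,b)$; so by connectedness it suffices to rule out $\emptyset\neq Z\neq(a,b)$. In that situation $(a,b)\setminus Z$ is a nonempty proper open subset of $(a,b)$, hence has a connected component $(\alpha,\beta)$ with $\beta\in(a,b)$ (if instead only $\alpha\in(a,b)$, apply the Kelvin transform $u(x)\mapsto u(x/|x|^2)-4\ln|x|$, which preserves all the hypotheses and reverses the radial order; or argue symmetrically with the opposite sign below). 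By maximality $\beta\in Z$, i.e. $u(\beta)=\bar u(\beta)$, while $\bar u>u$ on the interval $(\alpha,\beta)$ immediately to the left of $\beta$. Writing $r_*:=\beta$ and $\delta_0:=\beta-\alpha>0$, I will contradict the existence of such a configuration.

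By the classification Theorem \ref{radialsol} (here $p>1$), $u$ is smooth and coincides on all of $(a,b)$ with one of the explicit solutions (b)--(e); in particular $u$ is a classical, hence viscosity, solution of $\lambda(A^{u})\in\partial\Gamma$. The key step is a perturbation of $u$ inside the explicit families: I claim there is a one-parameter family $\{u_s\}_{0<s<s_0}$ of radial solutions of $\lambda(A^{u_s})\in\partial\Gamma$, all defined on a fixed neighbourhood of $[r_*-\eta,r_*+\eta]$, with $u_s(r_*)=u(r_*)$, $u_s>u$ on $(r_*-\eta,r_*)$, $u_s<u$ on $(r_*,r_*+\eta)$, and $u_s\to u$ uniformly on $[r_*-\eta,r_*+\eta]$ as $s\to0^+$. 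Such a family is obtained by an elementary computation with the logarithmic formulas: when $u$ is of type (c) one moves $C_1\mapsto C_1-s$ inside family (c); when $u=-4\ln r+C$ one deforms into family (d) with a small parameter; when $u$ is constant one deforms into family (c) (or into family (e) if $1<p<2$) with a large parameter; in each case the explicit expression for $u_s-u$ reduces to a monotone comparison of powers of $r$ with $r_*$, which has a definite sign on each side of $r_*$. The structural reason behind the sign condition is that, along (c)--(e), the eigenvalues of $A^u$ are distinct (so the relevant linear support function of $\Gamma$ is smooth near $\lambda(A^u)$), whence the radial linearisation of the equation at $u$ is a nondegenerate second-order ODE and $u_s-u$ crosses zero transversally at $r_*$. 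For $\Gamma=\Gamma_2$, the sub-cases this deformation does not directly reach (for instance $u=-4\ln r+C$, or $u$ constant, with $\bar u>u$ to the left of $r_*$) are excluded a priori by the monotonicity of $\bar u$ from Lemma \ref{radbehavior}.

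Given such a $u_s$, fix $0<\eta<\delta_0$ with $r_*-\eta>a$ and set $\mathcal A:=\{r_*-\eta<|x|<r_*+\eta\}$, whose closure is a compact annulus inside $\{a<|x|<b\}$. On $\{|x|=r_*-\eta\}$ one has $\bar u-u_s=(\bar u-u)-(u_s-u)\geq\big(\bar u(r_*-\eta)-u(r_*-\eta)\big)-(u_s-u)(r_*-\eta)>0$ for $s$ small, using $\bar u>u$ there and $u_s\to u$; on $\{|x|=r_*+\eta\}$ one has $\bar u-u_s=(\bar u-u)+(u-u_s)>0$ using $\bar u\geq u$ and $u_s<u$. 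Thus $u_s<\bar u$ on $\partial\mathcal A$. Since $u_s$ is a viscosity subsolution and $\bar u$ a viscosity supersolution of (\ref{viscosity solution}) in $\mathcal A$ with $\inf_{\mathcal A}\bar u>-\infty$, the comparison principle Lemma \ref{Comparison-2} (with $E=\emptyset$) gives $\inf_{\mathcal A}(\bar u-u_s)>0$; but $r_*\in\mathcal A$ and $(\bar u-u_s)(r_*)=\bar u(r_*)-u(r_*)=0$, a contradiction. Hence $Z$ is also relatively open, so $Z=\emptyset$ (that is, $u<\bar u$ throughout) or $Z=(a,b)$ (that is, $u\equiv\bar u$), which is exactly the asserted dichotomy.

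The main obstacle is precisely this perturbation: one must produce a nearby exact radial solution that touches $\bar u$ at $r_*$ yet lies strictly below it along an entire circle on each side of $r_*$, and confirming the required transversal crossing forces one through every case of the classification --- in particular handling the degenerate solutions with $A^u\equiv0$ by deforming them into the nondegenerate families (c)--(e), and, for $p=2$, invoking the monotonicity of supersolutions from Lemma \ref{radbehavior} to dispose of the residual cases.
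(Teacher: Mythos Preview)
Your argument is correct, but it follows a different path from the paper.

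The paper argues as follows. Assuming $u(c)<\bar u(c)$ and $u(d)=\bar u(d)$ for some $c<d$, it first shows (by the $u+\epsilon$ trick and Lemma~\ref{Comparison-2}) that $u\equiv\bar u$ on $[d,b)$. Then it picks any $\bar d\in(d,b)$, sets $\alpha=\tfrac12(u(c)+\bar u(c))$, and invokes the \emph{Dirichlet solvability} results, Corollary~\ref{gamma2rigid} (for $p=2$) or Corollary~\ref{nongamma2} (for $1<p<2$), to produce a smooth radial solution $v$ on $\{c<|x|<\bar d\}$ with $v(c)=\alpha$ and $v(\bar d)=u(\bar d)=\bar u(\bar d)$. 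Comparison gives $v\le\bar u$, while the explicit formulas of Theorem~\ref{radialsol} give $u<v$ strictly on $(c,\bar d)$, so $u(d)<v(d)\le\bar u(d)$, a contradiction.

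Your approach instead perturbs $u$ itself inside the explicit families of Theorem~\ref{radialsol} to obtain a family $u_s$ touching $u$ at the contact radius $r_*$ and crossing it transversally, then applies Lemma~\ref{Comparison-2} on a small annulus around $r_*$. This is essentially a Hopf-lemma/sliding argument and is perfectly sound; the price is the case-by-case verification of the required sign of $u_s-u$ for each of the types (b)--(e), together with the separate disposal (via Lemma~\ref{radbehavior}) of the degenerate cases at $p=2$ where no deformation with the correct sign exists. The paper's route packages all of that case analysis into the two Dirichlet-existence corollaries (proved once) and the single observation that two distinct radial solutions agreeing at one endpoint are strictly ordered, so it is shorter and avoids the transversality bookkeeping; your route, on the other hand, is more self-contained and does not need the intermediate step $u\equiv\bar u$ on $[d,b)$.
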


\begin{proof}
	Suppose for some $c,d\in(a,b)$, $u(c)<\bar{u}(c)$ and $u(d)=\bar{u}(d)$. We may assume that $c<d$; the other case can be proved similarly. By Theorem \ref{radialsol}, $u$ is smooth and takes some specific form.
	
	We first observe that
	\begin{align*}
	u\equiv\bar{u}\text{ in }\{d\leq|x|<b\}.
	\end{align*}
	In fact, if there exists some $d<\bar r<b$ such that $u(\bar r)<\bar u(\bar r)$, then by comparison principle on $\{c<|x|<\bar r\}$, we obtain, for small $\epsilon>0$, $(1+\epsilon)u\leq \bar u$ in $\{c<|x|<\bar r\}$, violating $u(d)=\bar u(d)$.
	
	Fix a $\bar{d}\in(d,b)$, and let $\alpha=\frac{1}{2}(u(c)+\bar{u}(c))$.
	
	Case (i):If $\Gamma=\Gamma_2$, then apply Lemma \ref{radbehavior} to both $u$ and $\bar u$ to get
	\begin{align*}
	0\leq u(c)-u(\bar d)<\alpha-u(\bar d)<\bar u(c)-\bar{u}(\bar d)\leq 4\ln\frac{\bar d}{c}
	\end{align*}
	By Corollary \ref{gamma2rigid}, there exists a unique $C^2$ radially symmetric solution $v$ of $\lambda(A^u)\in\partial \Gamma$ in $\{c<|x|<\bar d\}$ satisfying $v(c)=\alpha$ and $v(\bar{d})=u(\bar{d})=\bar{u}(\bar{d})$.
	
	Case (ii): If $\Gamma\neq\Gamma_2$, then the existence of $v$ is given by Corollary \ref{nongamma2}.
	
	By comparison principle, $v\leq \bar{u}$ on $\{c<|x|<d\}$. On the other hand, since $u(c)<v(c)$ and $u(\bar{d})=v(\bar{d})$, we have, in view of the explicit form of radial solutions given by Theorem \ref{radialsol}, $u<v$ in $\{c<|x|<\bar d\}$. Thus, $u(d)<v(d)\leq\bar{u}(d)$, contradiction.
\end{proof}

A consequence of the above lemma is the following comparison type result.

\begin{coro}\label{comp3}
	Let $\Gamma=\Gamma_p$, where $1<p\leq 2$. For $0\leq a<b<\infty$, let $u\in C^0(\{a\leq|x|\leq b\})$, $\bar{u}\in LSC(\{a\leq|x|\leq b\})$ be radially symmetric and satisfy respectively $\lambda(A^u)\in\partial\Gamma$ and $\lambda(A^{\bar u})\in\bar\Gamma$ in $\{a<|x|<b\}$ in the viscosity sense. Assume that $u(b)\leq\bar{u}(b)$ and $u(d)\geq \bar{u}(d)$ for some $a<d<b$. Then
	\begin{align*}
	\bar{u}\leq u\text{ in }\{a<|x|<d\}
	\end{align*}
	Moreover, if $u(b)<\bar u(b)$, then $\bar u<u$ in $(a,d)$.
\end{coro}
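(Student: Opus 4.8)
The plan is to argue by contradiction, using the comparison principle Lemma \ref{Comparison-2}, the classification and interpolation results for radial solutions (Theorem \ref{radialsol} and Corollaries \ref{gamma2rigid}, \ref{nongamma2}), the monotonicity of radial supersolutions (Lemma \ref{radbehavior}), and the strong maximum principle Lemma \ref{comp2}.

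Suppose the first assertion fails, so $\bar u(\rho_0)>u(\rho_0)$ for some $\rho_0\in(a,d)$; since $d>0$ we have $\rho_0>0$, and the closed annulus $\{\rho_0\le|x|\le b\}$ avoids the origin. First I would produce a radial solution $v$ of $\lambda(A^v)\in\partial\Gamma$ on $\{\rho_0\le|x|\le b\}$ with $u(\rho_0)<v(\rho_0)\le\bar u(\rho_0)$ and $v(b)=u(b)$: when $1<p<2$ this is immediate from Corollary \ref{nongamma2}, and when $p=2$ one uses Corollary \ref{gamma2rigid}, the required compatibility $0\le v(\rho_0)-v(b)\le 4\ln(b/\rho_0)$ being available because $u(\rho_0)-u(b)<4\ln(b/\rho_0)$ for the non-exceptional radial solutions, while the exceptional case $u=-4\ln|x|+C$ is incompatible with the hypotheses by the monotonicity of $\bar u+4\ln|x|$ from Lemma \ref{radbehavior}. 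Since $v<\bar u$ on $\partial B_{\rho_0}$ and $v\le\bar u$ on $\partial B_b$, applying Lemma \ref{Comparison-2} to the subsolution $v-\delta$ and the supersolution $\bar u$ on $\{\rho_0<|x|<b\}$ (with $E=\emptyset$) and sending $\delta\to0^+$ gives $v\le\bar u$ on $\{\rho_0\le|x|\le b\}$. On the other hand $v$ and $u$ are two distinct radial solutions of the same equation with $v(\rho_0)>u(\rho_0)$ and $v(b)=u(b)$, so by the explicit formulas of Theorem \ref{radialsol} the difference $v-u$ is monotone in $|x|$, hence $v>u$ on $\{\rho_0\le|x|<b\}$; in particular $\bar u(d)\ge v(d)>u(d)$, contradicting $u(d)\ge\bar u(d)$. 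This proves $\bar u\le u$ on $\{a<|x|<d\}$.

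For the refinement, assume in addition $u(b)<\bar u(b)$ and suppose $\bar u(\rho_1)=u(\rho_1)$ for some $\rho_1\in(a,d)$. Let $v$ be the unique radial solution on $\{\rho_1\le|x|\le b\}$ with boundary values $u(\rho_1)$ and $u(b)$ (Corollary \ref{gamma2rigid} or \ref{nongamma2}); by uniqueness $v\equiv u$ on this annulus. Since $v(\rho_1)=\bar u(\rho_1)$ and $v(b)=u(b)<\bar u(b)$, the comparison principle (again via $v-\delta$, $\delta\to0^+$) gives $u=v\le\bar u$ on $\{\rho_1\le|x|\le b\}$. Together with the first assertion $\bar u\le u$ on $(\rho_1,d)$ and the hypothesis $u(d)\ge\bar u(d)$, this forces $\bar u\equiv u$ on $[\rho_1,d]$; hence $u\le\bar u$ on $(\rho_1,b)$ with equality on the nonempty subinterval $(\rho_1,d]$, so Lemma \ref{comp2} yields $\bar u\equiv u$ on $(\rho_1,b)$, and lower semicontinuity of $\bar u$ then gives $\bar u(b)\le u(b)$, contradicting $u(b)<\bar u(b)$. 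Therefore $\bar u<u$ on $(a,d)$.

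The step I expect to be most delicate is establishing that $v-u$ changes sign at most once, i.e.\ that two radial solutions on a common annulus cross at most once: this is precisely where Theorem \ref{radialsol} is needed, and one must treat separately the possibility that $v$ and $u$ lie on different branches of $\partial\Gamma$ (there the monotonicity of $v-u$ follows by directly differentiating the explicit formulas rather than from any single affine change of variables). Some additional routine care is required to verify the compatibility hypothesis of Corollary \ref{gamma2rigid} and to pass from strict to non-strict boundary inequalities when invoking Lemma \ref{Comparison-2}.
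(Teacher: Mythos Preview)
Your argument is correct, but it takes a noticeably longer route than the paper's. The paper simply applies the comparison principle directly to $u$ and $\bar u$: from $u(c)<\bar u(c)$ and $u(b)\le\bar u(b)$ (using $u-\epsilon$ and sending $\epsilon\to0^+$ to get strict boundary data, just as you do) one obtains $u\le\bar u$ on $[c,b]$, hence $u(d)\le\bar u(d)$, which together with the hypothesis forces $u(d)=\bar u(d)$; then Lemma~\ref{comp2} gives $u\equiv\bar u$ on $(c,b)$, contradicting $u(c)<\bar u(c)$. No auxiliary solution $v$ is needed, and the delicate ``crossing'' analysis you flag as the most uncertain step is entirely avoided.

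Your detour through an interpolating $v$ does work---the point being that two radial solutions agreeing on a subinterval must coincide globally (by the explicit formulas of Theorem~\ref{radialsol}, equivalently by analytic/ODE continuation), so $v(\rho_0)>u(\rho_0)$ and $v(b)=u(b)$ force $v>u$ on $[\rho_0,b)$. Your treatment of the exceptional case $u=-4\ln|x|+C$ for $p=2$ via Lemma~\ref{radbehavior} is also fine. The cost is that you invoke Corollaries~\ref{gamma2rigid}/\ref{nongamha2} and the classification in a place where the paper gets by with only Lemma~\ref{Comparison-2} and Lemma~\ref{comp2}. For the ``Moreover'' clause, which the paper's written proof does not spell out, your argument (comparison on $[\rho_1,b]$, then Lemma~\ref{comp2}, then lower semicontinuity at $b$) is exactly right and is essentially how the paper's method would finish as well.
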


\begin{proof}
	Assume the contrary that $u(c)<\bar u(c)$ for some $c\in(a,d)$. According to Theorem \ref{radialsol}, $u$ is a smooth function. We also know that $u(b)\leq \bar{u}(b)$. An application of comparison principle yields that
	\begin{align*}
	\bar{u}\geq u\text{ on }\bar{B}_b\backslash B_c
	\end{align*}
	In particular, $\bar{u}(d)\geq u(d)$. From the boundary condition, it follows that $\bar{u}(d)=u(d)$. Now by Lemma \ref{comp2}, we obtain $u\equiv\bar u$ on $\bar{B}_b\backslash B_c$, violating $u(c)<\bar u(c)$.
\end{proof}

\subsection{Removable singularity result}
\ 

We will need the following removable singularity result.
\begin{lemm}\label{removable}
	Let $\Gamma=\Gamma_p$, where $1<p\leq 2$. Let $u\in LSC(B_1\backslash\{0\})$ be a viscosity solution of $\lambda(A^u)\in\bar\Gamma$ in $B_1\backslash\{0\}$. Then $u$, with $u(0)=\liminf_{x\rightarrow 0}u(x)$, is a function in $LSC(B_1)$ satisfying $\lambda(A^u)\in\bar\Gamma$ in $B_1$ in the viscosity sense.
\end{lemm}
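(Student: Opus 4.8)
The plan is to show that the extended function, which I will still call $u$, retains the supersolution property at the origin. The key observation is that $u(0) = \liminf_{x\to 0} u(x)$ is finite: this follows from Proposition \ref{Asymptotic behavior-1}' (the rescaled form of Proposition \ref{Asymptotic behavior-1}), which applies since $u$ is a viscosity supersolution of $\lambda(A^u)\in\bar\Gamma$ in $B_{r_0}\backslash\{0\}$ and gives $\liminf_{x\to 0} u(x) > -\infty$. With this value assigned, $u\in LSC(B_1)$ is immediate from lower semicontinuity away from $0$ together with the definition of $u(0)$ as the lower limit. It then remains to check the viscosity supersolution inequality only at the point $x_0 = 0$, since at all other points it is inherited from the hypothesis.

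So suppose $\varphi\in C^2(B_1)$ touches $u$ from below at $0$, i.e.\ $(u-\varphi)(0)=0$ and $u-\varphi\ge 0$ near $0$; we must show $\lambda(A^\varphi(0))\in\bar\Gamma$. The idea is to perturb $\varphi$ and use the comparison principle Lemma \ref{Comparison-2}, exploiting that a single point has zero Newtonian capacity in $\mathbb{R}^2$. First I would note that, by subtracting a small multiple of $|x|^2$ and adding a constant, one may assume $u > \varphi$ on a small punctured sphere $\partial B_\rho\backslash\{0\}$ while $\varphi(0) = u(0)$ is the strict infimum; more precisely, for $\varepsilon>0$ small, $\varphi_\varepsilon := \varphi - \varepsilon|x|^2 + c_\varepsilon$ (with $c_\varepsilon\to 0$) satisfies $\varphi_\varepsilon < u$ on $\partial B_\rho$. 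If $\lambda(A^\varphi(0))\notin\bar\Gamma$, then by continuity $\lambda(A^{\varphi_\varepsilon})\notin\bar\Gamma$, in fact $\lambda(A^{\varphi_\varepsilon})\in\mathbb{R}^2\setminus\bar\Gamma$, in a whole neighborhood $B_\delta$ of $0$ for $\varepsilon$ small enough; this makes $\varphi_\varepsilon$ a classical (hence viscosity) subsolution of $\lambda(A^u)\in\partial\Gamma$ in $B_\delta$.

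Now I would apply Lemma \ref{Comparison-2} on $\Omega = B_\rho$ with $E = \{0\}$ (which has zero Newtonian capacity), the subsolution being $\varphi_\varepsilon$ and the supersolution being $u$ restricted to $B_\rho\backslash\{0\}$: we have $\inf_{B_\rho\backslash\{0\}} u > -\infty$ by Proposition \ref{Asymptotic behavior-1}', and $\varphi_\varepsilon < u$ on $\partial B_\rho$, so the conclusion $\inf_{B_\rho\backslash\{0\}}(u - \varphi_\varepsilon) > 0$ holds. But this is a contradiction: as $\varepsilon\to 0$ we have $\varphi_\varepsilon \to \varphi$ uniformly on $\bar B_\rho$, and $\inf_{B_\rho\backslash\{0\}}(u-\varphi) = 0$ since $(u-\varphi)(x_k)\to 0$ along a sequence $x_k\to 0$ realizing the liminf. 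Hence $\lambda(A^\varphi(0))\in\bar\Gamma$, as desired. The main obstacle I anticipate is the bookkeeping in the perturbation step — ensuring one can simultaneously arrange strict inequality on $\partial B_\rho$, keep $\lambda(A^{\varphi_\varepsilon})$ outside $\bar\Gamma$ near the origin, and not disturb the touching behavior in the limit; this is where the openness of $\mathbb{R}^2\setminus\bar\Gamma$ and the quadratic nature of the perturbation (affecting $\nabla^2\varphi_\varepsilon$ by a fixed amount $-2\varepsilon I$, and $\nabla\varphi_\varepsilon$, hence $A^{\varphi_\varepsilon}$, continuously) must be used carefully, but no serious analytic difficulty arises beyond what Lemma \ref{Comparison-2} already provides.
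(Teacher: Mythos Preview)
The paper's own proof is a one-line citation of Theorem 3.7 in \cite{LNW}; you instead argue directly from Proposition \ref{Asymptotic behavior-1}' and Lemma \ref{Comparison-2}. This is a genuinely different, self-contained route, and the core idea is sound: since $\{0\}$ has zero Newtonian capacity, the comparison principle in Lemma \ref{Comparison-2} can ``see across'' the puncture and rule out any test function touching from below at $0$ with $\lambda(A^\varphi(0))\notin\bar\Gamma$.

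Two loose ends in the execution. First, you apply Lemma \ref{Comparison-2} on $\Omega=B_\rho$, but $\varphi_\varepsilon$ has only been shown to be a subsolution on $B_\delta$; you must take $\rho\le\delta$ (choose $\rho$ after fixing the neighborhood on which $\lambda(A^\varphi)\in\mathbb{R}^2\setminus\bar\Gamma$). Second, your ``$\varepsilon\to 0$'' step is inconclusive as written: knowing $\inf_{B_\rho\setminus\{0\}}(u-\varphi_\varepsilon)>0$ for every $\varepsilon$ does not by itself contradict $\inf_{B_\rho\setminus\{0\}}(u-\varphi)=0$, since those positive infima could tend to zero. The contradiction is already present at a single fixed $\varepsilon$ (with $c_\varepsilon=0$): along $x_k\to 0$ with $u(x_k)\to u(0)$ one has $(u-\varphi_\varepsilon)(x_k)=(u-\varphi)(x_k)+\varepsilon|x_k|^2\to 0$, so $\inf_{B_\rho\setminus\{0\}}(u-\varphi_\varepsilon)=0$, contradicting the strict positivity from Lemma \ref{Comparison-2}. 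Finally, $u(0)$ need only be $>-\infty$ (this is all Proposition \ref{Asymptotic behavior-1}' gives); if $u(0)=+\infty$ there is no $C^2$ function touching from below and nothing to check.
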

\begin{proof}
	It is easy to see that $u$, with $u(0)=\liminf_{x\rightarrow 0}u(x)$ is in $LSC(B_1)$. This lemma is a special case of Theorem 3.7 in \cite{LNW}.
\end{proof}

\begin{prop}\label{removable2}
	Let $\Gamma=\Gamma_p$, where $1<p\leq 2$. Let $u\in LSC(B_1\backslash\{0\})\cap L_{loc}^\infty(B_1\backslash\{0\})$ be a function satisfying $\lambda(A^u)\in\bar\Gamma$ in $B_1\backslash\{0\}$ in the viscosity sense and
	\begin{align*}
	\liminf_{|x|\rightarrow 0}(u(x)+4\ln |x|)=-\infty.
	\end{align*}
	Then the function $u$ with $u(0)=\liminf_{|x|\rightarrow 0}u(x)$ is in $C_{loc}^{0,p-1}(B_1)$. Moreover, set $w=\exp(-\frac{p-1}{4}u)$, then
	\begin{align*}
	\|w\|_{C^{0,p-1}(B_{1/2})}\leq C(\Gamma)\max_{\partial B_{3/4}}w.
	\end{align*}
\end{prop}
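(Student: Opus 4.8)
\emph{Plan.} Throughout write $w=\exp(-\tfrac{p-1}{4}u)$, which is positive in $B_1\setminus\{0\}$ by the $L_{loc}^\infty$ assumption. Since $t\mapsto-\tfrac{4}{p-1}\ln t$ is locally bi-Lipschitz on $(0,\infty)$, the assertion is equivalent to the following statements about $w$: $w$ is bounded below near $0$ by a positive constant (so that $u$ is finite and continuous at $0$ and hence lies in $C_{loc}^{0,p-1}(B_1)$), and $\|w\|_{C^{0,p-1}(B_{1/2})}\le C(\Gamma)\max_{\partial B_{3/4}}w$. A direct computation gives $\lambda(A^u)=\tfrac{4}{p-1}\,w^{(5-p)/(p-1)}\,\lambda(M_w)$, where $M_w=\nabla^2 w+w^{-1}\big(\tfrac{3-p}{p-1}\nabla w\otimes\nabla w-\tfrac1{p-1}|\nabla w|^2 I\big)$; since $w>0$ and $\Gamma$ is a cone, $\lambda(A^u)\in\bar\Gamma$ is equivalent to $\lambda(M_w)\in\bar\Gamma$, which, because $w$ is a decreasing function of $u$, is a one-sided structural condition on $w$ (in the radial case it says exactly that $w$ is convex and satisfies a transversality inequality). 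So it suffices to prove the two-sided bound for $w$ near $0$ and its $C^{0,p-1}$ modulus there; away from $0$ the regularity is classical.

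\emph{Boundedness of the radialization and the lower bound for $u$.} By Proposition~\ref{Asymptotic behavior-1} in the form for a singularity at the origin, $\bar u(r):=\inf_{\partial B_r}u>-\infty$ for small $r$; moreover $\bar u$ is a radial viscosity supersolution of $\lambda(A^u)\in\bar\Gamma$ (rotation invariance of the equation plus stability of viscosity supersolutions under infima), so by Lemma~\ref{radbehavior}, and for $1<p<2$ by the corresponding monotonicity read off from the explicit radial profiles of Theorem~\ref{radialsol}, $\bar u$ is non-increasing and $\bar u+4\ln r$ is non-decreasing. The hypothesis $\liminf_{x\to0}(u+4\ln|x|)=-\infty$ forces $\bar u(r)+4\ln r\to-\infty$, and feeding this into the radial equation --- conveniently through $\phi:=-4/\bar u'$, for which $\lambda(A^{\bar u})\in\bar\Gamma$ yields $\phi\ge r$ and $(\phi-r)'\le 0$, hence $\phi(r)\ge r+\kappa$ with $\kappa>0$ (otherwise $\phi\equiv r$ and $\bar u+4\ln r$ would be constant) --- shows that $\bar u$ is in fact bounded near $0$, with $|\bar u(r)-\bar u(0^+)|\le C\,r^{p-1}$. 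This already gives $u(x)\ge\bar u(|x|)\ge-C'$, i.e. an upper bound for $w$; the quantitative form in terms of $\max_{\partial B_{3/4}}w$ follows by comparing $u$ from below, on each annulus $\{r<|x|<3/4\}$, with the radial solution matching the boundary infima of $u$, using Lemma~\ref{Comparison-2} and Corollaries~\ref{gamma2rigid}, \ref{nongamma2}, \ref{comp3}.

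\emph{Upper bound for $u$ near $0$ --- the main obstacle.} It remains to bound $\sup_{\partial B_r}u$ from above, equivalently to show $w(0)>0$. This is the delicate point, because $u$ is only assumed a supersolution and so cannot be bounded above by a comparison in which it plays the role of a subsolution; one must instead exploit the rigidity of $\lambda(A^u)\in\bar\Gamma$. For instance, the pure-power profile $u=-c\ln|x|$ is a supersolution of $\lambda(A^u)\in\bar\Gamma_p$ (with $1<p\le 2$) only for $c\in\{0,4\}$, since otherwise $\lambda(A^u)$ is a nonzero multiple of $(1,-1)\notin\bar\Gamma_p$; the critical profile $c=4$, for which $u+4\ln|x|$ is bounded and $A^u=0$, is excluded by the hypothesis, while $u\in L_{loc}^\infty(B_1\setminus\{0\})$ rules out slowly diverging or spike-type behaviour. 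To make this quantitative for non-radial $u$ I would first extend $u$ across $0$ as a supersolution on all of $B_1$ with $u(0)=\bar u(0^+)$ via Lemma~\ref{removable}, and then run a moving-sphere argument: for suitable centres $x_0$ and radii $\lambda$ one gets $u\ge u_{x_0,\lambda}$ from Lemma~\ref{Comparison-2}, exactly as in the proof of Theorem~\ref{dege1}, which forces $u$ to coincide to leading order with a bounded radial profile of type (c) of Theorem~\ref{radialsol} near $0$, thereby pinning $\sup_{\partial B_r}u$ from above. This is the heart of the matter and is the ``twist'' relative to the higher-dimensional treatment in \cite{LN1}, where the corresponding profile always extends across the singularity.

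\emph{Conclusion.} Once $u$ is two-sidedly bounded near $0$, on each dyadic annulus $\{2^{-k-1}\le|x|\le 2^{-k}\}$ I compare $u$ from above and below with the radial solutions of Theorem~\ref{radialsol} matching the boundary data; these are of type (c), $u=-\tfrac{4}{p-1}\ln(|x|^{p-1}+C_1)+C_2$, for which $w=\text{const}\cdot(|x|^{p-1}+C_1)$ is affine in $|x|^{p-1}$ and hence exactly $C^{0,p-1}$. This yields $|u(x)-u(0)|\le C|x|^{p-1}$ near $0$, so $u$ and $w$ are continuous at $0$ with $w(0)>0$; away from $0$ the Hölder estimate is classical. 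Translating back to $w=\exp(-\tfrac{p-1}{4}u)$ and carefully tracking the constants through these comparisons --- using that the governing radial profile is affine in $|x|^{p-1}$ and that the whole scheme is scale-invariant --- gives $\|w\|_{C^{0,p-1}(B_{1/2})}\le C(\Gamma)\max_{\partial B_{3/4}}w$, as claimed.
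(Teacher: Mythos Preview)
Your proposal misidentifies the main difficulty and, as a result, contains several gaps that would not close.

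First, the $\phi=-4/\bar u'$ argument for the radialization $\bar u(r)=\inf_{\partial B_r}u$ is illegitimate: $\bar u$ is only a radial viscosity \emph{supersolution} of $\lambda(A^u)\in\bar\Gamma$, not a smooth solution, so it need not be differentiable and does not satisfy any ODE you can manipulate. (Lemma~\ref{radbehavior} gives monotonicity of $\bar u$ and of $\bar u+4\ln r$ for $\Gamma=\Gamma_2$ via mollification, but that is very different from having $\bar u'$.) The correct way to bound $\bar u$ from above near $0$ is by a one-step radial comparison: since $\liminf_{r\to0}(\bar u(r)+4\ln r)=-\infty$, pick $0<r_1<3/4$ with $\bar u(r_1)+4\ln r_1<\bar u(3/4)+4\ln(3/4)$; then the type-(c) radial solution $\hat v(r)=-\tfrac{4}{p-1}\ln(r^{p-1}+C_1)+C_2$ matching $\bar u$ at $r_1$ and $3/4$ has $C_1>0$, and Corollary~\ref{comp3} forces $\bar u\le\hat v$ on $(0,r_1)$. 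This gives $u(0)=\liminf u<\infty$ immediately.

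Second, and more importantly, you label ``upper bound for $u$ near $0$'' as the main obstacle and propose a moving-sphere argument; this is both unnecessary and unworkable in the stated form (you only have a supersolution, so you cannot compare $u$ from above with anything, and your dyadic ``from above and below'' step suffers the same defect). The paper's mechanism bypasses this entirely. Once $u(0)<\infty$ and, by Lemma~\ref{removable} and superharmonicity, $c:=\inf_{B_{3/4}}u=\min_{\partial B_{3/4}}u>-\infty$, fix any $\bar x\in B_{1/2}$ and set
\[
\xi_{\bar x}(x)=-\tfrac{4}{p-1}\ln\bigl(4^{p-1}|x-\bar x|^{p-1}+b\bigr)+c,\qquad b=\exp\bigl(-\tfrac{p-1}{4}(u(\bar x)-c)\bigr),
\]
a type-(c) radial solution centered at $\bar x$ with $\xi_{\bar x}(\bar x)=u(\bar x)$ and $\xi_{\bar x}\le c$ on $\partial B_{3/4}$. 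A single application of the comparison principle (Lemma~\ref{Comparison-2}) yields $u\ge\xi_{\bar x}$ in $B_{3/4}$, which in the variable $w=\exp(-\tfrac{p-1}{4}u)$ reads $w(x)-w(\bar x)\le 4^{p-1}|x-\bar x|^{p-1}\max_{\partial B_{3/4}}w$. Swapping $x$ and $\bar x$ gives the two-sided H\"older bound directly. The continuity of $w$ at $0$ (together with $u(0)<\infty$, i.e.\ $w(0)>0$) then \emph{implies} the upper bound on $u$; it is a consequence, not a hypothesis to be established separately.

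In short: replace the $\phi$-ODE step by the radial comparison via Corollary~\ref{comp3}, and replace the moving-sphere/dyadic scheme by the single touching barrier $\xi_{\bar x}$; the whole argument then goes through with only one-sided comparisons, as befits a supersolution.
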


\begin{proof}
	By Lemma \ref{removable}, $\lambda(A^u)\in\bar\Gamma$ in viscosity sense. Let $v(x)=v(|x|)=\min_{\partial B_{|x|}}u$. Then $\lambda(A^v)\in\bar\Gamma$ in $B_1$ in the viscosity sense, hence $v$ is superharmonic. It follows that $v$ is nonincreasing.
	
	By the hypothesis, $\liminf_{r\rightarrow 0}(v(r)+4\ln r)=-\infty$, hence there exists $0<r_1<3/4$ such that
	\begin{align*}
	v(r_1)+4\ln r_1<v(\frac{3}{4})+4\ln\frac{3}{4}.
	\end{align*}
	Thus,  there exists $C_1>0$ and $C_2$ such that the function
	\begin{align*}
	\hat{v}(r)=-\frac{4}{p-1}\ln(r^{p-1}+C_1)+C_2
	\end{align*}
	satisfies $\hat{v}(r_1)=v(r_1)$ and $\hat{v}(3/4)=v(3/4)$. By Theorem \ref{radialsol}, $\lambda(A^{\hat{v}})\in\partial\Gamma$ in $B_1\backslash\{0\}$. By Corollary \ref{comp3}, we have $v\leq \hat{v}$ in $(0,r_1)$. In particular, $v$ is bounded from above at the origin, and
	\begin{align*}
	u(0)=\liminf_{|x|\rightarrow 0}u=\liminf_{r\rightarrow 0}v(r)<\infty.
	\end{align*}
	By Lemma \ref{removable}, $\lambda(A^u)\in\bar\Gamma$ in the viscosity sense. By the superharmonicity of u,
	\begin{align*}
	c:=\inf_{B_{3/4}}u=\min_{\partial B_{3/4}}u>-\infty.
	\end{align*}
	For $\bar{x}\in B_{1/2}$, consider
	\begin{align*}
	\xi_{\bar{x}}(x):=-\frac{4}{p-1}\ln(4^{p-1}|x-\bar{x}|^{p-1}+b)+c,
	\end{align*}
	where $b>0$ satisfies
	\begin{align}\label{formulab}
	\xi_{\bar{x}}(\bar{x})=-\frac{4}{p-1}\ln b+c=u(\bar{x}).
	\end{align}
	We will show that
	\begin{align}\label{ugeqksi}
	u\geq\xi_{\bar{x}} \quad\text{in }B_{3/4}.
	\end{align}
	It is easy to see that
	\begin{align*}
	\xi_{\bar{x}}(x)\leq -4\ln(4|x-\bar{x}|)+c\leq c \text{ for all }x\in\partial B_{3/4}.
	\end{align*}	
	Also, since $\xi_{\bar{x}}(\bar{x})=u(\bar{x})$, for any $\epsilon>0$, there exists $0<\delta<\frac{1}{8}$ such that
	\begin{align*}
	\xi_{\bar{x}}(x)-\epsilon\leq u\text{ in }B_\delta(\bar{x}).
	\end{align*}
	Since by Theorem \ref{radialsol},
	\begin{align*}
	\lambda(A^{\xi_{\bar{x}}-\epsilon})\in\partial\Gamma\text{ in }B_{3/4}\backslash\{\bar{x}\},
	\end{align*}
	and
	\begin{align*}
	\xi_{\bar{x}}-\epsilon\leq u\text{ on }\partial(B_{3/4}\backslash B_\delta(\bar{x})),
	\end{align*}
	we can apply comparison principle to get
	\begin{align*}
	\xi_{\bar{x}}-\epsilon\leq u\text{ in }B_{3/4}\backslash B_\delta(\bar{x}).
	\end{align*}
	Therefore,
	\begin{align*}
	\xi_{\bar{x}}-\epsilon\leq u\text{ in }B_{3/4}.
	\end{align*}
	Sending $\epsilon\rightarrow 0$, we obtain $u\geq\xi_{\bar{x}}$ in $B_{3/4}$.
	
	Now set
	\begin{align*}
	w(x)=\exp(-\frac{p-1}{4}u(x)),
	\end{align*}
	then it follows that, using (\ref{formulab}) and (\ref{ugeqksi}),
	\begin{align*}
	w(x)-w(\bar{x})\leq 4^{p-1}{|x-\bar{x}|^{p-1}}\max_{\partial B_{3/4}}w\text{ for all }x,\bar{x}\in B_{1/2}.
	\end{align*}
	Switching role of $x$ and $\bar{x}$ we obtain
	\begin{align*}
	|w(x)-w(\bar{x})|\leq 4^{p-1}{|x-\bar{x}|^{p-1}}\max_{\partial B_{3/4}}w\text{ for all }x,\bar{x}\in B_{1/2}.
	\end{align*}
	which proves the result.
\end{proof}

\subsection{Proof of B\^ocher type theorems}
\

\subsubsection{The case $\Gamma=\Gamma_2$: proof of Theorem \ref{Bocher1}}
\

\begin{proof}
	We write $v(x)=v(|x|)=\min_{\partial B_{|x|}}u$. Then $v$ belongs to $LSC(B_1\backslash\{0\})\cap L_{loc}^{\infty}(B_1\backslash\{0\})$ and satisfies $\lambda(A^v)\in\bar\Gamma$ in $B_1\backslash\{0\}$ in the viscosity sense. 
	
	We claim that
	\begin{align*}
	\text{either }v(x)=-4\ln |x|+C\text{ for some constant }C\text{ or }\sup_{B_{1/2}\backslash\{0\}}v<\infty.
	\end{align*}
	Indeed, if the first alternative does not hold, we can find $0<r_1<r_2<1$ such that
	\begin{align*}
	v(r_1)+4\ln r_1\neq v(r_2)+4\ln r_2.
	\end{align*}
	By Lemma \ref{radbehavior}, 
	\begin{align*}
	v(r_2)\leq v(r_1)<4(\ln r_2-\ln r_1)+v(r_2).
	\end{align*}
	Therefore, we can find function
	\begin{align*}
	\hat{v}(r)=-\frac{4}{p-1}\ln(r^{p-1}+C_1)+C_2
	\end{align*}
	satisfying $\hat{v}(r_1)=v(r_1)$ and $\hat{v}(r_2)=v(r_2)$, $C_1>0$. By Theorem \ref{radialsol}, $\lambda(A^{\hat{v}})\in\partial\Gamma$ in $B_1\backslash\{0\}$. By Corollary \ref{comp3}, we have $v\leq\hat{v}$ in $(0,r_1)$. In particular, $v$ is bounded near the origin. The claim is proved.
	
	If the first alternative holds, we have $u\geq v$ in $B_1\backslash\{0\}$, $\Delta u\leq0=\Delta v$ in $B_1\backslash\{0\}$, and the set $\{x\in B_1\backslash\{0\}:u=v\}$ is non-empty. By strong maximum principle, $u\equiv v$ and the conclusion follows.
	
	If the second alternative holds, then conclusion follows from Proposition \ref{removable2}.
\end{proof}

\subsubsection{The case $\Gamma=\Gamma_p$, $1<p<2$: proof of Theorem \ref{Bocher2}}
\ 

Now we consider the case $\Gamma\neq\Gamma_2$. By Proposition \ref{removable2}, it suffices to assume
\begin{align*}
\liminf_{|x|\rightarrow 0}(u(x)+4\ln|x|)>-\infty.
\end{align*}

\begin{lemm}\label{bocher2}
	Let $\Gamma=\Gamma_p$ for some $1<p<2$. Let $u\in C_{loc}^{0,1}(B_1\backslash\{0\})$ be a viscosity solution of
	\begin{align*}
	\lambda(A^u)\in\partial\Gamma
	\end{align*}
	in $B_1\backslash\{0\}$, and $u$ satisfy $\liminf_{|x|\rightarrow 0}(u(x)+4\ln|x|)>-\infty$.
	Then
	\begin{align*}
	\lim_{|x|\rightarrow 0}(u(x)+4\ln|x|)
	\end{align*}
	exists and is a finite number.
\end{lemm}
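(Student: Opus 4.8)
The plan is to reduce to the radially symmetric case, to show that the spherical minimum of $u$ corrected by $4\ln|x|$ has a limit, and then to identify that limit by a blow-down argument based on the Liouville theorem Theorem~\ref{dege1}. First I would set $m(r):=\min_{\partial B_r}u$ and $M(r):=\max_{\partial B_r}u$ for $0<r<1$. Since $\lambda(A^{\,\cdot\,})$ is invariant under rotations, $m$ (an infimum of rotations of the supersolution $u$) is a radially symmetric viscosity supersolution and $M$ (a supremum of rotations of the subsolution $u$) a radially symmetric viscosity subsolution of $\lambda(A^{\,\cdot\,})\in\partial\Gamma$ in $B_1\setminus\{0\}$, both continuous because $u\in C^{0,1}_{loc}(B_1\setminus\{0\})$. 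Writing $a(r):=m(r)+4\ln r$, the hypothesis $\liminf_{|x|\to0}(u(x)+4\ln|x|)>-\infty$ provides constants $C_0$ and $\rho_0\in(0,1)$ with $a(r)\ge -C_0$ for $0<r<\rho_0$.

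Next I would prove that $\ell:=\lim_{r\to0}a(r)$ exists in $(-C_0,+\infty]$, the key being the rigidity statement: if $0<r_3<r_4<\rho_0$ and $a(r_3)=a(r_4)$, then $a\ge a(r_3)$ on $(r_3,r_4)$. To see this, let $\tilde w$ be the unique radially symmetric solution of $\lambda(A^{\tilde w})\in\partial\Gamma$ on $\{r_3\le|x|\le r_4\}$ with $\tilde w=m$ on $\partial B_{r_3}\cup\partial B_{r_4}$, furnished by Corollary~\ref{nongamma2}. Then $\tilde w(r_3)-\tilde w(r_4)=m(r_3)-m(r_4)=4\ln(r_4/r_3)$, which is exactly the borderline case in Corollary~\ref{nongamma2}, so by Theorem~\ref{radialsol} one has $\tilde w=-4\ln r+a(r_3)$, i.e. $\tilde w+4\ln r\equiv a(r_3)$. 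Applying the comparison principle Lemma~\ref{Comparison-2} to $\tilde w-\epsilon$ and $m$ on $\{r_3<|x|<r_4\}$ and letting $\epsilon\to0$ gives $\tilde w\le m$ there, hence $a\ge a(r_3)$ on $(r_3,r_4)$. Given this, an elementary argument finishes the existence: if $\liminf a<\limsup a$, pick a level $c$ strictly between them which $a$ (being continuous) attains along some $r_j\to0$; the rigidity then forces $a\ge c$ on a whole interval $(0,r_1)$, contradicting $\liminf a<c$. Thus $\ell\ge -C_0>-\infty$ exists.

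The crux is ruling out $\ell=+\infty$; this is exactly where the hypothesis $1<p<2$ is essential (the statement is false for $p=1$, where $\lambda(A^u)\in\partial\Gamma$ is just $\Delta u=0$). Suppose $\ell=+\infty$, and fix $r_2\in(0,\rho_0)$ and $\rho\in(0,r_2)$. For $r_1$ small one has $a(r_1)>a(r_2)$, equivalently $m(r_1)-m(r_2)>4\ln(r_2/r_1)$, so by Theorem~\ref{radialsol} and the explicit formulas in the proof of Corollary~\ref{nongamma2} the radially symmetric solution $w^{r_1}$ on $\{r_1\le|x|\le r_2\}$ matching $m$ on the two boundary spheres is of type~(d): $w^{r_1}(r)=\tfrac{4(2-p)}{p-1}\ln\!\big(r^{-(p-1)/(2-p)}-C_1^{r_1}\big)+C_2^{r_1}$ with $0<C_1^{r_1}<r_2^{-(p-1)/(2-p)}$. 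Tracking those formulas as $r_1\to0$ (using $a(r_1)\to+\infty$), one finds $C_1^{r_1}\to r_2^{-(p-1)/(2-p)}$, hence $C_2^{r_1}\to+\infty$, so $w^{r_1}(\rho)\to+\infty$. On the other hand, comparison gives $w^{r_1}\le m$ on $\{r_1<|x|<r_2\}$, whence $w^{r_1}(\rho)\le m(\rho)<\infty$ for all small $r_1$ — a contradiction. Therefore $\ell$ is a finite number.

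It remains to upgrade $\lim a(r)=\ell$ to $\lim_{x\to0}(u(x)+4\ln|x|)=\ell$. For any $\mu_j\to0^+$, the functions $u^{\mu_j}(y):=u(\mu_j y)+4\ln\mu_j$ solve $\lambda(A^{u^{\mu_j}})\in\partial\Gamma$ in $B_{1/\mu_j}\setminus\{0\}$ and satisfy $\min_{\partial B_\rho}u^{\mu_j}=a(\mu_j\rho)-4\ln\rho\to\ell-4\ln\rho$. By the local gradient estimate Theorem~\ref{grad3} (rescaled to annuli), $|\nabla u^{\mu_j}|$ is bounded uniformly on compact subsets of $\mathbb{R}^2\setminus\{0\}$; together with the bound on the spherical minima this makes $\{u^{\mu_j}\}$ precompact in $C^0_{loc}(\mathbb{R}^2\setminus\{0\})$, and a diagonal subsequence converges to a viscosity solution $u^0$ of $\lambda(A^{u^0})\in\partial\Gamma$ in $\mathbb{R}^2\setminus\{0\}$ with $\min_{\partial B_\rho}u^0=\ell-4\ln\rho$ for every $\rho>0$. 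By the Liouville theorem Theorem~\ref{dege1}, $u^0$ is radially symmetric and monotone in $|x|$, so $u^0(\rho)=\ell-4\ln\rho$, i.e. $u^0\equiv-4\ln|x|+\ell$; consequently $M(\mu_j\rho)+4\ln(\mu_j\rho)=\max_{\partial B_\rho}u^{\mu_j}+4\ln\rho\to u^0(\rho)+4\ln\rho=\ell$, and since $\ell$ is independent of the subsequence, $\lim_{r\to0}(M(r)+4\ln r)=\ell$. Squeezing $m(|x|)\le u(x)\le M(|x|)$ then gives $u(x)+4\ln|x|\to\ell$ as $x\to0$. The one genuinely hard step is the third: excluding $\ell=+\infty$, which rests on the precise classification of radially symmetric solutions near the origin (Theorem~\ref{radialsol}) combined with a careful use of the comparison principle.
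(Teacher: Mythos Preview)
Your argument is correct, and the blow-down step via Theorem~\ref{dege1} and the gradient estimate Theorem~\ref{grad3} mirrors the paper's. The real difference is in how you handle the spherical minimum $m(r)=\min_{\partial B_r}u$. The paper constructs, for each $0<\rho<\rho_1$ and $j\ge1$, a single family of radial comparison functions $w_{\rho,j}$ of type~(d) (with $w_{\rho,j}(\rho)=m(\rho)+1/j$, $w_{\rho,j}(\rho_1)=m(\rho_1)$), shows $m\le w_{\rho,j}$ on $(0,\rho)$ by a one-shot contradiction with the comparison principle, and then reads off directly
\[
\limsup_{r\to0}(m(r)+4\ln r)\ \le\ \lim_{r\to0}(w_{\rho,j}(r)+4\ln r)\ =\ b_{\rho,j}\ \xrightarrow[\rho\to0]{}\ \liminf_{\rho\to0}(m(\rho)+4\ln\rho)+\tfrac1j.
\]
This kills both the existence of the limit and its finiteness in one stroke. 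You instead split the task: a level-set rigidity argument using the borderline solution $-4\ln r+c$ (type~(b)) to force $\lim a(r)$ to exist in $[-C_0,+\infty]$, and then a separate comparison with type~(d) solutions on shrinking inner radii to rule out $+\infty$. Both approaches rest on the classification Theorem~\ref{radialsol} and on Lemma~\ref{Comparison-2}; yours is more modular and perhaps more transparent about where each piece of the classification is used, while the paper's is more economical.
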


\begin{proof}
	We still define that
	\begin{align*}
	v(r)=\min_{\partial B_r}u.
	\end{align*}
	Then $v$ is superharmonic in $B_1\backslash\{0\}$. Since $\{0\}$ has zero Newtonian capacity, $v$ is superharmonic in $B_1$. In particular, $v$ is non-increasing.
	
	Fix some $0<\rho_1<1$ and for $0<\rho<\rho_1$, $j\in \mathbb{N}_{+}$, let $w_{\rho,j}$ be the radially symmetric function which is of the form
	\begin{align*}
	w_{\rho,j}(r)=\frac{4(2-p)}{p-1}\ln(r^{1-\frac{1}{2-p}}-a_{\rho,j})+b_{\rho,j}
	\end{align*}
	in $B_1\backslash\{0\}$ such that $w_{\rho,j}(\rho)=v(\rho)+\frac{1}{j}$ and $w_{\rho,j}(\rho_1)=v(\rho_1)$. After solving the equation, we obtain that
	\begin{align*}
	a_{\rho,j}&=\frac{-\rho^{-(p-1)/(2-p)}+{\rho_1}^{-(p-1)/(2-p)}e^{\frac{p-1}{4(2-p)}(w_\rho(\rho)-w_\rho(\rho_1))}}{e^{\frac{p-1}{4(2-p)}(w_\rho(\rho)-w_\rho(\rho_1))}-1},\\
	b_{\rho,j}&=w_{\rho,j}(\rho)-\frac{4(2-p)}{p-1}\ln\frac{\rho^{-(p-1)/(2-p)}-\rho_1^{-(p-1)/(2-p)}}{1-e^{-\frac{p-1}{4(2-p)}(w_\rho(\rho)-w_\rho(\rho_1))}}.
	\end{align*}
	Claim: $w_{\rho,j}(r)\geq v(r)$ for all $0<r<\rho$.
	
	In fact, by Theorem \ref{radialsol}, we know if $a_{\rho,j}\geq 0$ then $\lambda(A^{w_{\rho,j}})\in\partial\Gamma$, if $a_{\rho,j}<0$ then by direct computation, $\lambda(A^{w_{\rho,j}})\in\mathbb{R}^2\backslash\bar\Gamma$. Therefore, $w_{\rho,j}$ is a subsolution. If $w_{\rho,j}(s)<v(s)$ for some $s<\rho$, then the comparison principle implies that $w_{\rho,j}(r)\leq v(r)$ for $s<r<\rho_1$, which implies in particular that $w_{\rho,j}(\rho)\leq v(\rho)$, contradicting our choice of $w_{\rho,j}(\rho)$. The claim is proved.
	
	It follows that
	\begin{align*}
	\limsup_{r\rightarrow 0}(v(r)+4\ln r)\leq\limsup_{r\rightarrow 0}(w_{\rho,j}(r)+4\ln r)=b_{\rho,j}\text{ for all }0<\rho<\rho_1\text{ and }j\in \mathbb{N}_{+}.
	\end{align*}
	In particular,
	\begin{align*}
	\limsup_{r\rightarrow 0}(v(r)+4\ln r)<+\infty.
	\end{align*}
	Since $\liminf_{|x|\rightarrow 0}(u(x)+4\ln|x|)>-\infty$, we obtain
	\begin{align*}
	\liminf_{\rho\rightarrow 0}b_{\rho,j}&=\liminf_{\rho\rightarrow 0}(v(\rho)+\frac{1}{j}-\frac{4(2-p)}{p-1}\ln\frac{\rho^{-(p-1)/(2-p)}-\rho_1^{-(p-1)/(2-p)}}{1-e^{-\frac{p-1}{4(2-p)}(w_\rho(\rho)-w_\rho(\rho_1))}})\\
	&=\liminf_{\rho\rightarrow 0}(v(\rho)+4\ln\rho)+\frac{1}{j}>-\infty.
	\end{align*}
	Therefore,
	\begin{align*}
	\limsup_{r\rightarrow 0}(v(r)+4\ln r)\leq\liminf_{\rho\rightarrow 0}(v(\rho)+4\ln\rho)+\frac{1}{j}.
	\end{align*}
	Send $j\rightarrow\infty$, it follows that $\lim_{r\rightarrow 0}(v(r)+4\ln r)$ exists and is finite.
	
	We thus have
	\begin{align*}
	a:=\liminf_{|x|\rightarrow 0}(u(x)+4\ln|x|)=\lim_{r\rightarrow 0}(v(r)+4\ln r).
	\end{align*}
	We next claim that
	\begin{align*}
	A:=\limsup_{|x|\rightarrow 0}(u(x)+4\ln|x|)\text{ is finite.}
	\end{align*}
	To prove the claim, for $0<r<1/4$, let
	\begin{align*}
	u_r(y)=u(ry),\quad \frac{1}{2}<|y|<2.
	\end{align*}
	Then $u_r$ satisfies $\lambda(A^{u_r})\in\partial\Gamma$ in $\{1/2<|y|<2\}$. Thus, by Theorem \ref{grad3},
	\begin{align*}
	\max_{\partial B_1}u_r\leq C+\min_{\partial B_1}u_r,
	\end{align*}
	where $C$ depends only on $n$. Equivalently,
	\begin{align}\label{Harnack}
	\max_{\partial B_r}u\leq C+\min_{\partial B_r}u.
	\end{align}
	The claim follows from above.
	
	By Proposition \ref{Asymptotic behavior-1}, we know $a\geq 0$. Moreover, (\ref{Harnack}) implies that $A\leq a+C<\infty$.
	
	Next we show $A=a$. Assume by contradiction that $A>a$. Then for some $\epsilon>0$, we can find a sequence $x_j\rightarrow 0$ such that
	\begin{align*}
	u(x_j)+4\ln|x_j|\geq a+2\epsilon.
	\end{align*}
	Furthermore, we can assume that
	\begin{align*}
	\min_{\partial B_{|x_j|}}u+4\ln|x_j|=v(x_j)+4\ln|x_j|\leq a+\epsilon.
	\end{align*}
	Define
	\begin{align*}
	u_j(y)=u(\frac{y}{R_j})+4\ln\frac{1}{R_j},\quad |y|<R_j=|x_j|^{-1}\rightarrow\infty.
	\end{align*}
	Then we have
	\begin{align*}
	\lambda(A^{u_j})\in\partial \Gamma &\text{ in }B_{R_j}\backslash\{0\},\\
	\min_{\partial B_1}u_j\leq a+\epsilon &\text{ and }\max_{\partial B_1}u_j\geq a+2\epsilon.
	\end{align*}
	Since $\min_{\partial B_1}u_j$ is bounded, we can apply Theorem \ref{grad3} to obtain the boundedness of $u_j$ and $|\nabla u_j|$ on every compact subset of $\mathbb{R}^2\backslash\{0\}$. By the Ascoli-Arzela theorem, after passing to a subsequence, $u_j$ converges uniformly on compact subset of $\mathbb{R}^2\backslash\{0\}$ to some locally Lipschitz function $u_{*}$. Furthermore, $u_{*}$ satisfies the equation $\lambda(A^{u_{*}})\in\partial\Gamma$ in the viscosity sense.
	
	By Theorem \ref{dege1}, $u_{*}$ is a radially symmetric function, i.e. $u_{*}(y)=u_{*}(|y|)$. This results in a contradiction to $\min_{\partial B_1}u_j\leq a+\epsilon$, $\max_{\partial B_1}u_j\geq a+2\epsilon$ and convergence of $u_j$ to $u_{*}$. We conclude that $A=a$.
\end{proof}

\begin{lemm}\label{bocher3}
	Let $\Gamma=\Gamma_p$ for some $1<p<2$. Let $u\in C_{loc}^{0,1}(B_1\backslash\{0\})$ be a viscosity solution of
	\begin{align*}
	\lambda(A^u)\in\partial\Gamma \text{ in }B_1\backslash\{0\},
	\end{align*}and
	$u$ satisfies $\liminf_{|x|\rightarrow 0}(u(x)+4\ln|x|)>-\infty$.
	Then
	\begin{align*}
	u=\dfrac{4(2-p)}{p-1}\ln(r^{-(p-1)/(2-p)}+\mathring w)+a,
	\end{align*}
	where $\mathring w$ is a nonpositive function in $L_{loc}^{\infty}(B_1)$, and $a=\sup_{B_1\backslash\{0\}}(u(x)+4\ln |x|)<+\infty$. Moreover,
	\begin{align}\label{wmaxprin}
	\min_{\partial B_r}\mathring  w\leq \mathring w\leq \max_{\partial B_r} \mathring w \text{ in }B_r\backslash\{0\},\quad \forall\ 0<r<1.
	\end{align}
\end{lemm}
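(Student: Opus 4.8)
The plan is to read off the representation from the radial minorant $v(r):=\min_{\partial B_r}u$ and a comparison against the explicit radial solutions of Theorem \ref{radialsol}. By Lemma \ref{bocher2} the limit $a_0:=\lim_{|x|\to0}(u(x)+4\ln|x|)$ exists and is finite. Since $\lambda(A^u)\in\bar\Gamma\subset\bar\Gamma_1$, $u$ is superharmonic, so $v$ is a radial superharmonic function, i.e.\ concave as a function of $t=\ln r$; as $v+4t\to a_0$ when $t\to-\infty$ and a concave function with a finite limit at $-\infty$ has everywhere nonpositive slope, we obtain $v(r)+4\ln r\le a_0$ for all $0<r<1$. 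The crux is to upgrade this sphere-minimum bound to the pointwise bound $u(x)+4\ln|x|\le a_0$ on $B_1\setminus\{0\}$: one uses that the rescalings $u_r(y):=u(ry)+4\ln r$ converge uniformly on $\{\tfrac12<|y|<2\}$ to the radial solution $a_0-4\ln|y|$ (so $\operatorname{osc}_{\partial B_r}u\to0$) together with a strong maximum principle for the M\"obius-invariant equation — were $u-(-4\ln|x|+a_0)$ positive at an interior point, an interior maximum would force $u\equiv-4\ln|x|+a_0$, contradicting $u+4\ln|x|\to a_0$. Hence $a:=\sup_{B_1\setminus\{0\}}(u+4\ln|x|)=a_0<+\infty$. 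We then set $\mathring w:=\exp\!\big(\tfrac{p-1}{4(2-p)}(u-a)\big)-|x|^{-(p-1)/(2-p)}$, so that $u+4\ln|x|\le a$ forces $\mathring w\le0$ and the asserted formula for $u$ holds by construction.

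It remains to prove (\ref{wmaxprin}). Fix $0<r<1$ and put $M:=\max_{\partial B_r}\mathring w\le0$, attained at $x_*\in\partial B_r$. From $M=\exp(\tfrac{p-1}{4(2-p)}(u(x_*)-a))-r^{-(p-1)/(2-p)}$ one gets $r^{-(p-1)/(2-p)}>-M$, hence $r<(-M)^{-(2-p)/(p-1)}$ when $M<0$, and by Theorem \ref{radialsol}(b),(d) the radial function
\[
U_M(s):=\frac{4(2-p)}{p-1}\ln\!\big(s^{-(p-1)/(2-p)}+M\big)+a
\]
solves $\lambda(A^{U_M})\in\partial\Gamma$ on $(0,r]$ and satisfies $U_M(s)+4\ln s\to a$ as $s\to0$. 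On $\partial B_r$ the inequality $\mathring w\le M$ reads $u\le U_M$, and near the origin $u-U_M\to a_0-a=0$, so for each $\epsilon>0$ one has $u<U_M+\epsilon$ on $\partial B_\delta$ for all small $\delta$ and on $\partial B_r$; the comparison principle Lemma \ref{Comparison-2} on the annulus $\{\delta<|x|<r\}$ (with supersolution $U_M+\epsilon$, bounded below there) then gives $u<U_M+\epsilon$ in $\{\delta<|x|<r\}$, and letting $\delta\to0$ and then $\epsilon\to0$ yields $u\le U_M$ in $B_r\setminus\{0\}$, i.e.\ $\mathring w\le M$. The lower bound $\mathring w\ge m:=\min_{\partial B_r}\mathring w$ is proved symmetrically, comparing the radial subsolution $U_m$ with $u$ from below; here it is essential that $U_m+4\ln|x|\to a=a_0$ matches the singular profile of $u$ at $0$.

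Finally, specializing (\ref{wmaxprin}) to a fixed $r_0<1$ bounds $\mathring w$ between $\min_{\partial B_{r_0}}\mathring w$ and $0$ on $B_{r_0}\setminus\{0\}$, while on $\{r_0\le|x|\le r_1\}$, $r_0<r_1<1$, $\mathring w$ is continuous hence bounded; thus $\mathring w\in L^\infty_{loc}(B_1)$, and the representation formula is the definition of $\mathring w$. I expect the genuine difficulty to lie in the first paragraph: passing from control of $\min_{\partial B_r}u$ to control of $u$ itself — equivalently, proving $a=a_0$ — since only then do the comparison functions $U_m$ share the singular behavior of $u$ at the origin, which is exactly what makes the lower half of (\ref{wmaxprin}), and the local boundedness of $\mathring w$, go through.
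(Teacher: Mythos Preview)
Your comparison argument for (\ref{wmaxprin}) is exactly the paper's: your barriers $U_M+\epsilon$ and $U_m-\epsilon$ are the paper's $v_{\epsilon,r}^{\pm}$, and both of you match them to $u$ on $\partial B_r$ and near the puncture via Lemma~\ref{bocher2}, then invoke Lemma~\ref{Comparison-2} on annuli and send $\delta\to0$, $\epsilon\to0$.

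You are also right that the crux is $a=a_0$. The paper glosses over this: its assertion ``by Lemma~\ref{bocher2}, $v_{\epsilon,r}^{-}<u$ in $B_\delta\setminus\{0\}$'' needs $a-\epsilon<a_0$, i.e.\ $a\le a_0$, and the $L^\infty_{loc}$ claim for $\mathring w$ fails outright if $a>a_0$ since then $\mathring w(x)\sim -c\,|x|^{-(p-1)/(2-p)}\to-\infty$. So your diagnosis is sharper than the paper's write-up.

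However, your own sketch for $a=a_0$ does not close. The concavity step is fine: $u$ is superharmonic, $-4\ln|x|$ is harmonic, so $h:=u+4\ln|x|-a_0$ is superharmonic with $h\to0$ at $0$, and hence $\min_{\partial B_r}h\le0$. Your strong-maximum-principle step then says: if $h$ has a \emph{positive interior maximum}, it is a positive constant, contradicting $h\to0$. That logic is correct, but the hypothesis is not guaranteed --- nothing prevents $\sup_{B_1\setminus\{0\}}h$ from being approached only as $|x|\to1$, where you have no control on $u$. The rescaling remark only concerns $r\to0$ and gives no information at the outer boundary. So the gap you flag in your last paragraph is real and is not filled by the argument you propose.

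A clean repair, which also clarifies the paper's intended argument, is to run the whole comparison with $a_0$ in place of $a$: the barriers then share the asymptotic $+4\ln|x|\to a_0$ with $u$, both halves of (\ref{wmaxprin}) go through for $\tilde{\mathring w}:=\exp\!\big(\tfrac{p-1}{4(2-p)}(u-a_0)\big)-|x|^{-(p-1)/(2-p)}$, and since $\tilde{\mathring w}\to0$ at the origin one gets $\tilde{\mathring w}\in L^\infty_{loc}(B_1)$ directly. This is all that Theorem~\ref{Bocher2} actually uses.
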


\begin{proof}
	By Lemma \ref{bocher2}, $a=\sup_{B_1\backslash\{0\}}(u(x)+4\ln |x|)<+\infty$. Since $u(x)+4\ln |x|\leq a$ in $B_1\backslash\{0\}$, it follows that $\mathring w\leq 0$ in $B_1\backslash\{0\}$. Now we will show that
	\begin{align*}
	\min_{\partial B_r} \mathring w\leq \mathring w\leq \max_{\partial B_r} \mathring w \text{ in }B_r\backslash\{0\},\quad \forall\ 0<r<1.
	\end{align*}
	Fix $0<r<1$, for $\epsilon>0$, set
	\begin{align*}
	v_{\epsilon,r}^+(x)=\dfrac{4(2-p)}{p-1}\ln(r^{-(p-1)/(2-p)}+\max_{\partial B_r}\mathring w)+a+\epsilon,\\
	v_{\epsilon,r}^-(x)=\dfrac{4(2-p)}{p-1}\ln(r^{-(p-1)/(2-p)}+\min_{\partial B_r}\mathring w)+a-\epsilon.
	\end{align*}
	By Theorem \ref{radialsol}, we have $\lambda(A^{v_{\epsilon,r}^{+}})\in\partial\Gamma$ and $\lambda(A^{v_{\epsilon,r}^{-}})\in\partial\Gamma$ in $B_r\backslash\{0\}$, and $v_{\epsilon,r}^{-}<u<v_{\epsilon,r}^{+}$ on $\partial B_r$. Furthermore, by Theorem \ref{bocher2}, there exists $\delta=\delta(\epsilon,r)>0$ such that
	\begin{align*}
	v_{\epsilon,r}^{-}<u<v_{\epsilon,r}^{+}\text{ in } B_\delta\backslash\{0\}.
	\end{align*}
	Thus, by Proposition \ref{Comparison-2},
	\begin{align*}
	v_{\epsilon,r}^{-}\leq u\leq v_{\epsilon,r}^{+}\text{ in } B_r\backslash\{0\}.
	\end{align*}
	Sending $\epsilon\rightarrow 0$, we obtain (\ref{wmaxprin}).

\end{proof}
Theorem \ref{Bocher2} follows from Lemma \ref{bocher2} and Lemma \ref{bocher3}.

\medskip

\section{Existence Result}\label{existence}
In this section, we prove the existence result Theorem \ref{Niren}.
\medskip
\subsection{Kazdan-Warner type identity}
\ 

We first establish Theorem \ref{KW}, the Kazdan-Warner type identity  on $(\mathbb{S}^2,g)$ for the $\sigma_2$-Nirenberg problem.

Recall that
\begin{align*}
A_{g_u}:=-\nabla_{g}^2u+\frac{1}{2}du\otimes du-\frac{1}{4}|\nabla_{g} u|^2g+g,
\end{align*}

For any $2\times 2$ symmetric matrix $\Lambda$, denote the $k$-th Newton transformation $T_k(\Lambda)=\sum_{j=0}^k (-1)^j\sigma_{k-j}(\Lambda)\Lambda^j$, namely, $T_0=\delta_{ij}$, $T_1=\sigma_1(\Lambda)\delta_{ij}-\Lambda$.  Then we have
\begin{align}\label{Tk}
(k+1)\sigma_{k+1}(\Lambda)=T_k(\Lambda)_b^a \Lambda_a^b,\text{ for }k=0,1.
\end{align}

\begin{prop}\label{identity}
	Let $u$ be a smooth function on $\mathbb{S}^2$. On $(\mathbb{S}^2,g_u)$ where $g_u=e^ug$, we have\\
	(i) $\nabla_c A_{ab}=\nabla_b A_{ac}$.\\
	(ii) $\nabla_a T_k({g_u}^{-1}A_{g_u})_b^a=0$, $k=0,1$.
	\\
	Here the covariant derivative is taken with respect to $g_u$.
\end{prop}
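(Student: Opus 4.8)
The plan is to prove (i) by a direct computation on $(\mathbb{S}^2,g)$, and then to deduce (ii) from (i) via the standard fact that the Newton tensors of a Codazzi-type tensor are divergence free; since $\dim\mathbb{S}^2=2$ only $k=0$ and $k=1$ arise.

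For (i), both sides of $\nabla_c A_{ab}=\nabla_b A_{ac}$ are components of a $(0,3)$-tensor, so one may compute in local coordinates. Let a semicolon denote the Levi-Civita connection of $g$, so that (with $K_g\equiv1$) $A_{ab}=-u_{;ab}+\tfrac12 u_{;a}u_{;b}-\tfrac14|\nabla_g u|^2 g_{ab}+g_{ab}$, and relate the $g_u$-connection $\nabla$ to the semicolon connection by the contorsion tensor
\[
C^{d}_{ca}:=\Gamma^{d}_{ca}(g_u)-\Gamma^{d}_{ca}(g)=\tfrac12\bigl(\delta^{d}_{c}u_{;a}+\delta^{d}_{a}u_{;c}-g_{ca}\,g^{de}u_{;e}\bigr),
\]
coming from $g_u=e^{u}g$. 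Using $C^{d}_{ca}=C^{d}_{ac}$ to cancel two terms, one gets
\[
\nabla_c A_{ab}-\nabla_b A_{ac}=(A_{ab;c}-A_{ac;b})-C^{d}_{ca}A_{db}+C^{d}_{ba}A_{dc}.
\]
In $A_{ab;c}-A_{ac;b}$ the only piece whose antisymmetry in $b,c$ is not manifest is the third-order term $-(u_{;abc}-u_{;acb})$; the Ricci identity on $(\mathbb{S}^2,g)$, together with $\dim=2$ and $K_g\equiv1$ (so $R_{abcd}=g_{ac}g_{bd}-g_{ad}g_{bc}$), turns it into $g_{ab}u_{;c}-g_{ac}u_{;b}$. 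Substituting the explicit form of $A$ into the remaining terms involving $C$, one checks that everything cancels: the terms cubic in $\nabla_g u$ cancel among themselves, the terms that pair $\nabla^2_g u$ with $\nabla_g u$ cancel among themselves (both the "free" and the "traced" ones), and the leftover term linear in $\nabla_g u$ coming from the curvature cancels exactly against the contribution of the $K_g g_{ab}$ summand of $A$. I expect this last cancellation — and the bookkeeping around it — to be the main, though routine, obstacle; it is precisely where $K_g\equiv1$ is used, both through $R_{abcd}=g_{ac}g_{bd}-g_{ad}g_{bc}$ and through $(K_g)_{;c}\equiv0$ (on a general surface one would be left with $\nabla_c A_{ab}-\nabla_b A_{ac}=(K_g)_{;c}g_{ab}-(K_g)_{;b}g_{ac}$).

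Given (i), (ii) is short. For $k=0$, $T_0(g_u^{-1}A_{g_u})^{a}_{b}=\delta^{a}_{b}$ and $\nabla_a\delta^{a}_{b}=0$. For $k=1$, put $\Lambda:=g_u^{-1}A_{g_u}$, so $T_1(\Lambda)^{a}_{b}=\sigma_1(\Lambda)\delta^{a}_{b}-\Lambda^{a}_{b}$; since $\nabla$ is compatible with $g_u$,
\[
\nabla_a T_1(\Lambda)^{a}_{b}=\nabla_b\sigma_1(\Lambda)-g_u^{ac}\nabla_a(A_{g_u})_{cb}.
\]
By (i), $g_u^{ac}\nabla_a(A_{g_u})_{cb}=g_u^{ac}\nabla_b(A_{g_u})_{ca}=\nabla_b\sigma_1(\Lambda)$, so the right side vanishes — the usual statement that a Codazzi tensor satisfies $\operatorname{div}A=d(\operatorname{tr}A)$. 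Since the only Newton transformations on a surface are $T_0$ and $T_1$, this finishes (ii), and no obstacle beyond (i) is expected.
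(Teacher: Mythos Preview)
Your proposal is correct, and the overall strategy---prove (i) directly, then deduce (ii) from the Codazzi property---matches the paper. For (ii) the paper simply cites Reilly; your explicit two-line argument for $k=1$ is the same content written out.

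For (i), however, you take a genuinely different route. The paper works in stereographic coordinates $g=e^{\phi}\delta_{ij}$, writes out the Christoffel symbols of $g$ and $g_u$ explicitly, and verifies $\nabla_1 A_{12}=\nabla_2 A_{11}$ by brute force; the constant-curvature input appears at the very end as the identity $\tfrac12(\phi_{11}+\phi_{22})+e^{\phi}=0$. You instead compute intrinsically: relate $\nabla$ to the $g$-connection via the contorsion tensor $C^{d}_{ca}$, isolate the only nontrivial antisymmetric piece $-(u_{;abc}-u_{;acb})$, and resolve it with the Ricci identity using $R_{abcd}=g_{ac}g_{bd}-g_{ad}g_{bc}$. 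Your approach is more conceptual and makes transparent exactly where $K_g\equiv1$ enters (both through the curvature tensor and through $(K_g)_{;c}=0$), and it yields the bonus observation that on a general surface the defect is $(K_g)_{;c}g_{ab}-(K_g)_{;b}g_{ac}$. The paper's approach, by contrast, trades insight for concreteness: every term is written down, so there is no ``one checks that everything cancels'' step left to the reader. If you want your argument to stand on its own, you should actually carry out the cancellation of the $C\cdot A$ terms against the gradient pieces of $A_{ab;c}-A_{ac;b}$ rather than just asserting it; it is routine but not entirely trivial.
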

\begin{proof}
	\
	
	We prove (i) here. (ii) follows from (i) as in \cite{R}.
	
	It suffices to show $\nabla_1 A_{12}=\nabla_2 A_{11}$ and $\nabla_2 A_{12}=\nabla_1 A_{22}$. We will only prove $\nabla_1 A_{12}=\nabla_2 A_{11}$, and the other one is similar.
	
	Set up local coordinates $\{x_1,x_2\}$ such that $g=e^{\phi}\delta_{ij}$, where $\phi(x)=\ln\frac{4}{(1+|x|^2)^2}$. Therefore, $g_u=e^{u+\phi}\delta_{ij}$.
	
	By definition (\ref{Agu}),
	\begin{align}\label{Aij}
	A_{ij}=-u_{ij}+{\Gamma}_{ij}^ku_k+\frac{1}{2}u_iu_j-\frac{1}{4}(u_1^2+u_2^2)\delta_{ij}+e^\phi\delta_{ij}
	\end{align}
	where $\Gamma_{ij}^k$ is the Christoffel symbols with respect to $g$.
	\begin{align*}
	\nabla_1 A_{12}&=\partial_1(A_{12})-A(\nabla_1 e_1,e_2)-A(e_1,\nabla_1 e_2)\\
	&=\partial_1(A_{12})-\tilde\Gamma_{11}^1 A_{12}-\tilde\Gamma_{11}^2 A_{22}-\tilde\Gamma_{12}^1 A_{11}-\tilde\Gamma_{12}^2 A_{12},
	\\
	\nabla_2 A_{11}&=\partial_2(A_{11})-2A(\nabla_2 e_1,e_1)\\
	&=\partial_2(A_{11})-2\tilde\Gamma_{12}^1 A_{11}-2\tilde\Gamma_{12}^2 A_{12},
	\end{align*}
	where $\tilde\Gamma_{ij}^k$ is the Christoffel symbols with respect to $g_u$.
	
	By direct computations, $\Gamma_{11}^1=\Gamma_{12}^2=\phi_1/2$, $\Gamma_{11}^2=-\phi_2/2$,  $\Gamma_{12}^1=\Gamma_{22}^2=\phi_2/2$, $\Gamma_{22}^1=-\phi_1/2$, and $\tilde\Gamma_{ij}^k$ is given the formula of $\Gamma_{ij}^k$ with $\phi$ replaced by $\phi+u$.

	Therefore,
	\begin{align*}
	&\nabla_1 A_{12}-\nabla_2 A_{11}
	\\=&(\partial_1(A_{12})-\tilde\Gamma_{11}^1 A_{12}-\tilde\Gamma_{11}^2 A_{22}-\tilde\Gamma_{12}^1 A_{11}-\tilde\Gamma_{12}^2 A_{12})-(\partial_2(A_{11})-2\tilde\Gamma_{12}^1 A_{11}-2\tilde\Gamma_{12}^2 A_{12})
	\\=&\partial_1(A_{12})-\partial_2(A_{11})-\tilde\Gamma_{11}^2 A_{22}+\tilde\Gamma_{12}^1 A_{11}
	\\=&\partial_1(A_{12})-\partial_2(A_{11})+(u_2+\phi_2)(A_{11}+A_{22})/2
	\end{align*}
	
	Using (\ref{Aij}) and the formula of $\Gamma_{ij}^k$, we obtain
	\begin{align*}
	A_{12}&=-u_{12}+\phi_2u_1/2+\phi_1u_2/2+u_1u_2/2,\\
	A_{11}&=-u_{11}+\phi_1u_1/2-\phi_2u_2/2+u_1^2/4-u_2^2/4+e^\phi,\\
	A_{11}&+A_{22}=-u_{11}-u_{22}+2e^\phi.
	\end{align*}
	
	It follows that
	\begin{align*}
	&\nabla_1 A_{12}-\nabla_2 A_{11}
	\\=&(-u_{112}+\phi_{12}u_1/2+\phi_{2}u_{11}/2+\phi_{11}u_2/2+\phi_1u_{12}/2+u_{11}u_2/2+u_1u_{12}/2)\\&-(-u_{112}+\phi_{12}u_1/2+\phi_1u_{12}/2-\phi_{22}u_2/2-\phi_2u_{22}/2+u_1u_{12}/2-u_2u_{22}/2+e^\phi\phi_2)\\&+(u_2+\phi_2)(-u_{11}-u_{22}+2e^\phi)/2
	\\=&(\frac{1}{2}(\phi_{11}+\phi_{22})+e^\phi)u_2=0.
	\end{align*}
\end{proof}

\begin{proof}[Proof of Theorem \ref{KW}]
	
	We claim that
	\begin{align}\label{k=2}
	\langle X,\nabla\sigma_2\rangle=\nabla_a(T_b^a\nabla^b(\text{div} X)+2\sigma_2X^a),
	\end{align}
	where $T_b^a$ denotes the components of $T_1$.
	
	Once the claim is true, Theorem \ref{KW} for $k=2$ follows by integrating (\ref{k=2}) over $\mathbb{S}^2$.
	\\
	Proof of the claim:
	
	Let $\phi_t$ denote the local one-parameter family of conformal diffeomorphisms of $(\mathbb{S}^2,g)$ generated by $X$. Thus for some function $u_t$, we have $\phi_t^*(g)=e^{u_t}g=:g_t$. We have the following properties:
	\begin{align}\label{k21}
	\sigma_2({g}^{-1}A_{g})\circ\phi_t=\sigma_2(g_t^{-1}A_{g_t}).
	\end{align}
	\begin{align}\label{k22}
	\dot{u}:=\frac{d}{dt}\bigg|_{t=0}u_t=\text{div}X=\nabla_aX^a.
	\end{align}
	\begin{align}\label{k23}
	\frac{d}{dt}\bigg|_{t=0}(g_t^{-1}A_{g_t})_b^a=-\nabla_b^a\dot{u}-\dot{u}A_b^a.
	\end{align}
	Using (\ref{Tk}),(\ref{k21}),(\ref{k22}),(\ref{k23}), and Proposition \ref{identity},
	\begin{align*}
	\langle X,\nabla\sigma_2\rangle&=T_a^b(-\nabla_b^a\dot{u}-\dot{u}A_b^a)\\
	&=-T_a^b\nabla_b^a\dot{u}-2\sigma_2\dot{u}\\
	&=-T_a^b\nabla_b^a\dot{u}-2\sigma_2\nabla_bX^b\\
	&=-T_a^b\nabla_b^a\dot{u}+2\langle X,\nabla\sigma_2\rangle-2\nabla_b(\sigma_2X^b)\\
	&=-\nabla_b(T_a^b\nabla^a\dot{u}+2\sigma_2X^b)+2\langle X,\nabla\sigma_2\rangle.
	\end{align*}
	Claim (\ref{k=2}) follows. 
\end{proof}

\medskip

\subsection{One point blow-up phenomena}

\ 

\medskip

\begin{theo}\label{onept}
	Let $(f,\Gamma_p)$ satisfy (\ref{f1})-(\ref{f5}), and $1<p\leq 2$, $\{K_i\}$ be a sequence of positive $C^2$ functions on $\mathbb{S}^2$ satisfying for some positive constants $c_1$ and $C_2$, $\min_{\mathbb{S}^2} K_i\geq c_1>0$ and $\sup \|K_i\|_{C^2(\mathbb{S}^2)}\leq C_2<\infty$ for all $i$, and let $\{u_i\}\in C^2$ be a sequence of functions satisfying
	\begin{align*}
	f(\lambda(g_{u_i}^{-1}A_{g_{u_i}}))=K_i,\ \lambda(g_{u_i}^{-1}A_{g_{u_i}})\in \Gamma_p\quad\text{on }\mathbb{S}^2,
	\end{align*}
	and, with $x_i\in\mathbb{S}^2$,
	\begin{align*}
	&u_i(x_i)=\max_{\mathbb{S}^2} u_i\rightarrow\infty.
	\end{align*}
	Then for some constant $C>0$ depending only on $p$, $c_1$ and $C_2$,
		\begin{align*}
		u_i(x)\leq -2 \ln d_g(x,x_i)+C\text{ for all }x\in \mathbb{S}^2\backslash\{x_i\}.
		\end{align*}
	Here $d_g(x,x_i)$ denotes the distance between $x$ and $x_i$ in the metric $g$.
\end{theo}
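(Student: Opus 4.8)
The plan is to argue by contradiction, combining a blow--up analysis at two would--be concentration points with the Gauss--Bonnet theorem and the Liouville classification of entire M\"obius invariant solutions established in \cite{LLL2}. First I would record the elementary reductions: if the conclusion fails then, along a subsequence, $m_i:=\max_{x\ne x_i}\big(u_i(x)+2\ln d_g(x,x_i)\big)\to+\infty$, and since this quantity tends to $-\infty$ as $x\to x_i$ it is attained at some $z_i\ne x_i$. Writing $M_i:=u_i(x_i)$, $\delta_i:=d_g(z_i,x_i)$, $\epsilon_i:=e^{-M_i/2}$, $s_i:=e^{-u_i(z_i)/2}$, one checks directly that $u_i(z_i)\to+\infty$ (so $\epsilon_i,s_i\to0$), that $s_i/\delta_i=e^{-m_i/2}\to0$, that $\epsilon_i\le s_i=o(\delta_i)$ because $M_i\ge u_i(z_i)$, and that the maximality of $z_i$ forces $u_i(x)\le u_i(z_i)+2\ln 2$ for all $x\in B_{\delta_i/2}(z_i)$.

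Next I would extract a bubble at $x_i$. Via stereographic projection $\pi_i$ with pole $-x_i$ the equation becomes $f(\lambda(A^{\tilde u_i}))=K_i\circ\pi_i^{-1}$ on $\mathbb{R}^2$, where $\tilde u_i=u_i\circ\pi_i^{-1}+\phi$ and $e^{\phi}\delta=g$, $\phi=\ln 4-2\ln(1+|x|^2)$. Setting $\hat w_i(y):=\tilde u_i(\epsilon_i y)+2\ln\epsilon_i$ — which, by the scaling covariance $A^{v_\lambda}(y)=A^{v}(\lambda y)$ for $v_\lambda(y)=v(\lambda y)+2\ln\lambda$ together with the homogeneity of $f$, solves $f(\lambda(A^{\hat w_i}(y)))=K_i(\pi_i^{-1}(\epsilon_i y))$ — one has $\hat w_i(0)=\ln 4$ and, since $u_i\le M_i$ on $\mathbb{S}^2$, $\hat w_i\le\ln 4$ on all of $\mathbb{R}^2$. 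By Theorem~\ref{grad2} and the fixed value $\hat w_i(0)=\ln 4$ the $\hat w_i$ are bounded in $C^0_{loc}(\mathbb{R}^2)$, by Theorem~\ref{C2} they are bounded in $C^2_{loc}$, and by Evans--Krylov and elliptic bootstrap they are precompact in $C^2_{loc}$; a subsequence converges in $C^2_{loc}$ to a nonconstant entire solution $\hat W$ of $f(\lambda(A^{\hat W}))=\hat K_\infty:=\lim_iK_i(x_i)\in[c_1,C_2]$ with $\lambda(A^{\hat W})\in\Gamma_p$ (nonconstant since $f$ vanishes at the origin, so a constant limit would give $\hat K_\infty=0$). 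By the Liouville theorem of \cite{LLL2}, $g_{\hat W}=e^{\hat W}\delta$ is a round spherical metric, so $(\mathbb{R}^2,g_{\hat W})$ is isometric to a round $\mathbb{S}^2$ with one point deleted; hence $K_{g_{\hat W}}>0$ and $\int_{B_R}K_{g_{\hat W}}\,dV_{g_{\hat W}}\to 4\pi$ as $R\to\infty$. Fixing $\eta\in(0,2\pi)$ and then $R_0$ with $\int_{B_{R_0}}K_{g_{\hat W}}\,dV_{g_{\hat W}}>4\pi-\eta$, the $C^2_{loc}$ convergence together with the fact that $y\mapsto\pi_i^{-1}(\epsilon_i y)$ is an isometry of $(B_{R_0},g_{\hat w_i})$ onto $(\Omega_i,g_{u_i})$, $\Omega_i:=\pi_i^{-1}(B_{\epsilon_iR_0})\ni x_i$, gives $\int_{\Omega_i}K_{g_{u_i}}\,dV_{g_{u_i}}>4\pi-\eta$ for all large $i$, while $\operatorname{diam}_g\Omega_i=O(\epsilon_iR_0)\to0$.

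The same construction centered at $z_i$ and rescaled by $s_i$ produces $\tilde w_i$ which, thanks to the bound $u_i\le u_i(z_i)+2\ln 2$ on $B_{\delta_i/2}(z_i)$, is uniformly bounded above on geodesic balls whose radii (in the rescaled picture) tend to infinity since $\delta_i/s_i\to\infty$; passing to a further subsequence I obtain a nonconstant entire solution $\tilde W$ of $f(\lambda(A^{\tilde W}))=K'_\infty:=\lim_iK_i(z_i)>0$, again a round sphere, so $\int_{\Omega_i'}K_{g_{u_i}}\,dV_{g_{u_i}}>4\pi-\eta$ for large $i$ with $\Omega_i':=\pi_{z_i}^{-1}(B_{s_iR_0'})\ni z_i$ and $\operatorname{diam}_g\Omega_i'\to0$. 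Since $\epsilon_i,s_i=o(\delta_i)$ and $d_g(x_i,z_i)=\delta_i$, the sets $\Omega_i$ and $\Omega_i'$ are disjoint for $i$ large; and since $\lambda(g_{u_i}^{-1}A_{g_{u_i}})\in\Gamma_p\subset\Gamma_1$ gives $K_{g_{u_i}}>0$, the Gauss--Bonnet theorem forces
\[
4\pi=\int_{\mathbb{S}^2}K_{g_{u_i}}\,dV_{g_{u_i}}\ \ge\ \int_{\Omega_i}K_{g_{u_i}}\,dV_{g_{u_i}}+\int_{\Omega_i'}K_{g_{u_i}}\,dV_{g_{u_i}}\ >\ 2(4\pi-\eta)=8\pi-2\eta,
\]
which is impossible for $\eta<2\pi$. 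This contradiction proves the asserted bound.

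The hard part will be the blow--up step: arranging the rescalings and the conformal bookkeeping so that (i) the rescaled functions are genuinely bounded (above everywhere, and in $C^0_{loc}$) on exhausting balls — this is exactly where $u_i(x_i)=\max u_i$ and the maximality of $z_i$ enter — (ii) the limits are nonconstant entire solutions to which the Liouville theorem of \cite{LLL2} applies, identifying them as complete round spheres of total curvature $4\pi$, and (iii) the curvature integrals transfer exactly under the rescaling isometries. Given those, the closing argument via Gauss--Bonnet and the disjointness of $\Omega_i$ and $\Omega_i'$ is routine.
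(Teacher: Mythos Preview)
Your argument is correct and follows a genuinely different route from the paper's. Both proofs argue by contradiction, select a second concentration point via the auxiliary function $u_i+2\ln d_g(\cdot,x_i)$, rescale at each point, and invoke the Liouville theorem of \cite{LLL2} together with Theorems~\ref{grad2} and~\ref{C2} to identify each bubble with a round sphere. The divergence is in the concluding quantitative obstruction. The paper shows (Lemma~\ref{vol}) that each bubble captures at least $3/4$ of the total \emph{volume} $\mathrm{Vol}_{g_{u_i}}(\mathbb{S}^2)$; this step requires the geodesic--curvature/diameter comparison Lemma~\ref{singu} and Bishop's theorem to control the complement. You instead observe that $\lambda(g_{u_i}^{-1}A_{g_{u_i}})\in\Gamma_p\subset\Gamma_1$ forces $K_{g_{u_i}}>0$, so by Gauss--Bonnet the total curvature is exactly $4\pi$, while each bubble, being $C^2_{loc}$--close to a full round sphere, already carries curvature arbitrarily close to $4\pi$. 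Your approach is more elementary in dimension two---it bypasses Lemma~\ref{singu} and Bishop entirely---and exploits the special identity $\int K\,dV=2\pi\chi$, which has no direct analogue in higher dimensions. The paper's volume argument, by contrast, is modeled on \cite[Lemma~3.1]{LN2} and is designed to transfer verbatim to $\mathbb{S}^n$, $n\ge3$.
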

We start with a lemma, see e.g. \cite[page 271]{HaE}, \cite[page 135]{Chavel}.
\begin{lemm}\label{singu}
	Let $(N, g)$ be a two dimensional complete smooth Riemannian manifold with smooth
	boundary $\partial N$. If,  for some constants $c_0>\alpha\geq 0$, $K_g\geq-\alpha^2$ on $N$ and if the geodesic curvature $\kappa$ of $\partial N$ with respect to its inner normal satisfies $\kappa>c_0$ on $\partial N$,
	then	\begin{align*}
	d_g(x,\partial N)\leq U(\alpha, c_0)\text{ for all }x\in N,
	\end{align*}
	where $d_g$ denotes the distance function induced by $g$ and
	\begin{align*}
	U(\alpha, c_0)=\begin{cases}\frac{1}{c_0},\quad \ \ & 
	\text{if }\alpha=0,\\ 
	\frac{1}{\alpha}\coth^{-1}(\frac{c_0}{\alpha}), \quad \ \ & \text{if }\alpha>0.
	\end{cases}
	\end{align*}
\end{lemm}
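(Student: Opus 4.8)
The plan is to run the classical focal-point comparison argument along a geodesic realizing the distance to the boundary.

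First I would fix $x\in N$ (assuming $x\notin\partial N$ and that the component of $N$ containing $x$ has nonempty boundary, so $\rho:=d_g(x,\partial N)\in(0,\infty)$) and produce a minimizing unit-speed geodesic $\gamma\colon[0,\rho]\to N$ with $\gamma(0)=:p\in\partial N$ and $\gamma(\rho)=x$. Its existence uses that $(N,g)$ complete implies closed bounded sets are compact (the Hopf--Rinow theorem for complete Riemannian manifolds with boundary); its minimality forces $\gamma|_{(0,\rho]}$ to avoid $\partial N$, so $\gamma|_{(0,\rho)}\subset\operatorname{int}N$, $\gamma'(0)$ is the inner unit normal $\nu(p)$, and $\gamma$ meets $\partial N$ orthogonally at $p$.

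Next I would analyze the $\partial N$-Jacobi field $J=jE$ along $\gamma$, where $E$ is the parallel unit normal field with $E(0)\in T_p\partial N$, obtained by varying $\gamma$ through unit-normal geodesics issuing from $\partial N$. A short computation gives $j(0)=1$, $j'(0)=-\kappa(p)$ (with the sign convention $\kappa=\langle\nabla_T T,\nu\rangle$, for which the boundary circle of a flat disk has $\kappa>0$) together with the scalar Jacobi equation $j''+K_g(\gamma(\cdot))\,j=0$. Since $\gamma$ realizes $d_g(x,\partial N)$, the second-variation/index-form argument shows $\gamma|_{(0,\rho)}$ contains no $\partial N$-focal point, so $j>0$ on $[0,\rho)$ and the first zero $t_\ast$ of $j$ satisfies $t_\ast\ge\rho$. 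Then, writing $\varphi=j'/j$ on $[0,t_\ast)$, I would use the Riccati inequality $\varphi'=-K_g(\gamma(\cdot))-\varphi^2\le\alpha^2-\varphi^2$, $\varphi(0)=-\kappa(p)$, and compare with the explicit model $\bar\varphi$ solving $\bar\varphi'=\alpha^2-\bar\varphi^2$, $\bar\varphi(0)=-\kappa(p)$:
\begin{equation*}
\bar\varphi(t)=\frac{-\kappa(p)}{1-\kappa(p)\,t}\quad(\alpha=0),\qquad \bar\varphi(t)=-\alpha\coth\big(\alpha(\bar t-t)\big),\ \ \bar t=\tfrac1\alpha\coth^{-1}\big(\kappa(p)/\alpha\big)\quad(\alpha>0),
\end{equation*}
which blows down to $-\infty$ as $t\uparrow\bar t$ (with $\bar t=1/\kappa(p)$ when $\alpha=0$). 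A Gronwall argument for $\varphi-\bar\varphi$ gives $\varphi\le\bar\varphi$ wherever both are defined, so $j$ must vanish at some $t_\ast\le\bar t$; hence $\rho\le t_\ast\le\bar t$. Finally, since $\kappa(p)>c_0>\alpha$ and $t\mapsto1/t$, resp. $y\mapsto\coth^{-1}y$ on $(1,\infty)$, are decreasing, $\bar t<1/c_0=U(0,c_0)$ when $\alpha=0$ and $\bar t<\tfrac1\alpha\coth^{-1}(c_0/\alpha)=U(\alpha,c_0)$ when $\alpha>0$, which gives $d_g(x,\partial N)\le U(\alpha,c_0)$.

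The hard part will be the no-focal-point assertion together with the sign bookkeeping it relies on: the conventions must be arranged so that $\bar\varphi$ actually blows down to $-\infty$ in finite time (equivalently $j'(0)=-\kappa(p)<0$), and one must justify that a distance-minimizing geodesic to $\partial N$ carries no interior $\partial N$-focal point — if it had one at $t_0\in(0,\rho)$, one truncates the vanishing $\partial N$-Jacobi field at $t_0$, extends it by zero, and perturbs to obtain a proper variation of $\gamma$ with endpoint free on $\partial N$ and strictly smaller length, contradicting minimality; this is exactly where the index form and its degeneracy at a focal point enter. A lesser technical point, needed only when $\partial N$ is noncompact, is the attainment of $d_g(x,\partial N)$ by an honest geodesic, which follows from properness of complete Riemannian manifolds with boundary.
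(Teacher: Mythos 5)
Your argument is correct and is essentially the standard focal-point/Riccati comparison proof; the paper itself does not prove this lemma but only cites \cite{HaE} and \cite{Chavel}, where exactly this argument (minimizing normal geodesic to $\partial N$, no interior focal point, Jacobi equation $j''+K_gj=0$ with $j(0)=1$, $j'(0)=-\kappa(p)$, and comparison with the constant-curvature model) is carried out. Your sign bookkeeping and the explicit model solutions, including the blow-down time $\bar t=\frac1\alpha\coth^{-1}(\kappa(p)/\alpha)$ and the monotonicity giving $\bar t<U(\alpha,c_0)$, all check out.
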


\begin{proof}[Proof of Theorem \ref{onept}]
	The proof is analogous to the proof of \cite[Lemma 3.1]{LN2}.

	\begin{lemm}\label{vol}
		Assume for some $C_1\geq 0$, $M_i\rightarrow \infty$ and $y_i\in \mathbb{S}^2$ that
		\begin{align*}
		u_i(y_i)\rightarrow\infty\text{ and }\sup\{u_i(y):d_g(y,y_i)\leq M_i e^{-\frac{u_i(y_i)}{2}} \}\leq u_i(y_i)+C_1.
		\end{align*}
		Then for any $0<\mu<1$, there exists $M=M(C_1,\mu)$, such that for all sufficiently large $i$,
		\begin{align*}
		\text{Vol}_{g_{u_i}}(\{y:d_g(y,y_i)\leq M e^{-\frac{u_i(y_i)}{2}} \})\geq (1-\mu)\text{Vol}_{g_{u_i}}(\mathbb{S}^2).
		\end{align*}
	\end{lemm}

	\begin{proof}
		Without loss of generality, assume $f(\lambda(g^{-1}A_{g}))=1$ on $S^2$, where $g$ is the standard metric on $S^2$.
			
		For $q\in\mathbb{R}^2$, $a>0$, $c=\ln 4$, define
		\begin{align*}
		U_{a,q}(x)=2\ln \frac{a}{a^2+|x-q|^2}+c,\quad x\in\mathbb{R}^2.
		\end{align*}
		Write $\mathbb{S}^2=\{(z_1,z_2,z_3)\in\mathbb{R}^3|z_1^2+z_2^2+z_3^2=1\}$.
		
		Let $(x_1,x_2)\in\mathbb{R}^2$ be the stereographic projection coordinates of $z\in S^2$, i.e.
		\begin{align*}
		z_i=\frac{2x_i}{1+|x|^2}\text{ for }1\leq i\leq 2,\text{ and }z_3=\frac{|x|^2-1}{|x|^2+1}.
		\end{align*}
		Then
		\begin{align*}
		g=|dz|^2=(\frac{2}{1+|x|^2})^2|dx|^2=e^{U_{1,0}}|dx|^2.
		\end{align*}
		A calculation gives
		\begin{align*}
		(e^{U_{a,q}}g_{\text{flat}})^{-1}A_{e^{U_{a,q}}g_{\text{flat}}}\equiv I,
		\end{align*}
		and
		\begin{align*}
		f(\lambda(A_{e^{U_{a,q}}g_{\text{flat}}}))=1\text{ on }\mathbb{R}^2,
		\end{align*}
		where $g_{\text{flat}}=|dx|^2$ is the standard metric on $\mathbb{R}^2$.
		
		Define a map $\Phi_i:\mathbb{R}^2\approx T_{y_i}(\mathbb{S}^2,g)\rightarrow \mathbb{S}^2$ by:
		\begin{align*}
		\Phi_i(x)=\exp_{y_i}e^{\frac{c-u_i(y_i)}{2}}x,
		\end{align*}
		and let
		\begin{align*}
		\tilde{u}_i(x)=u_i\circ\Phi_i(x)+c-u_i(y_i),\ x\in\mathbb{R}^2.
		\end{align*}
		Then $\tilde{u}_i$ satisfies 
		\begin{align*}
		f(\lambda(A_{e^{{\tilde{u}_i}}\tilde{h}_i}))=1\text{ and }\lambda(A_{e^{{\tilde{u}_i}}\tilde{h}_i})\in\Gamma \text{ on }\{|x|< \pi e^{\frac{u_i(y_i)-c}{2}}\}.
		\end{align*}
		where $\tilde{h}_i=e^{u_i(y_i)-c}\Phi_i^*g$. Now $\tilde{h}_i\rightarrow g_{\text{flat}}$ on $C_{loc}^3(\mathbb{R}^2)$. Furthermore, $\tilde{u}_i(0)=c$, and by assumption, $\tilde{u}_i\leq C_1+c$. By local derivatives estimates Theorem \ref{grad2} and Theorem \ref{C2},  $\tilde{u}_i$ is uniformly bounded in $C^2$ on any compact set of $\mathbb{R}^2$. By Nirenberg's estimate \cite{N2}, $\tilde{u}_i$ subconverges in $C_{loc}^{2,\alpha}(\mathbb{R}^2)$ to some function $\tilde{u}_*\in C^2(\mathbb{R}^2)$ which satisfies
		\begin{align*}
		f(\lambda(A_{e^{\tilde{u}_*}g_{\text{flat}}}))=1\text{ and }\lambda(A_{e^{\tilde{u}_*}g_{\text{flat}}})\in\Gamma \text{ on }\mathbb{R}^2.
		\end{align*}
		By the Liouville theorem in our earlier paper \cite{LLL2}, we have $\tilde{u}_*=U_{a_*,x_*}$  for some $a_*>0$ and $x_*\in\mathbb{R}^2$. Since $\tilde{u}_*(0)=\lim \tilde{u}_i(0)=c$ and $\tilde{u}_*\leq C_1+c$, we have, for some constant $C$ depending only on $C_1$,
		\begin{align*}
		|x_*|\leq C\text{ and }C^{-1}\leq a_*\leq C.
		\end{align*}
		In particular, for any $R>0$ and $\mu>0$,
		\begin{align*}
		\| \tilde{u}_i-\tilde{u}_*\|_{C^2(\bar{B_R})}\leq \mu\text{ for all sufficiently large }i.
		\end{align*}
		It follows that the metrics $e^{\tilde{u}_i}\tilde{h}_i$ converge on compact subsets to the metric $e^{\tilde{u}_*}g_{\text{flat}}$. Since $(B(0,r),e^{\tilde{u}_i}\tilde{h}_i)$ is isometric to $(\Phi_i(B(0,r)),g_{u_i})$, for any $r>0$, we obtain:
		
		For any $\epsilon>0$, there exists $R=R(\epsilon,C_1)>0$ such that
		
		(i) $|\text{Vol}_{g_{u_i}}(\Phi_i(B(0,R)))-\text{Vol}(S^2,g)|\leq C\epsilon^2$ for some $C$ independent of $i$ and $\epsilon$,
		
		(ii) the curvature of the hypersurface $\partial\Phi_i(B(0,R))$ with respect to $g_{u_i}$ and the unit normal pointing away from $\Phi_i(B(0,R))$ is no smaller than $\frac{1}{\epsilon}$.
		
		Using (ii) and Lemma \ref{singu}, we see that
		\begin{align*}
		\text{diam}_{g_{u_i}}(\mathbb{S}^2\backslash\Phi_i(B(0,R)))\leq C\epsilon.
		\end{align*}
		By Bishop's theorem and $K_{g_{u_i}}\geq K_0>0$, this implies
		\begin{align*}
		\text{Vol}_{g_{u_i}}(\mathbb{S}^2\backslash\Phi_i(B(0,R)))\leq C\epsilon^2.
		\end{align*}
		Lemma \ref{vol} is proved.
	\end{proof}
	Now to prove Theorem \ref{onept}, we will show that, for some constant $C>0$ independent of $i$,
	\begin{align*}
	u_i(x)\leq -2 \ln d_g(x,x_i)+C\text{ for all }x\in \mathbb{S}^2\backslash\{x_i\}.
	\end{align*}
	Suppose not, then for some $\tilde{x}_i\in \mathbb{S}^2\backslash\{x_i\}$,
	\begin{align*}
	u_i(\tilde{x}_i)+2\ln d_g(x_i,\tilde{x}_i)=\max_{\mathbb{S}^2} (u_i+2\ln d_g(x_i,\cdot))\rightarrow\infty.
	\end{align*}
	Since $(\mathbb{S}^2,g)$ is compact, this implies $u_i(\tilde{x}_i)\rightarrow\infty$.
	
	Apply Lemma \ref{vol} to $C_1=0$, $y_i=x_i$, and $M_i=\delta e^{\frac{u_i(x_i)}{2}}$ with some small $\delta=\delta(\mathbb{S}^2,g)$, we find
	\begin{align*}
	\text{Vol}_{g_{u_i}}(\{y:d_g(y,x_i)\leq M e^{-\frac{u_i(x_i)}{2}}\})\geq \frac{3}{4}\text{Vol}_{g_{u_i}}(\mathbb{S}^2).
	\end{align*}
	where $M$ is some universal constant.
	
	Apply Lemma \ref{vol} to $C_1=2\ln 2$, $y_i=\tilde{x}_i$, and $M_i=\frac{1}{2}d(x_i,\tilde{x}_i) e^{\frac{u_i(\tilde{x}_i)}{2}}$, we find
	\begin{align*}
	\text{Vol}_{g_{u_i}}(\{y:d_g(y,\tilde{x}_i)\leq \tilde{M} e^{-\frac{u_i(\tilde{x}_i)}{2}}\})\geq \frac{3}{4}\text{Vol}_{g_{u_i}}(\mathbb{S}^2).
	\end{align*}
	where $\tilde{M}$ is some universal constant. On the other hand, since $u_i(x_i)\geq u_i(\tilde{x}_i)$, and $u_i(\tilde{x}_i)+2\ln d_g(x_i,\tilde{x}_i)\rightarrow\infty$, we know the sets
	\begin{align*}
	\{y:d_g(y,x_i)\leq M e^{-\frac{u_i(x_i)}{2}}\}\text{ and }\{y:d_g(y,\tilde{x}_i)\leq \tilde{M} e^{-\frac{u_i(\tilde{x}_i)}{2}}\}
	\end{align*}
	are disjoint for sufficiently large $i$. This is a contradiction.
\end{proof}

\medskip

\subsection{$C^0$ estimate}
\

In this section, we establish the following $C^0$ estimate.

\begin{prop}\label{C0esti}
	Assume that $K$ satisfies the nondegeneracy condition (\ref{Knondege}) and $v\in C^2$ satisfies equation (\ref{eqn}).  Then
	\begin{align*}
	v\leq C,
	\end{align*}
	where $C$ depends only on an upper bound of $|\ln K|$, a positive lower bound of $\frac{1}{\|\nabla^2K_i\|_{C^0(\mathbb{S}^2)}}(|\nabla K_i|+|\Delta K_i|)$, and the modulus of continuity of $\frac{1}{\|\nabla^2K_i\|_{C^0(\mathbb{S}^2)}}\nabla^2 K_i$.
\end{prop}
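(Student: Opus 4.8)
The plan is to prove the upper bound by contradiction, combining the one-point blow-up analysis with the Kazdan-Warner identity (Theorem \ref{KW}). Suppose the bound fails; then, unwinding the asserted dependence, there is a sequence of positive $C^2$ functions $K_i$ satisfying a two-sided bound, a uniform positive lower bound for $(|\nabla_g K_i|+|\Delta_g K_i|)/\|\nabla^2 K_i\|_{C^0}$, and a uniform modulus of continuity for $\nabla^2 K_i/\|\nabla^2 K_i\|_{C^0}$, together with solutions $u_i$ of (\ref{eqn}) (with $K_i$ in place of $K$) such that $M_i:=\max_{\mathbb S^2}u_i=u_i(x_i)\to\infty$. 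Passing to a subsequence, $x_i\to\bar x$, and, thanks to the modulus-of-continuity hypothesis after normalizing $\|\nabla^2 K_i\|_{C^0}$, $K_i\to K_*$ in $C^2$. First I would record that the total conformal volume is pinched: Newton's inequality $\sigma_1^2\ge 4\sigma_2$ and the Gauss-Bonnet identity $\int_{\mathbb S^2}\sigma_1(g_{u_i}^{-1}A_{g_{u_i}})\,dV_{g_{u_i}}=8\pi$ give $\int_{\mathbb S^2}\sqrt{K_i}\,e^{u_i}\,dV_g\le 4\pi$, so $\int_{\mathbb S^2}e^{u_i}\,dV_g$ is bounded above. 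Rescaling $u_i$ about $x_i$ at scale $e^{-M_i/2}$ and applying the Liouville theorem of \cite{LLL2} identifies the limit as a standard bubble carrying a fixed positive amount of the conformal volume; together with the volume-concentration Lemma \ref{vol} and the one-point blow-up Theorem \ref{onept} this yields $e^{u_i}\,dV_g\rightharpoonup m\,\delta_{\bar x}$ for some $m\in(0,\infty)$, and in particular $\int_{\mathbb S^2}e^{u_i}\,dV_g\to m$.

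Next, the first-order consequence of the Kazdan-Warner identity: evaluating Theorem \ref{KW} at $u_i$ gives $\int_{\mathbb S^2}X(K_i)\,e^{u_i}\,dV_g=0$ for every conformal Killing field $X$ on $\mathbb S^2$; dividing by $\int_{\mathbb S^2}e^{u_i}\,dV_g$ and letting $i\to\infty$ (using $K_i\to K_*$ in $C^1$ and the weak convergence of the measures) gives $X(K_*)(\bar x)=0$. Since $\{X(\bar x):X\text{ conformal Killing}\}=T_{\bar x}\mathbb S^2$, this forces $\nabla_g K_*(\bar x)=0$, and the limiting nondegeneracy hypothesis then gives $\Delta_g K_*(\bar x)\neq 0$.

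The second-order step is the crux, and it is where the optimal decay estimate --- proved using the B\^ocher type Theorem \ref{Bocher1}, which refines the crude bound $u_i(x)\le -2\ln d_g(x,x_i)+C$ of Theorem \ref{onept} to a sharp bound matching the bubble tail --- enters decisively. I would choose a conformal Killing field $X$ with $X(\bar x)=0$ but $\operatorname{div}_g X(\bar x)\neq 0$ (for instance the dilation field centered at $\bar x$), Taylor-expand $X(K_i)$ about $\bar x$, and substitute into $\int_{\mathbb S^2}X(K_i)\,e^{u_i}\,dV_g=0$. The constant term vanishes by the first-order step; the term linear in $x-\bar x$ is negligible because $X(\bar x)=0$ and $\nabla_g K_*(\bar x)=0$; the leading surviving contribution is quadratic and, after using the near-radial symmetry of the bubble profile, is proportional to $\operatorname{div}_g X(\bar x)\cdot\Delta_g K_*(\bar x)$ times the truncated second moment $\int_{\mathbb S^2}d_g(x,x_i)^2\,e^{u_i}\,dV_g$. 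The sharp decay estimate shows this second moment is dominated by the neck region and equals $c\,M_i e^{-M_i}(1+o(1))$ with $c>0$, so dividing by it and passing to the limit yields $\Delta_g K_*(\bar x)=0$, contradicting the previous paragraph. This contradiction proves $v\le C$; the stated dependence of $C$ is exactly what makes the argument self-contained (the two-sided bound on $K$ runs the blow-up analysis, the lower bound on $(|\nabla_g K|+|\Delta_g K|)/\|\nabla^2 K\|_{C^0}$ yields a definite lower bound for $|\Delta_g K_*(\bar x)|$ once $\nabla_g K_*(\bar x)=0$, and the modulus of continuity of the normalized Hessian makes both $K_i\to K_*$ in $C^2$ and the Taylor expansion uniform).

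The main obstacle is this second-order balance. Because the exact bubble has a logarithmically divergent second moment, the dominant part of $\int_{\mathbb S^2}d_g(x,x_i)^2 e^{u_i}\,dV_g$ comes from the whole intermediate range of scales between $e^{-M_i/2}$ and a fixed radius rather than from the concentration core, and one must control this range --- and, simultaneously, the location of $x_i$ relative to $\bar x$ and all remainder terms in the expansion --- precisely enough that the quadratic term genuinely dominates with a nonzero coefficient. This is exactly what Theorem \ref{onept} alone does not provide and what the B\^ocher type theorem and the resulting optimal decay estimate are designed to supply.
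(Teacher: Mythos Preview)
Your overall architecture matches the paper's: argue by contradiction, invoke the one-point blow-up (Theorem \ref{onept}), pass to stereographic coordinates, rescale and apply the Liouville theorem of \cite{LLL2} to obtain bubble convergence, prove the sharp decay (Lemma \ref{optimaldecay}) via the B\^ocher theorem, and then feed a Taylor expansion into the Kazdan-Warner identities for both translation-type and dilation-type conformal Killing fields to force $\nabla K=0$ and $\Delta K=0$ at the blow-up point. So the strategy is right.

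The gap is in how you pass from the first-order to the second-order step. Your first-order argument (weak convergence $e^{u_i}dV_g\rightharpoonup m\,\delta_{\bar x}$, hence $X(K_*)(\bar x)=0$ for all conformal Killing $X$) yields only $\nabla K_i(\bar x)=o(1)$. In the dilation identity the term you call ``linear in $x-\bar x$'' then contributes, after integration, a quantity of size $o(1)\cdot\int|x-\bar x|\,e^{u_i}\,dV_g$, and the first moment here is of order $|x_i-\bar x|+\lambda_i^{-1}$, not smaller; this swamps the second-moment term, which is only of order $\lambda_i^{-2}\ln\lambda_i$. So ``negligible because $\nabla_gK_*(\bar x)=0$'' does not close the balance, and the precise asymptotic $cM_ie^{-M_i}(1+o(1))$ you assert for the second moment is both stronger than what the optimal decay gives (that is only an upper bound in the neck) and, by itself, not enough to rescue the argument.

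The paper avoids this in two ways you should adopt. First, it centers the whole expansion at the actual maximum point (so the dilation field vanishes \emph{exactly} at the bubble center, eliminating the $|x_i-\bar x|$ error). Second, from the translation Kazdan-Warner identities it extracts the quantitative rate $|\nabla K_i(x_i)|=o_{r_0}(1)\,\lambda_i^{-1}$ (equation (\ref{deriK})); this is precisely what makes the linear remainder in the dilation identity of size $o(1)\lambda_i^{-2}$, hence genuinely dominated by the Hessian term. The paper then combines the two identities to isolate $\sum_{l,p}\partial_l\partial_pK_i(0)\big(\mu^{(i)}_{lp}-\mu^{(i)}_l\mu^{(i)}_p\big)$ and read off $\Delta K_i(0)\to 0$. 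Once you make these two adjustments --- center at $x_i$ and upgrade the first-order step to a rate --- your outline becomes the paper's proof.
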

\begin{rema}
	The $C^0$ estimates of the $\sigma_k$-Nirenberg problem under such nondegeneracy condition on $K$ were obtained in \cite{CHY} for $\sigma_2$ on $\mathbb S^4$, in \cite{LNW1} for $\sigma_k$, $n/2\leq k\leq n$, on $\mathbb{S}^n$, $n\geq 3$, and in \cite{LNW2} for $\sigma_k$, $2\leq k<n/2$, on $\mathbb S^n$ for axisymmetric functions $K$. For such detailed dependence of the constant $C$ on $K$ in dimensions $n\geq 3$, see \cite{LNW1} and \cite{LNW2}.
\end{rema}
\begin{proof}	
	Suppose not, then
	
	(i) there exist positive $C^2$ functions $K_i$ such that $|\ln K_i|$ is uniformly bounded, $\frac{1}{\|\nabla^2K_i\|_{C^0(\mathbb{S}^2)}}(|\nabla K_i|+|\Delta K_i|)$ is uniformly bounded from below by a positive constant, and $\frac{1}{\|\nabla^2K_i\|_{C^0(\mathbb{S}^2)}}\nabla^2 K_i$ is equicontinuous,
	
	(ii) there exist $C^2$ functions $v_i$ satisfying equation (\ref{eqn}) with $K$ replaced by $K_i$ and a sequence of points $\{P_i\}$ such that $v_i(P_i)=\max_{\mathbb{S}^2} v_i\rightarrow\infty$ as $i\rightarrow\infty$.
	
	We assume without loss of generality that $P_i=P$ for all $i$, and $P$ is the south pole.
	
	We claim that $\|\nabla^2 K_i\|_{C^0(\mathbb{S}^2)}$ is uniformly bounded. Indeed, if $\|\nabla^2 K_{i_j}\|_{C^0(\mathbb{S}^2)}$ is a subsequence going to $\infty$, then $\frac{1}{\|\nabla^2 K_{i_j}\|_{C^0(\mathbb{S}^2)}}K_{i_j}$ converges uniformly to $0$. By Ascoli-Arzela's theorem, this sequence is precompact in $C^2$, hence the convergence would hold in $C^2$. This contradicts with the assumption that $\frac{1}{\|\nabla^2K_i\|_{C^0(\mathbb{S}^2)}}(|\nabla K_i|+|\Delta K_i|)$ is uniformly bounded from below by a positive constant.
	
	Therefore, by (i) and the above claim, we may assume without loss of generality that $K_i$ converges in $C^2$ to some positive $C^2$ function $K_\infty$.
	
	Recall Theorem \ref{onept}, there exists $C>0$ such that
	\begin{align*}
	v_i(x)\leq C-2 \ln d_g(x,P)\text{ for all }x\in \mathbb{S}^2\backslash\{P\}.
	\end{align*}
	Let $\Phi:\mathbb{R}^2\rightarrow\mathbb{S}^2$ be the inverse of stereographic projection. $P$ is the south pole.
	\begin{align*}
	x_i&=\frac{2y_i}{1+|y|^2}, i=1,2,\\
	x_3&=\frac{|y|^2-1}{|y|^2+1}.
	\end{align*}
	Then we have
	\begin{align*}
	\sigma_2(\lambda(A^{u_i}))=K_i(\Phi(y)),\  \lambda(A^{u_i})\in\Gamma_2,\quad\text{in }\mathbb{R}^2,
	\end{align*}
	where
	\begin{align*}
	u_i(y)=v_i(x)+2\ln\frac{1}{|y|^2+1}.
	\end{align*}
	We know that $0$ is a maximum point of $u_i$, $u_i(0)\rightarrow\infty$ as $i\rightarrow\infty$.

	Let $\lambda_i=\exp(u_i(0)/2)$, and define
	\begin{align*}
	\tilde{u}_i(z)=u_i(\lambda_i^{-1}z)-u_i(0).
	\end{align*}
	Then
	\begin{align*}
	\sigma_2(\lambda(A^{\tilde{u}_i}))=K_i(\Phi(\lambda_i^{-1}z)), \ \lambda(A^{\tilde{u}_i})\in\Gamma_2,\quad\text{in }\mathbb{R}^2,
	\end{align*}
	Passing to a subsequence and apply Liouville type theorem, for every $\epsilon_i\rightarrow 0^{+}$, we can find $R_i\rightarrow\infty$ with $R_i<\epsilon_i^{-1}$ and $R_i/\lambda_i\rightarrow 0$ as $i\rightarrow\infty$, such that
	\begin{align}\label{estimateui}
	\| \tilde{u}_i(z)-2\ln\frac{1}{|z|^2+1}\|_{C^2(B_{2R_i})}<\epsilon_i.
	\end{align}
	
	Now we prove a lemma which gives the optimal decay estimate of $u_i$.
	\begin{lemm}\label{optimaldecay}
		
		There exists a constant $C$ independent of $i$ such that
		\begin{align}\label{decayesti}
		u_i(y)\leq C-u_i(0)-4\ln|y|.
		\end{align}
	\end{lemm}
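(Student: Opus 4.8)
The plan is to upgrade the crude bound of Theorem~\ref{onept} to sharp logarithmic decay by exploiting that $u_i$ is superharmonic and that essentially all of its curvature mass sits in the bubble. For $|y|\le R_i/\lambda_i$ the bound is immediate from \eqref{estimateui}: writing $u_i(y)=u_i(0)+\tilde u_i(\lambda_i y)$ and using $\lambda_i=e^{u_i(0)/2}$ one gets $u_i(y)+u_i(0)+4\ln|y|=4\ln(\lambda_i|y|)-2\ln(1+\lambda_i^2|y|^2)+O(\epsilon_i)\le C$, so only $|y|\ge R_i/\lambda_i$ needs work. I would first record two structural facts on $\mathbb R^2$. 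Since $\lambda(A^{u_i})\in\Gamma_2$ and $\sigma_2(\lambda(A^{u_i}))=K_i\circ\Phi$, the arithmetic--geometric mean inequality gives $-e^{-u_i}\Delta u_i=\sigma_1(\lambda(A^{u_i}))=2K_{g_{u_i}}\ge 2\sqrt{K_i\circ\Phi}>0$, so $\nu_i:=-\Delta u_i=2K_{g_{u_i}}e^{u_i}\ge 0$; and because $g_{u_i}$ and $g_{v_i}$ differ by a constant factor, Gauss--Bonnet yields $\int_{\mathbb R^2}\nu_i=2\int_{\mathbb S^2}K_{g_{v_i}}\,dV_{g_{v_i}}=8\pi$. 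On the other hand, rescaling by $\lambda_i$ and using \eqref{estimateui} together with the Liouville theorem of \cite{LLL2}, $\nu_i\big(B_{R_i/\lambda_i}\big)\to 8\pi$, so $\eta_i:=8\pi-\nu_i\big(B_{R_i/\lambda_i}\big)=\nu_i\big(\mathbb R^2\setminus B_{R_i/\lambda_i}\big)\to 0$.

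The core of the argument is then the following. Let $m_i(r):=\frac1{2\pi r}\int_{\partial B_r}u_i$ denote the spherical average; from $m_i'(r)=\frac1{2\pi r}\int_{B_r}\Delta u_i=-\frac{\nu_i(B_r)}{2\pi r}$ and $8\pi-\eta_i\le\nu_i(B_r)\le 8\pi$ for $r\ge R_i/\lambda_i$, integrating from $R_i/\lambda_i$ and using $m_i(R_i/\lambda_i)=u_i(0)-4\ln R_i+O(\epsilon_i)$ (the bubble is nearly radial there) one obtains $m_i(r)\le -u_i(0)-4\ln r+C+\tfrac{\eta_i}{2\pi}\ln\frac{r\lambda_i}{R_i}$. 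The Harnack inequality for \eqref{feqnR2} — which is exactly the inequality $\sup_{B_{2\rho}}u\le C+\inf_{B_{2\rho}}u$ established inside the proof of Theorem~\ref{grad2}, applied at scale $r$ to $u_i(r\cdot)+2\ln r$, whose right-hand side $K_i(\Phi(r\cdot))$ lies in $[c_1,C_2]$ — then gives $\max_{\partial B_r}u_i\le m_i(r)+C$, hence $u_i(y)\le -u_i(0)-4\ln|y|+C+\tfrac{\eta_i}{2\pi}\ln\frac{|y|\lambda_i}{R_i}$ for $R_i/\lambda_i\le |y|\le 1$. For $|y|>1$ I would use that $u_i+4\ln|y|$ is superharmonic off the origin, so $m_i(r)+4\ln r$ is nondecreasing and bounded above (by Theorem~\ref{onept} and Proposition~\ref{Asymptotic behavior-1} its limit $c_i=\lim_{|y|\to\infty}(u_i+4\ln|y|)$ is finite), whence $u_i(y)\le m_i(|y|)+C\le c_i-4\ln|y|+C$, and $c_i\le -u_i(0)+C$ by the $|y|=1$ case together with the identity $c_i=\bigl(m_i(R_i/\lambda_i)+4\ln\tfrac{R_i}{\lambda_i}\bigr)+\tfrac1{2\pi}\int_{|z|>R_i/\lambda_i}\ln\tfrac{|z|\lambda_i}{R_i}\,d\nu_i$. (Alternatively one can run the same bookkeeping through the Green-potential representation $u_i(y)=c_i-\tfrac1{2\pi}\int\ln|y-z|\,d\nu_i(z)$, evaluating at $y=0$ to get $c_i=-u_i(0)+O(1)$ from $\int_{B_{R_i/\lambda_i}}\ln|z|\,d\nu_i=-8\pi\ln\lambda_i+O(1)$, and bounding the potential below by splitting at $|z|=|y|/2$.)

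The only genuinely delicate point, and the one I expect to be the main obstacle, is absorbing the error term $\tfrac{\eta_i}{2\pi}\ln\frac{|y|\lambda_i}{R_i}$: over the long neck $R_i/\lambda_i\le|y|\le 1$ the logarithm is comparable to $u_i(0)$, so one needs $\eta_i\, u_i(0)\to 0$, and similarly one must show the curvature mass lying outside $B_{R_i/\lambda_i}$ (equivalently, $\int_{|z|\ge R_i/\lambda_i}e^{u_i}$, which one controls via Lemma~\ref{vol} and the volume bound $\mathrm{Vol}_{g_{v_i}}(\mathbb S^2)\le 4\pi/\sqrt{c_1}$ coming from Gauss--Bonnet and $\sigma_1\ge 2\sqrt{\sigma_2}$) is small enough after being weighted by $\ln(\lambda_i/R_i)$. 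This is precisely where one uses the freedom, built into \eqref{estimateui}, to choose $\epsilon_i$ (and with it $R_i$, subject to $R_i<\epsilon_i^{-1}$ and $R_i/\lambda_i\to 0$) small relative to the already-fixed sequence $u_i(0)$, together with the quantitative statement $\eta_i=O(\epsilon_i+R_i^{-2})$ extracted from the $C^2$ closeness of $\tilde u_i$ to the standard bubble on $B_{2R_i}$. With such a choice the displayed inequalities become \eqref{decayesti}.
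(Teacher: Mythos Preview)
Your strategy---superharmonicity of $u_i$, Gauss--Bonnet mass $8\pi$, and the spherical-average ODE $m_i'(r)=-\nu_i(B_r)/(2\pi r)$---is the natural linear attack, and for the classical Nirenberg problem it works because there $-\Delta u_i = 2K_i e^{u_i}$ with $K_i$ bounded, so the curvature measure outside the bubble is controlled by the outside volume. Here, however, $-\Delta u_i = \sigma_1(\lambda(A^{u_i}))\,e^{u_i}$, and $\sigma_1$ is \emph{not} bounded in terms of $\sigma_2$ on $\Gamma_2$; hence Lemma~\ref{vol} does not control $\nu_i$ outside the bubble, and the ``delicate point'' you flag is in fact a genuine gap. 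Concretely, the boundary flux computation gives $\eta_i = \tfrac{8\pi}{1+R_i^2} + O(\epsilon_i R_i)$, not $O(\epsilon_i + R_i^{-2})$ as you state (the $C^2$-closeness of $\tilde u_i$ to the bubble bounds $|\partial_\rho\tilde u_i - \partial_\rho U|$ by $\epsilon_i$ pointwise on $\partial B_{R_i}$, a circle of length $2\pi R_i$). Your bookkeeping then needs $\eta_i\,u_i(0)=O(1)$, i.e.\ both $R_i^{-2}u_i(0)\to 0$ and $\epsilon_i R_i\,u_i(0)\to 0$. But in \eqref{estimateui} one does not choose $\epsilon_i$ and $R_i$ independently of the solutions: $R_i$ is dictated by the (non-quantitative) rate of $C^2_{\mathrm{loc}}$ convergence of $\tilde u_i$, which is in no way tied to $u_i(0)$. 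So there is no mechanism to force $R_i^2\gg u_i(0)$ or $\epsilon_i R_i\,u_i(0)\to 0$, and the same defect recurs in your Green-potential alternative (the claim $\int_{B_{R_i/\lambda_i}}\ln|z|\,d\nu_i=-8\pi\ln\lambda_i+O(1)$ already hides a term $\tfrac12\eta_i\,u_i(0)$ and an error $O(\epsilon_i R_i^2\ln R_i)$).

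The paper bypasses this entirely by using the B\^ocher-type Theorem~\ref{Bocher1}, a nonlinear ingredient specific to $\Gamma_2$. For each $r\ge R_i/\lambda_i$ one rescales $\bar u_i(\xi)=u_i(2r\xi)+2\ln r$ on $B_1$, so that $\lambda(A^{\bar u_i})\in\Gamma_2$. Choosing $\xi_i$ with $|\xi_i|=(2\lambda_i r)^{-1}$ one reads off from \eqref{estimateui} that $\bar u_i(0)+2\ln|\xi_i|$ and $\bar u_i(\xi_i)+2\ln|\xi_i|$ differ by a fixed positive amount; equivalently, $w_i=e^{-\bar u_i/4}$ satisfies $|w_i(0)-w_i(\xi_i)|\ge c\,|\xi_i|^{1/2}$. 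The Lipschitz estimate of Theorem~\ref{Bocher1} then forces $\max_{\partial B_{1/2}}w_i \ge c/|\xi_i|^{1/2} = c/w_i(0)$, i.e.\ $\min_{\partial B_r}u_i + u_i(0) + 4\ln r \le C$, which together with the Harnack inequality is \eqref{decayesti}. No tracking of $\eta_i$ is needed.
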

	\begin{proof}
		By Harnack inequality and the above estimate, it suffices to show
		\begin{align*}
		\min_{\partial B_r} u_i(y)\leq C-u_i(0)-4\ln r,\ \forall\ r\geq R_i/\lambda_i,
		\end{align*}
		where $\lambda_i$ and $R_i$ is defined above.
		
		For any $r\geq R_i/\lambda_i$, define $\bar{u}_i(\xi)=u_i(2r\xi)+2\ln r$, $\forall\ \xi\in B_1$. Then $\lambda(A^{\bar{u}_i})\in \Gamma_2$ in $B_1$. Fix some $\xi_i$ such that $|\xi_i|=\frac{1}{2\lambda_i r}$, then
		\begin{align}\label{uibar1}
		\bar{u}_i(0)+2\ln |\xi_i|=-2\ln 2.
		\end{align}
		Moreover, by (\ref{estimateui}),
		\begin{align*}
		\bar{u}_i(\xi_i)+2\ln |\xi_i|=\tilde{u}_i(\frac{\xi_i}{|\xi_i|})-2\ln 2=-4\ln2+o(1).
		\end{align*}
		Therefore, for some $i_0$ independent of $r$, 
		\begin{align}\label{uibar2}
		\bar{u}_i(\xi_i)+2\ln |\xi_i|\leq -3\ln 2\text{ for all }i\geq i_0.
		\end{align}
		Since $|\xi_i|\leq\frac{1}{2R_i}\leq\frac{1}{2}$, by (\ref{uibar1})(\ref{uibar2}) and the B\^ocher type theorem Theorem \ref{Bocher1}, for $w_i=\exp(-\bar{u}_i/4)$,
		\begin{align*}
		\max_{\partial B_{1/2}}w\geq \frac{|w_i(0)-w_i(\xi_i)|}{C|\xi_i|}\geq \frac{1}{C|\xi_i|^{1/2}}=\frac{1}{Cw_i(0)}.
		\end{align*}
		Back to $u_i$, then we obtain (\ref{decayesti}).
	\end{proof}
	Now return to the proof of $C^0$ estimate. Let
	\begin{align*}
	\tilde{K}_i=\frac{1}{\|\nabla^2K_i\|_{C^0(\mathbb{S}^2)}}(K_i\circ\Phi-K_i(P)).
	\end{align*}
	By Kazdan-Warner type identity (\ref{KazdanWarner}),
	\begin{align}\label{KW1}
	0=\frac{1}{\|\nabla^2K_i\|_{C^0(\mathbb{S}^2)}}\int_{\mathbb{R}^2}\partial_l\sigma_2(\lambda(A^{u_i}))e^{u_i}dy=\int_{\mathbb{R}^2}\partial_l\tilde{K}_ie^{u_i}dy,\quad l=1,2,
	\end{align}
	and
	\begin{align}\label{KW2}
	0=\frac{1}{\|\nabla^2K_i\|_{C^0(\mathbb{S}^2)}}\sum_{l=1}^{2}\int_{\mathbb{R}^2}y_l\partial_l\sigma_2(\lambda(A^{u_i}))e^{u_i}dy=\int_{\mathbb{R}^2}y\cdot\nabla\tilde{K}_ie^{u_i}dy.
	\end{align}
	Fix some $r_0>0$. Note that (i) implies that $\tilde{K}_i$ is precompact in $C^2$, and it follows that $|\nabla \tilde{K}_i(y)|=\frac{O(1)}{1+|y|^2}$ on $\mathbb{R}^2$. Thus by (\ref{decayesti}) and (\ref{KW1}),
	\begin{align}\label{est2}
	0=\int_{|y|\leq r_0}\partial_l\tilde{K}_ie^{u_i}dy+O(\lambda_i^{-2}),\quad l=1,2.
	\end{align}
	By (\ref{decayesti}) and (\ref{estimateui}), if $q:\mathbb{R}^2\backslash\{0\}\rightarrow\mathbb{R}$ is a homogeneous function of degree $d\in [0,2)$, then
	\begin{align}\label{limid}
	\lim_{i\rightarrow\infty}\lambda_i^d\int_{|y|\leq r_0}q(y)e^{u_i(y)}dy=\int_{\mathbb{R}^2}\frac{q(z)}{(1+|z|^2)^2}dz.
	\end{align}
	Using Taylor's theorem, we write
	\begin{align*}
	\partial_l\tilde{K}_i(y)=\partial_l\tilde{K}_i(0)+\sum_{p=1}^{2}\partial_{p}\partial_{l}\tilde{K}_i(0)y_p+o_{r_0}(1)|y|\quad \text{for }y\leq|r_0|,
	\end{align*}
	where $o_{r_0}(1)\rightarrow 0$ as $r_0\rightarrow 0$. Plugging this into (\ref{est2}), using also (\ref{limid}),
	\begin{align}\label{taylor1}
	0=M^{(i)}\partial_l\tilde{K}_i(0)+M^{(i)}\sum_{p=1}^{2}\partial_{p}\partial_{l}\tilde{K}_i(0)\mu_p^{(i)}+o_{r_0}(1)\lambda_i^{-1},\quad l=1,2,
	\end{align}
	where $M^{(i)}$ and $\mu_p^{(i)}$ are given by
	\begin{align*}
	M^{(i)}=\int_{|y|\leq r_0}e^{u_i}dy\geq C,
	\end{align*}
	\begin{align}\label{mu_p}
	\mu_p^{(i)}=\frac{1}{M^{(i)}}\int_{|y|\leq r_0}y_pe^{u_i}dy=\frac{o(1)}{\lambda_i}.
	\end{align}
	Therefore, we have
	\begin{align}\label{deriK}
	|\nabla K_i(0)|=\frac{o_{r_0}(1)}{\lambda_i}\quad\text{as }i\rightarrow\infty.
	\end{align}
	For (\ref{KW2}), by the same argument, we have
	\begin{align}\label{taylor2}
	0=M^{(i)}\sum_{l=1}^{2}\partial_l\tilde{K}_i(0)\mu_l^{(i)}+M^{(i)}\sum_{l,p=1}^{2}\partial_{p}\partial_{l}\tilde{K}_i(0)\mu_{lp}^{(i)}+o_{r_0}(1)\lambda_i^{-2},\quad l=1,2,
	\end{align}
	where
	\begin{align}\label{mu_pi}
	\mu_{lp}^{(i)}=\frac{1}{M^{(i)}}\int_{|y|\leq r_0}y_ly_pe^{u_i}dy=\frac{o(1)}{\lambda_i}=\frac{O(1)\delta_{lp}+o(1)}{\lambda_i^2}.
	\end{align}
	Combine (\ref{taylor1}) and (\ref{taylor2}) we obtain
	\begin{align*}
	\sum_{l,p=1}^{2}\partial_{p}\partial_{l}\tilde{K}_i(0)(\mu_{lp}^{(i)}-\mu_{l}^{(i)}\mu_{p}^{(i)})=o_{r_0}(1)\lambda_i^{-2}.
	\end{align*}
	By (\ref{mu_p}) and (\ref{mu_pi}), we obtain
	\begin{align*}
	\Delta_{g_{\mathbb{R}^2}} \tilde{K}_i(0)=o_{r_0}(1)\quad \text{as }i\rightarrow\infty.
	\end{align*}
	This together with (\ref{deriK}) implies $\frac{1}{\|\nabla^2K_i\|_{C^0(\mathbb{S}^2)}}(|\nabla K_i|+|\Delta K_i|)(0)$ converges to $0$ as $i\rightarrow\infty$, which is a contradiction to (i).

\end{proof}

Now evaluate equation (\ref{eqn}) at a maximum point $\bar{x}$ of $u$. In the following, we use $\tilde{C}$ to denote some positive constant depending only on an upper bound of $K$ that may vary from line to line.
\begin{align*}
\tilde{C}\geq K(\bar{x})=\sigma_2(g_u^{-1}A_{g_u})(\bar{x})\geq \sigma_2(e^{-u}g^{-1}A_{g})(\bar{x})=e^{-2u(\bar x)}.
\end{align*}
Hence
\begin{align*}
\max_{\mathbb{S}^2} u\geq -\tilde C.
\end{align*}
By the gradient estimate Theorem \ref{grad2}, $|\nabla u|\leq C$ where $C$ depends only on $\max_{\mathbb{S}^2} u$ and $\|K\|_{C^1(\mathbb{S}^2)}$, so 
\begin{align*}
\min_{\mathbb{S}^2} u\geq  -C.
\end{align*}
The $C^0$ estimate is now proved.
\medskip
\subsection{Degree Theory}
\ 

In this section, we prove Theorem \ref{Niren} using degree theories. The proof is by now standard, so we skip some computations here. For details, see \cite{LNW}.

Fix some $0<\alpha'\leq\alpha<1$. We first assume $K\in C^{2,\alpha}(\mathbb{S}^2)$, and the case $K\in C^2(\mathbb{S}^2)$ can be obtained by approximation.

For $\mu\in[0,1]$, denote $K_\mu=\mu K+(1-\mu)/4$. Consider the equation
\begin{align}\label{sigma2eqn2}
\sigma_2(\lambda(A_{g_v}))=K_{\mu},\quad\lambda(A_{g_v})\in\Gamma_2\quad \text{on }\mathbb{S}^2.
\end{align}
By Theorem \ref{grad2}, Theorem \ref{C2} and Proposition \ref{C0esti}, we can choose $C_1$ sufficiently large such that all solutions to (\ref{sigma2eqn2}) belongs to the set
\begin{align}\label{setO}
\mathcal{O}=\{\tilde{v}\in C^{4,\alpha'}(\mathbb{S}^2):\|\tilde{v}\|_{C^{4,\alpha'}(\mathbb{S}^2)}<C_1,\lambda(A_{g_{\tilde{v}}})\in\Gamma_2\}.
\end{align}

Consider the operator $F_\mu:\mathcal{O}\rightarrow C^{2,\alpha'}(\mathbb{S}^2)$ defined by
\begin{align}\label{Fmu}
F_\mu[v]:=\sigma_2(\lambda(A_{g_v}))-K_\mu,\quad\forall v\ \in\mathcal{O}.
\end{align}
By \cite{L89}, the degree $\deg(F_\mu,\mathcal{O},0)$ is well-defined and independent of $\mu\in (0,1]$. By \cite[Theorem B.1]{L95}, it is also independent of $\alpha'\in (0,\alpha]$. Therefore, it suffices to compute the degree for small $\mu$ and some $\alpha'\in (0,\alpha)$.

For $P\in\mathbb{S}^2$ and $1\leq t<\infty$, we can define a M\"obius transformation on $\mathbb{S}^2$ by sending $y$ to $ty$ where $y$ is the stereographic projection coordinates of points and the projection is performed with $P$ as the north pole to the equatorial
plane of $\mathbb{S}^2$.

For any M\"obius transformation $\varphi:\mathbb{S}^2\rightarrow \mathbb{S}^2$ and a function $v$ defined on $\mathbb{S}^2$, denote
\begin{align*}
T_\varphi(v):=v\circ\varphi+4\ln |J_\varphi|
\end{align*}
where $J_\varphi$ denotes  the Jacobian of $\varphi$.

Let $B$ denote the open unit ball in $\mathbb{R}^3$ and let
\begin{align*}
\mathscr{S}_0=\{v\in C^{4,\alpha'}(\mathbb{S}^2):\int_{\mathbb{S}^2}xe^{v(x)}dv_{g}(x)=0\}.
\end{align*}
Notice that $0\in\mathscr{S}_0$ corresponds to the standard bubble on $\mathbb{S}^2$.

For $w\in\mathscr{S}_0$ and $\xi\in B$, define $\pi(w,\xi)$ to be:
\begin{align*}
\pi(w,\xi)=T_{\varphi_{P,t}^{-1}}(w),\quad\text{where }P=\frac{\xi}{|\xi|}\text{ and }t=(1-|\xi|)^{-1}\text{ when }\xi\neq 0,
\end{align*}
and $\pi(w,0)=w$.

The following lemma implies that $\pi$ gives a parametrization of $C^{4,\alpha'}(\mathbb{S}^2)$ with parameters in $\mathscr{S}_0\times B$.
\begin{lemm}
	The map $\pi:\mathscr{S}_0\times B\rightarrow C^{4,\alpha'}(\mathbb{S}^2)$ is a $C^2$ diffeomorphism.
\end{lemm}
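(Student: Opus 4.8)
The plan is to recognize $\pi$ as the inverse of the standard conformal normalization of conformal factors on $\mathbb{S}^2$, and to establish smoothness of that inverse via the implicit function theorem together with the classical non-degeneracy of the conformal barycenter. A preliminary point is that $\mathscr{S}_0$ is a $C^\infty$ Banach submanifold of $C^{4,\alpha'}(\mathbb{S}^2)$ of codimension $3$: the constraint $\Theta(v):=\int_{\mathbb{S}^2}x\,e^{v}\,dv_g\in\mathbb{R}^3$ has derivative $d\Theta_v(\delta)=\int_{\mathbb{S}^2}x\,\delta\,e^{v}\,dv_g$, which is onto $\mathbb{R}^3$ because $\sum_i c_ix_i\equiv 0$ on $\mathbb{S}^2$ forces $c=0$; hence $T_w\mathscr{S}_0=\{\delta\in C^{4,\alpha'}:\int_{\mathbb{S}^2}x\,\delta\,e^{w}\,dv_g=0\}$, and $\mathscr{S}_0\times B$ is a $C^\infty$ Banach manifold whose tangent space at each point is abstractly isomorphic to $C^{4,\alpha'}(\mathbb{S}^2)$, matching the target.

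\emph{Smoothness of $\pi$ and reduction.} The assignment $\xi\mapsto\varphi_{P,t}$, with $P=\xi/|\xi|$ and $t=(1-|\xi|)^{-1}$, is the classical real-analytic parametrization of the ``hyperbolic translations'' of $\mathbb{S}^2$ by the ball $B$ — real-analytic including at $\xi=0$, where it is the identity for every $P$ — and composition with a fixed smooth diffeomorphism, together with $\xi\mapsto\ln|J_{\varphi_{P,t}^{-1}}|$, are smooth operations into $C^{4,\alpha'}(\mathbb{S}^2)$; thus $\pi\in C^\infty\subset C^2$. Since $T$ is a group action, $\pi(w,\xi)=v$ is equivalent to $w=T_{\varphi_{P,t}}(v)$ together with $w\in\mathscr{S}_0$, i.e. to
\begin{align*}
\Xi(v,\xi):=\int_{\mathbb{S}^2}x\,e^{T_{\varphi_{P,t}}(v)}\,dv_g=0 .
\end{align*}
So it suffices to prove that for every $v\in C^{4,\alpha'}(\mathbb{S}^2)$ this equation has a unique root $\xi=\xi(v)\in B$, depending $C^2$ on $v$.

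\emph{Existence, uniqueness, non-degeneracy.} For existence: as $\xi\to\xi_0\in\partial B$ the measure $e^{T_{\varphi_{P,t}}(v)}\,dv_g$ concentrates at a single point, so the normalized barycenter $\Xi(v,\xi)\big/\!\int e^{T_{\varphi_{P,t}}(v)}\,dv_g$ extends continuously to $\overline{B}$, with boundary values sweeping out $\partial B$ homeomorphically; hence its Brouwer degree relative to $0$ is $\pm1$ and a root exists. For uniqueness and non-degeneracy, by conformal invariance it is enough to treat $v$ with $\xi(v)=0$, and there a short computation — integration by parts, the spherical identities $\Delta_{\mathbb{S}^2}x_j=-2x_j$ and $\langle\nabla x_i,\nabla x_j\rangle=\delta_{ij}-x_ix_j$, and the fact that the infinitesimal generators of $\{\varphi_{P,t}\}$ at $\xi=0$ are the gradients of the first spherical harmonics $x_j$ — shows that the $(i,j)$ entry of $\partial_\xi\Xi(v,0)$ equals $\pm\big[\big(\int_{\mathbb{S}^2}e^{v}\,dv_g\big)\,\delta_{ij}+c_0\!\int_{\mathbb{S}^2}x_ix_j\,e^{v}\,dv_g\big]$ for a universal constant $c_0>0$, a matrix that is positive definite since its quadratic form on $c\in\mathbb{R}^3$ is $\int_{\mathbb{S}^2}\big(|c|^2+c_0(c\cdot x)^2\big)e^{v}\,dv_g>0$. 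Hence $\partial_\xi\Xi(v,\cdot)$ is invertible at every root; global uniqueness of $\xi(v)$ then follows from this non-degeneracy together with the degree count, or alternatively from the classical Hersch-type maximum principle argument for the conformal barycenter.

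\emph{Conclusion and obstacle.} Invertibility of $\partial_\xi\Xi$ at $\xi(v)$ and the implicit function theorem in Banach spaces make $v\mapsto\xi(v)$ a $C^2$ map $C^{4,\alpha'}(\mathbb{S}^2)\to B$, so $v\mapsto\big(T_{\varphi_{\xi(v)}}(v),\,\xi(v)\big)$ is a $C^2$ two-sided inverse of $\pi$; equivalently one verifies directly that $D\pi_{(w,\xi)}\colon T_w\mathscr{S}_0\times\mathbb{R}^3\to C^{4,\alpha'}(\mathbb{S}^2)$ is an isomorphism — $D_w\pi(\delta)=\delta\circ\varphi_{P,t}^{-1}$ being an isomorphism onto a closed subspace of codimension $3$, and the $3$-dimensional image of $D_\xi\pi$ being transverse to it by the positive-definiteness above — and then applies the inverse function theorem pointwise together with global injectivity. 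The main obstacle is precisely this uniqueness/non-degeneracy statement for the conformal barycenter; I expect the cleanest route is the conformal-invariance reduction to $\xi(v)=0$ followed by the explicit, manifestly definite evaluation of $\partial_\xi\Xi(v,0)$, with the concentration-plus-degree argument supplying existence.
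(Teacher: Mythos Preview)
The paper does not actually prove this lemma: Section 4.4 opens by declaring the degree-theoretic argument ``by now standard'' and referring the reader to \cite{LNW} (and implicitly \cite{L95}, \cite{LNW1}) for details, after which the lemma is simply stated. So there is no ``paper's own proof'' to compare against; what you have written is precisely the standard conformal-barycenter argument that those references contain.

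Your outline is correct. A couple of small points worth tightening. First, the sign bookkeeping in the degree/uniqueness step deserves one explicit sentence: you reduce by conformal invariance to a root at $\xi=0$, and there your computation gives $\partial_\xi\Xi(v,0)$ definite with a fixed sign; by the same conformal covariance, every root of $\Xi(v,\cdot)$ has Jacobian of that same sign, so every local degree is $-1$, matching the boundary degree $-1$ of the antipodal extension --- hence exactly one root. Second, with the paper's convention $T_\varphi(v)=v\circ\varphi+4\ln|J_\varphi|$ one indeed gets $\partial_{\xi_j}\Xi_i(v,0)=-\big(\int e^v\big)\delta_{ij}-5\int x_ix_je^v$, so your ``$c_0>0$'' is $c_0=5$; had the coefficient in $T_\varphi$ been the natural conformal weight $1$ one would get $c_0=-1$, but the matrix is still (negative) definite, so definiteness does not hinge on that constant. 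With those two clarifications the argument is complete.
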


Next lemma gives a property of $\pi$: for every given tubular neighborhood $\pi(\mathcal{N}\times B)$ of $\pi(\{0\}\times B)$, all solutions of (\ref{sigma2eqn2}) belongs to $\pi(\mathcal{N}\times B)$ provided that $\mu$ is sufficiently small.
\begin{lemm}
	Let $0<\alpha'<\alpha<1$. Suppose that $K\in C^{2,\alpha}(\mathbb{S}^2)$ satisfies the nondegeneracy condition (\ref{Knondege}). If $v_{\mu_j}=\pi(w_{\mu_j},\xi_{\mu_j})$ solves (\ref{sigma2eqn2}) for some sequence $\mu_j\rightarrow 0^{+}$, then $\xi_{\mu_j}$ stays in a compact subset of $B$ and
	\begin{align*}
	\lim_{j\rightarrow\infty}\|w_{\mu_j}\|_{C^{4,\alpha'}(\mathbb{S}^2)}=0.
	\end{align*}
\end{lemm}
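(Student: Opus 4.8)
The plan is a compactness argument: combine the uniform‑in‑$\mu$ a priori estimates from Sections \ref{degliou}--\ref{existence} with the Liouville classification underlying the construction of $\pi$, and then use that $\pi$ is a diffeomorphism to read off the conclusion. Throughout, $v_{\mu_j}=\pi(w_{\mu_j},\xi_{\mu_j})$ solves (\ref{sigma2eqn2}) with $\mu=\mu_j\to0^+$.

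\textbf{Step 1 (a uniform $C^{4,\alpha}$ bound).} The key observation is that for every $\mu\in(0,1]$ the deformed curvature $K_\mu=\mu K+(1-\mu)/4$ satisfies the nondegeneracy condition (\ref{Knondege}) \emph{and} the three quantities on which the constant in Proposition \ref{C0esti} depends are independent of $\mu$: since $\nabla K_\mu=\mu\nabla K$, $\Delta K_\mu=\mu\Delta K$, $\nabla^2K_\mu=\mu\nabla_g^2K$ (and $\nabla_g^2K\not\equiv0$, as $K$ is non-constant), one has $|\ln K_\mu|$ bounded in terms of $\min_{\mathbb{S}^2}K$ and $\max_{\mathbb{S}^2}K$ only, $\frac{|\nabla K_\mu|+|\Delta K_\mu|}{\|\nabla^2K_\mu\|_{C^0(\mathbb{S}^2)}}=\frac{|\nabla K|+|\Delta K|}{\|\nabla^2K\|_{C^0(\mathbb{S}^2)}}$, and $\frac{\nabla^2K_\mu}{\|\nabla^2K_\mu\|_{C^0(\mathbb{S}^2)}}=\frac{\nabla_g^2K}{\|\nabla_g^2K\|_{C^0(\mathbb{S}^2)}}$. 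Hence Proposition \ref{C0esti} gives $v_{\mu_j}\le C$ with $C$ independent of $j$; evaluating (\ref{sigma2eqn2}) at a maximum point of $v_{\mu_j}$, exactly as at the end of the proof of Proposition \ref{C0esti}, gives $v_{\mu_j}\ge -C$; Theorem \ref{grad2} then gives a uniform gradient bound and Theorem \ref{C2} a uniform bound on $\nabla^2v_{\mu_j}$. Since $\sigma_2(\lambda(A_{g_{v_{\mu_j}}}))=K_{\mu_j}$ is bounded above and below away from $0$ and $|\lambda(A_{g_{v_{\mu_j}}})|$ is bounded, $\lambda(A_{g_{v_{\mu_j}}})$ stays in a fixed compact subset of $\Gamma_2$, so the equation is uniformly elliptic and concave in the appropriate variables; Evans--Krylov and Schauder estimates, using $K\in C^{2,\alpha}$, upgrade this to $\|v_{\mu_j}\|_{C^{4,\alpha}(\mathbb{S}^2)}\le C$ (this is the same bound that makes the set $\mathcal{O}$ well defined).

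\textbf{Step 2 (limit and Liouville).} Being bounded in $C^{4,\alpha}$, $\{v_{\mu_j}\}$ is precompact in $C^{4,\alpha'}$; along any subsequence $v_{\mu_j}\to v_*$ in $C^{4,\alpha'}$. Letting $\mu_j\to0^+$ in (\ref{sigma2eqn2}) and using $K_{\mu_j}\to 1/4$ uniformly, $v_*$ solves $\sigma_2(\lambda(A_{g_{v_*}}))=\tfrac14$ with $\lambda(A_{g_{v_*}})\in\overline{\Gamma_2}$; since $\sigma_2=\tfrac14>0$ while $\sigma_2\equiv0$ on $\partial\Gamma_2$, in fact $\lambda(A_{g_{v_*}})\in\Gamma_2$. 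Pulling back by stereographic projection turns $v_*$ into an entire solution of the M\"obius invariant equation on $\mathbb{R}^2$, so by the Liouville theorem of \cite{LLL2} it is a standard bubble; equivalently, by the construction of $\mathscr{S}_0$ and $\pi$ (and the choice of the term $(1-\mu)/4$), $v_*\in\pi(\{0\}\times B)$, say $v_*=\pi(0,\xi_*)$ with $\xi_*\in B$.

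\textbf{Step 3 (conclusion) and the main difficulty.} By the preceding lemma $\pi$ is a $C^2$ diffeomorphism, so $\pi^{-1}:C^{4,\alpha'}(\mathbb{S}^2)\to\mathscr{S}_0\times B$ is continuous, and therefore $(w_{\mu_j},\xi_{\mu_j})=\pi^{-1}(v_{\mu_j})\to(0,\xi_*)$ along the subsequence. A routine contradiction argument removes the passage to a subsequence: if $\|w_{\mu_j}\|_{C^{4,\alpha'}(\mathbb{S}^2)}\not\to0$, choose a subsequence along which it is bounded below by a positive constant and derive a contradiction from Steps 1--2; likewise, if $\{\xi_{\mu_j}\}$ failed to lie in a compact subset of $B$, a subsequence with $|\xi_{\mu_j}|\to1$ would, by Steps 1--2, converge to some $\pi(0,\xi_*)$ with $|\xi_*|<1$, forcing $\xi_{\mu_j}\to\xi_*$, a contradiction. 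The only genuinely delicate point is the uniformity of the a priori estimates as $\mu\to0^+$ — in particular that the constant in Proposition \ref{C0esti} does not degenerate; this is exactly why the deformation is taken as $K_\mu=\mu K+(1-\mu)/4$, so that the factors of $\mu$ cancel in the scale‑invariant quantities controlling that constant, and everything else is standard compactness plus the Liouville classification already in hand.
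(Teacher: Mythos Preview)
Your argument is correct and is precisely the standard route the paper has in mind (the paper does not spell out a proof here and defers to \cite{LNW}): uniform-in-$\mu$ a priori estimates from Proposition \ref{C0esti}, Theorems \ref{grad2}--\ref{C2}, and Evans--Krylov/Schauder put all $v_{\mu_j}$ in a fixed $C^{4,\alpha}$ ball; any $C^{4,\alpha'}$ limit solves the constant-curvature equation and by the Liouville theorem of \cite{LLL2} lies in $\pi(\{0\}\times B)$; then the diffeomorphism lemma for $\pi$ converts convergence of $v_{\mu_j}$ into convergence of $(w_{\mu_j},\xi_{\mu_j})$ and rules out $|\xi_{\mu_j}|\to1$. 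Your remark that the $\mu$-factors cancel in the scale-invariant quantities governing the constant of Proposition \ref{C0esti} is exactly the point that makes the homotopy $K_\mu$ admissible.
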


Let $\mathscr{L}$ be the linearized operator
of $F_\mu[\pi(\cdot,\xi)]$ at $\bar{w}\equiv 0$, with the domain $D(\mathscr{L})$  being the tangent plane to $\mathscr{S}_0$ at $w=\bar w$, i.e. 
\begin{align*}
D(\mathscr{L})=\{\eta\in C^{4,\alpha'}(\mathbb{S}^2):\int_{\mathbb{S}^2}x\eta(x) dv_{g}(x)=0\}.
\end{align*}
By implicit function theorem, $\mathscr{S}_0$ is represented near $0$ as a graph over $D(\mathscr{L})$. It is well-known that $\mathscr{L}$ gives an isomorphism from $D(\mathscr{L})$ to
\begin{align*}
R(\mathscr{L}):=\{f\in C^{2,\alpha'}(\mathbb{S}^2):\int_{\mathbb{S}^2}xf(x) dv_{g}(x)=0\}.
\end{align*}
Let $\Pi$ be a projection from $C^{2,\alpha'}(\mathbb{S}^2)$ to $R(\mathscr{L})$ given by
\begin{align*}
\Pi f(x)=f(x)-\frac{3}{4\pi}x\cdot \int_{\mathbb{S}^2}yf(y) dv_{g}(y).
\end{align*}
The following proposition implies that for every given $\xi\in B$, there exists a unique $w_{\xi,\mu}\in\mathcal{N}$ such that the $\mathscr{S}_0$-component of $F_\mu[\pi(w_{\xi,\mu},\xi)]$ is zero.
\begin{prop}
	Let $0<\alpha'<\alpha<1$. Suppose that $K\in C^{2,\alpha}(\mathbb{S}^2)$ and $F_\mu$ is defined by (\ref{Fmu}). Then for every $s_0\in(0,1)$, there exists a constant $\mu_0\in (0,1]$ and a neighborhood $\mathcal{N}$ of $1\in\mathscr{S}_0$ such that, for every $\mu\in (0,\mu_0]$, and $\xi\in \bar{B}_{s_0}\subset B$, there exists a unique $w_{\xi,\mu}\in\mathcal{N}$ depending smoothly on $(\xi,\mu)$ satisfying
	\begin{align*}
	\Pi(F_\mu[\pi(w_{\xi,\mu},\xi)])=0,
	\end{align*}
	Furthermore, there exists some $C > 0$ such that, for $\mu\in(0, \mu_0]$ and $|\xi|$, $|\xi'|\leq s_0$,
	\begin{align*}
	\|w_{\xi,\mu}\|_{C^{4,\alpha'}(\mathbb{S}^2)}&\leq C\mu\| K-\frac{1}{4}\|_{C^{2,\alpha}(\mathbb{S}^2)}\\
	\|w_{\xi,\mu}-w_{\xi',\mu}\|_{C^{4,\alpha'}(\mathbb{S}^2)}&\leq C\mu|\xi-\xi'|\| K-\frac{1}{4}\|_{C^{2,\alpha}(\mathbb{S}^2)}
	\end{align*}
\end{prop}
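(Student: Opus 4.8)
The plan is to carry out a quantitative Lyapunov--Schmidt reduction via the implicit function theorem with parameters. By the diffeomorphism lemma above, near the base point $0\in\mathscr{S}_0$ the manifold $\mathscr{S}_0$ is a graph over $D(\mathscr{L})$: there is a $C^2$ map $\eta\mapsto w(\eta)$ from a neighborhood $\mathcal{N}_0$ of $0$ in $D(\mathscr{L})$ into $\mathscr{S}_0$ with $w(0)=0$ and $Dw(0)$ the inclusion $D(\mathscr{L})\hookrightarrow C^{4,\alpha'}(\mathbb{S}^2)$. Define
\begin{align*}
\Psi:\mathcal{N}_0\times\bar{B}_{s_0}\times[0,1]\longrightarrow R(\mathscr{L}),\qquad \Psi(\eta,\xi,\mu):=\Pi\big(F_\mu[\pi(w(\eta),\xi)]\big).
\end{align*}
Since $v\mapsto\sigma_2(\lambda(A_{g_v}))$ is smooth on $\mathcal{O}$ (there $\lambda(A_{g_v})$ stays in the open cone $\Gamma_2$ on which $\sigma_2$ is smooth), and since $\pi$ is a $C^2$ diffeomorphism, $w$ is $C^2$ and $\Pi$ is bounded linear, $\Psi$ is $C^2$ jointly. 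Moreover $K_\mu$ enters $\Psi$ only through the single affine term $-\Pi(K_\mu)$, which depends on neither $\eta$ nor $\xi$; hence every partial derivative of $\Psi$ in $\eta$ or $\xi$ of order $\ge 1$ is independent of $K$.

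The two structural inputs are the following. (a) For each $\xi\in B$ the function $\pi(0,\xi)$ is a standard bubble, i.e.\ an exact solution of $F_0[\,\cdot\,]=0$, so $\Psi(0,\xi,0)=\Pi(0)=0$. (b) By the chain rule and the definition of $\mathscr{L}$ one has $D_\eta\Psi(0,\xi,0)=\Pi\circ\mathscr{L}$ as an operator from $D(\mathscr{L})$ to $R(\mathscr{L})$; since $\mathscr{L}:D(\mathscr{L})\to R(\mathscr{L})$ is an isomorphism while $\Pi$ is the identity on $R(\mathscr{L})$, this operator is an isomorphism, and its inverse is bounded uniformly for $\xi\in\bar{B}_{s_0}$ because the bubbles $\pi(0,\xi)$, $\xi\in\bar{B}_{s_0}$, are pairwise conjugate under the M\"obius maps $\varphi_{P,t}$ with $t=(1-|\xi|)^{-1}\le(1-s_0)^{-1}$, so $\mathscr{L}=\mathscr{L}_\xi$ depends continuously on $\xi$ over the compact set $\bar{B}_{s_0}$. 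Applying the implicit function theorem at each $(\xi_0,0)$, $\xi_0\in\bar{B}_{s_0}$, and observing that the quantitative constants it produces (the norm of $D_\eta\Psi(0,\xi_0,0)^{-1}$, the moduli of continuity of $D\Psi$ and $D_\eta^2\Psi$ near the base point) are uniformly controlled on $\bar{B}_{s_0}$ by (b) and compactness, we extract a single $\mu_0\in(0,1]$ and a single neighborhood $\mathcal{N}$ of $0$ in $\mathscr{S}_0$ so that for every $\mu\in(0,\mu_0]$ and $\xi\in\bar{B}_{s_0}$ there is a unique $w_{\xi,\mu}\in\mathcal{N}$ with $\Pi(F_\mu[\pi(w_{\xi,\mu},\xi)])=0$, depending smoothly on $(\xi,\mu)$; write $w_{\xi,\mu}=w(\eta(\xi,\mu))$, so $\eta(\xi,0)=0$ by uniqueness.

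For the bounds, note $F_\mu[v]-F_0[v]=-(K_\mu-K_0)=-\mu(K-\tfrac14)$, so by (a), $\Psi(0,\xi,\mu)=-\mu\,\Pi(K-\tfrac14)$, which is independent of $\xi$ and satisfies $\|\Psi(0,\xi,\mu)\|_{C^{2,\alpha'}(\mathbb{S}^2)}\le C\mu\|K-\tfrac14\|_{C^{2,\alpha}(\mathbb{S}^2)}$. Writing $0=\Psi(\eta(\xi,\mu),\xi,\mu)=\Psi(0,\xi,\mu)+\big(\int_0^1 D_\eta\Psi(t\eta(\xi,\mu),\xi,\mu)\,dt\big)\eta(\xi,\mu)$ and using that the averaged operator is invertible with uniformly bounded inverse (shrinking $\mathcal{N}_0$ and $\mu_0$ if needed, by (b)), we obtain $\|\eta(\xi,\mu)\|_{D(\mathscr{L})}\le C\mu\|K-\tfrac14\|_{C^{2,\alpha}(\mathbb{S}^2)}$, hence $\|w_{\xi,\mu}\|_{C^{4,\alpha'}(\mathbb{S}^2)}\le C\mu\|K-\tfrac14\|_{C^{2,\alpha}(\mathbb{S}^2)}$ since $w$ is $C^1$ with $w(0)=0$. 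For the $\xi$-dependence, differentiate $\Psi(\eta(\xi,\mu),\xi,\mu)=0$ in $\xi$ to get $\partial_\xi\eta=-\big(D_\eta\Psi\big)^{-1}D_\xi\Psi$; as $\Psi(0,\,\cdot\,,\mu)$ is constant in $\xi$ we have $D_\xi\Psi(0,\xi,\mu)=0$, so the $K$-free bound on the mixed second derivative yields $\|D_\xi\Psi(\eta(\xi,\mu),\xi,\mu)\|\le C\|\eta(\xi,\mu)\|\le C\mu\|K-\tfrac14\|_{C^{2,\alpha}(\mathbb{S}^2)}$, whence $\|\partial_\xi\eta(\xi,\mu)\|\le C\mu\|K-\tfrac14\|_{C^{2,\alpha}(\mathbb{S}^2)}$; integrating along the segment from $\xi'$ to $\xi$ and using that $w$ is Lipschitz gives the second estimate.

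The step I expect to be the main obstacle is the uniformity in $\xi$ of the implicit function theorem --- ensuring that $\mu_0$ and $\mathcal{N}$ can be chosen independently of $\xi\in\bar{B}_{s_0}$, which demands uniform control of $\|D_\eta\Psi(0,\xi,0)^{-1}\|$ and of the second-order remainders. This rests on the family of standard bubbles $\{\pi(0,\xi):\xi\in\bar{B}_{s_0}\}$ being compact and mutually M\"obius-conjugate with bounded parameter $t\le(1-s_0)^{-1}$, so that the conformally covariant linearizations $\mathscr{L}_\xi$ vary continuously, together with the input (already recorded in the set-up) that $\mathscr{L}:D(\mathscr{L})\to R(\mathscr{L})$ is an isomorphism: the coordinate functions $x_1,x_2,x_3$ span the zero modes of the full linearization coming from the conformal group of $\mathbb{S}^2$, which is precisely why the reduction is organized on $\mathscr{S}_0$ and projected by $\Pi$.
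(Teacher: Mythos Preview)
The paper does not supply its own proof of this proposition: at the start of the degree-theory subsection it states that the argument ``is by now standard'' and refers the reader to \cite{LNW} for the details. Your quantitative Lyapunov--Schmidt reduction via the implicit function theorem with parameters is precisely that standard argument, and you have correctly isolated the one genuinely delicate point --- uniformity of the implicit-function-theorem constants over $\xi\in\bar B_{s_0}$ --- and handled it by compactness together with continuous dependence of the linearizations on $\xi$. (A minor remark: the base point the paper writes as ``$1\in\mathscr{S}_0$'' is an artifact of the higher-dimensional convention $g_u=u^{4/(n-2)}g$; in the present $e^u g$ setting it is the constant function $0$, as you use, and the normalization of $K_0$ should be adjusted so that $F_0[\pi(0,\xi)]=0$.)
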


Define
\begin{align*}
&\Lambda_{\xi,\mu}:=-\frac{3}{4\pi}\int_{\mathbb{S}^2}F_\mu[\pi(w_{\xi,\mu},\xi)](x)xdv_{g}(x),\\
&G:=\int_{\mathbb{S}^2}K\circ\varphi_{P,t} xdv_{g}(x),\quad\text{where }P=\frac{\xi}{|\xi|}\text{ and }t=(1-|\xi|)^{-1}.
\end{align*}

Instead of solving zeros of $F_\mu$ in $\pi(\mathcal{N}\times B)$, it suffices to solve zeros for the finite dimensional map $\xi\rightarrow\Lambda_{\xi,\mu}$. Next lemma gives the formula of $\deg(\Lambda_{\xi,\mu},B_s,0)$.
\begin{lemm}
	Let $\alpha\in(0,1)$. Suppose that $K\in C^{2,\alpha}(\mathbb{S}^2)$ satisfies the nondegeneracy condition (\ref{Knondege}). Then there exists $\mu_0\in (0,1]$ and $s_0\in (0,1]$ such that for all $\mu\in(0,\mu_0]$ and $s\in [s_0,1)$, the Brouwer degrees $\deg(\Lambda_{\xi,\mu},B_s,0)$ and $\deg(G,B_s,0)$ are well-defined and
	\begin{align*}
	\deg(\Lambda_{\xi,\mu},B_s,0)=\deg(G,B_s,0)=-1+\deg(\nabla K, \text{Crit}_{-}(K)).
	\end{align*}
\end{lemm}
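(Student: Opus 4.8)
The plan has three parts: (i) reduce $\deg(\Lambda_{\cdot,\mu},B_s,0)$ to $\deg(G,B_s,0)$ via the Lyapunov--Schmidt reduction of the preceding Proposition, (ii) describe the behaviour of $G$ near $\partial B$ using the non-degeneracy of $K$, and (iii) evaluate $\deg(G,B_s,0)$ by a classical topological-degree computation. For part (i), write $\varphi=\varphi_{P,t}$ with $P=\xi/|\xi|$, $t=(1-|\xi|)^{-1}$, and use the conformal equivariance $\sigma_2\bigl(g_{\pi(w,\xi)}^{-1}A_{g_{\pi(w,\xi)}}\bigr)(x)=\sigma_2\bigl(g_w^{-1}A_{g_w}\bigr)(\varphi^{-1}(x))$, together with the fact that the standard bubble $\bar w$ (the constant for which $\pi(\bar w,\xi)$ solves $F_0=0$) makes this quantity identically $1/4$. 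By the preceding Proposition, $w_{\xi,\mu}=\bar w+\mu\dot w_\xi+o(\mu)$ in $C^{4,\alpha'}(\mathbb S^2)$, uniformly for $\xi\in\bar B_{s_0}$, with $\dot w_\xi\in D(\mathscr L)$. The first variation of $w\mapsto\sigma_2(g_w^{-1}A_{g_w})$ at $\bar w$ is a positive multiple of $-(\Delta_g+2)$, whose kernel is spanned by the coordinate functions $x_1,x_2,x_3$; inserting this into the definition of $\Lambda_{\xi,\mu}$ and into $\Pi F_\mu[\pi(w_{\xi,\mu},\xi)]=0$ gives
\[
\Lambda_{\xi,\mu}=\mu\,M(\xi)^{-1}G(\xi)+o(\mu)\qquad\text{uniformly on }\bar B_{s_0},
\]
where $M(\xi)_{jm}=\int_{\mathbb S^2}\bigl(\varphi_{P,t}(y)\bigr)_j\,y_m\,dv_g(y)$; a direct computation in stereographic coordinates shows that $M(\xi)$ is diagonal with strictly positive diagonal entries (it equals $\tfrac{4\pi}{3}I$ when $t=1$), hence symmetric positive definite, so $\xi\mapsto M(\xi)^{-1}$ is homotopic through invertible matrices to the identity since $GL^+_3(\mathbb R)$ is connected. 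Granting from part (ii) that $G$ is bounded away from $0$ on $\partial B_s$ for $s\in[s_0,1)$, this forces, for $\mu$ small depending on $s_0$, that $\Lambda_{\cdot,\mu}$ is non-vanishing on $\partial B_s$ and homotopic there to $G$; hence $\deg(\Lambda_{\cdot,\mu},B_s,0)$ is well defined and equals $\deg(G,B_s,0)$, both independent of $s\in[s_0,1)$ and of $\mu\in(0,\mu_0]$ by excision and homotopy invariance.

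For part (ii), let $|\xi|\to 1^-$, i.e.\ $t\to\infty$ with $P=\xi/|\xi|$ fixed. After the substitution $z=\varphi_{P,t}(x)$ the push-forward measure $(\varphi_{P,t})_*dv_g$ concentrates at $P$, and Taylor expanding $K$ at $P$ --- splitting the integral at the scale $|x|\sim t^{-1}$ --- gives
\[
G(\xi)=\frac{c_1}{t}\,\nabla_{\mathbb S^2}K(P)\ +\ \frac{c_2}{t^{2}}\,\Delta_{\mathbb S^2}K(P)\,P\ +\ (\text{lower order}),
\]
with $c_1,c_2$ nonzero universal constants, the first term tangential and the second radial (all intermediate terms vanish by oddness in the integration variable). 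Thus $G$ extends continuously to $\bar B$ with $G|_{\partial B}=0$; moreover by the non-degeneracy condition $(\ref{Knondege})$ --- at every $\omega\in\mathbb S^2$ either $\nabla_{\mathbb S^2}K(\omega)\neq 0$, so the tangential term is nonzero, or $\nabla_{\mathbb S^2}K(\omega)=0$ and $\Delta_{\mathbb S^2}K(\omega)\neq 0$, so the radial term is nonzero --- there is $s_0<1$ with $G\neq 0$ on $\bar B\setminus B_{s_0}$.

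For part (iii), since $G\neq 0$ on $\bar B\setminus B_{s_0}$, the identity $\deg(G,B_s,0)=\deg\bigl(\omega\mapsto G(s_0\omega)/|G(s_0\omega)|\bigr)$ reduces the problem to computing the topological degree of a map $\Phi:\mathbb S^2\to\mathbb S^2$ which, by the expansion above (for $s_0$ close enough to $1$), is homotopic to $\omega\mapsto N\bigl(c_1\nabla_{\mathbb S^2}K(\omega)+c_2'\,\Delta_{\mathbb S^2}K(\omega)\,\omega\bigr)$ for the relevant nonzero constants, where $N$ denotes normalization; this map agrees with $\pm\nabla_{\mathbb S^2}K/|\nabla_{\mathbb S^2}K|$ away from small neighbourhoods of $\text{Crit}(K)$ and, near each $\bar x\in\text{Crit}(K)$, transitions to $\pm\bar x$ according to the sign of $\Delta_{\mathbb S^2}K(\bar x)$. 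Computing $\deg\Phi$ --- evaluating the local degree $\operatorname{sgn}\det\nabla^2K(\bar x)=(-1)^{i(\bar x)}$ at the critical points where $\Delta_{\mathbb S^2}K<0$ and combining with the Poincar\'e--Hopf count $\sum_{\bar x}(-1)^{i(\bar x)}=\chi(\mathbb S^2)=2$ --- gives, exactly as in the classical Nirenberg problem and as in \cite{CHY,LNW1,LNW2}, $\deg\Phi=-1+\deg(\nabla K,\text{Crit}_-(K))$, the $-1$ being the contribution of the bubble family itself. Combining the three parts yields the asserted identities.

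The main obstacle is the careful bookkeeping in parts (i) and (ii): one must carry out the two-scale asymptotic expansion of $G$ (equivalently of $\Lambda_{\xi,\mu}$) near $\partial B$ precisely enough to read off that the leading tangential and radial contributions are governed by $\nabla_{\mathbb S^2}K$ and $\Delta_{\mathbb S^2}K$ with the correct signs, and to verify the positive definiteness of $M(\xi)$; once these are in hand the remainder is a routine application of degree theory. These computations are carried out in detail in \cite{LNW}.
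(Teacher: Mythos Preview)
The paper gives no proof of this lemma, stating only that ``the proof is by now standard, so we skip some computations here'' and referring the reader to \cite{LNW} (and implicitly to \cite{L95,L96,CHY,LNW1}). Your sketch follows precisely that standard route---Lyapunov--Schmidt reduction relating $\Lambda_{\xi,\mu}$ to $G$ through a positive-definite matrix, the two-scale expansion of $G$ near $\partial B$ picking out $\nabla_{\mathbb S^2}K$ tangentially and $\Delta_{\mathbb S^2}K$ radially, and the Poincar\'e--Hopf count yielding $-1+\deg(\nabla K,\text{Crit}_-(K))$---so there is nothing to compare: your outline \emph{is} the argument the paper is invoking.
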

\medskip
The last piece is to prove that $\deg(F_1,\mathcal{O},0)$ is the degree of $\Lambda_{\xi,\mu}$.
\begin{prop}\label{degreefor}
	Let $\alpha\in(0,1)$. Suppose that $K\in C^{2,\alpha}(\mathbb{S}^2)$ satisfies the nondegeneracy condition (\ref{Knondege}). Let $\mathcal{O}$ and $F_1$ be as defined in (\ref{setO}) and (\ref{Fmu}) with $\alpha'=\alpha$. Then
	\begin{align*}
	\deg(F_1,\mathcal{O},0)=-1+\deg(\nabla K, \text{Crit}_{-}(K)).
	\end{align*}
\end{prop}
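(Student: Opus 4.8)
I would run a finite–dimensional Lyapunov--Schmidt reduction that identifies $\deg(F_1,\mathcal{O},0)$ with the Brouwer degree of the reduced map $\xi\mapsto\Lambda_{\xi,\mu}$, which the preceding lemma already evaluates as $-1+\deg(\nabla K,\text{Crit}_-(K))$. First I would invoke homotopy invariance of the degree of \cite{L89} along $\mu\mapsto F_\mu$ — legitimate because, by Theorem \ref{grad2}, Theorem \ref{C2} and Proposition \ref{C0esti}, the constant $C_1$ in (\ref{setO}) can be fixed so large that no solution of (\ref{sigma2eqn2}) lies on $\partial\mathcal{O}$ for any $\mu\in(0,1]$ — together with the independence on $\alpha'\in(0,\alpha]$ from \cite[Theorem B.1]{L95}, to reduce to computing $\deg(F_\mu,\mathcal{O},0)$ for a single small $\mu$ and some $\alpha'\in(0,\alpha)$. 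For such $\mu$, the localization lemma for $\pi$ puts every solution of (\ref{sigma2eqn2}) into $\pi(\mathcal{N}\times B_{s_0})$; after enlarging $C_1$ and shrinking $\mathcal{N}$ one also has $\pi(\mathcal{N}\times B_{s_0})\subset\mathcal{O}$ (the $C^{4,\alpha'}$-norm of $\pi(w,\xi)$ is bounded uniformly for $w\in\mathcal{N}$, $\xi\in\bar{B}_{s_0}$, and $\lambda(A_{g_{\pi(w,\xi)}})\in\Gamma_2$ since this holds for the standard bubbles $\pi(0,\xi)$), so excision gives $\deg(F_\mu,\mathcal{O},0)=\deg(F_\mu,\pi(\mathcal{N}\times B_{s_0}),0)$.

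Next I would carry out the reduction. Transporting through the $C^2$ diffeomorphism $\pi$ (the degree of \cite{L89} being compatible with Leray--Schauder degree under the standard reduction to a compact perturbation of the identity, hence invariant under $C^1$ changes of variable), the problem becomes computing the degree of $(w,\xi)\mapsto F_\mu[\pi(w,\xi)]$ on $\mathcal{N}\times B_{s_0}$. Split $C^{2,\alpha'}(\mathbb{S}^2)=R(\mathscr{L})\oplus E$ with $E=\mathrm{span}\{x_1,x_2,x_3\}$ and $\Pi$ the projection onto $R(\mathscr{L})$; then $(I-\Pi)F_\mu[\pi(w,\xi)]=\frac{3}{4\pi}\,x\cdot\int_{\mathbb{S}^2}y\,F_\mu[\pi(w,\xi)](y)\,dv_g(y)$, which under $E\cong\mathbb{R}^3$ equals $-\Lambda_{\xi,\mu}$ when $w=w_{\xi,\mu}$. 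By the proposition on $w_{\xi,\mu}$, for every $\xi\in\bar{B}_{s_0}$ the equation $\Pi F_\mu[\pi(w,\xi)]=0$ has a unique solution $w=w_{\xi,\mu}\in\mathcal{N}$, smooth in $\xi$, whose $w$-linearization is a small perturbation of the isomorphism $\mathscr{L}:D(\mathscr{L})\to R(\mathscr{L})$, hence an isomorphism. The standard finite-dimensional reduction for the degree then yields
\begin{align*}
\deg\big((w,\xi)\mapsto F_\mu[\pi(w,\xi)],\ \mathcal{N}\times B_{s_0},\ 0\big)=\varepsilon_0\cdot\deg\big(\xi\mapsto(I-\Pi)F_\mu[\pi(w_{\xi,\mu},\xi)],\ B_{s_0},\ 0\big),
\end{align*}
with $\varepsilon_0=\pm1$ the sign that the degree of \cite{L89} attaches to the invertible linear elliptic operator $\partial_w\big(\Pi F_\mu[\pi(\cdot,\xi)]\big)(w_{\xi,\mu})$, independent of $\xi$. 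Identifying the reduced map with $-\Lambda_{\cdot,\mu}$ and using $\deg(-\Lambda_{\cdot,\mu},B_{s_0},0)=(-1)^3\deg(\Lambda_{\cdot,\mu},B_{s_0},0)$, this becomes $\deg(F_\mu,\mathcal{O},0)=-\varepsilon_0\,\deg(\Lambda_{\cdot,\mu},B_{s_0},0)$.

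Finally I would pin down $\varepsilon_0$. Homotoping to $\mu=0$ and linearizing $v\mapsto\sigma_2(\lambda(A_{g_v}))$ at the round metric, the relevant operator is, up to a positive factor, $-\Delta-2$ on $\mathbb{S}^2$, whose kernel is exactly the first spherical harmonics $\{x_1,x_2,x_3\}$; restricted to $D(\mathscr{L})\to R(\mathscr{L})$ (which contains the constants but none of the $x_i$) it has precisely one negative eigenvalue, so the sign assigned to it is $-1$, i.e. $\varepsilon_0=-1$. Hence $\deg(F_\mu,\mathcal{O},0)=\deg(\Lambda_{\cdot,\mu},B_{s_0},0)$, which by the preceding lemma equals $\deg(G,B_{s_0},0)=-1+\deg(\nabla K,\text{Crit}_-(K))$; combined with the first step this proves the proposition. \emph{The main obstacle} is exactly this sign bookkeeping — tracking how the degree of \cite{L89} for fully nonlinear second-order elliptic operators evaluates on the linearization at the standard bubble, and reconciling it with the orientation conventions of the finite-dimensional reduction — which is the computation carried out in detail in \cite{LNW} and which I would follow; verifying invariance of the degree under the change of coordinates $\pi$ is a further, more routine, point.
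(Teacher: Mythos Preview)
Your proposal is correct and follows exactly the approach the paper intends: the paper itself gives no detailed proof of this proposition, declaring at the outset of the section that the argument ``is by now standard'' and referring to \cite{LNW} for the computations, then simply stating the auxiliary lemmas and Proposition \ref{degreefor} without proof. Your outline---homotopy to small $\mu$, localization and excision via the diffeomorphism $\pi$, Lyapunov--Schmidt reduction onto the three-dimensional kernel of the linearization, and the sign determination via the single negative eigenvalue of the linearized operator on $D(\mathscr{L})$---is precisely the reduction carried out in \cite{LNW} (and earlier, for the scalar curvature case, in \cite{L95}) that the paper defers to.
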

\medskip
Now we are in the position to complete the proof of Theorem \ref{Niren}.
\begin{proof}
	Estimate (\ref{C2norm}) is given by Theorem \ref{grad2}, Theorem \ref{C2} and Proposition \ref{C0esti}. Under the assumption that $\deg(\nabla K, \text{Crit}_{-}(K))\neq 1$, we now prove the existence of solution to (\ref{eqn}).
	
	Let $K_j$ be a sequence of functions in $C^{2,\alpha}(\mathbb{S}^2)$ converging to $K$ in $C^2$. For $j$ sufficiently large, $K_j$ satisfies (\ref{Knondege}), and $\deg(\nabla K_j, \text{Crit}_{-}(K_j))\neq 1$. By Proposition \ref{degreefor}, there exists $v_j\in C^{4,\alpha}(\mathbb{S}^2)$ solving (\ref{sigma2eqn2}). By Theorem \ref{C0esti} we have
	\begin{align*}
	\|v_j\|_{C^0(\mathbb{S}^2)}\leq C,
	\end{align*}
	By Theorem \ref{grad2}, Theorem \ref{C2} and Evans-Krylov's theorem,
	\begin{align*}
	\|v_j\|_{C^{2,\alpha}(\mathbb{S}^2)}\leq C.
	\end{align*}
	The proof is finished by sending $j\rightarrow\infty$ .
\end{proof}

\appendix
\section{Calculus Lemmas}\label{appendix}

\begin{lemm}\label{Calculus-1}
	Let $a>0$ be a positive number, assume that $g\in [-4a,4a]$ satisfies, for $|\tau|<2a$, $|s|\leq 4a$, $0<\lambda<a$ and $\lambda<|s-\tau|$,
	\begin{align*}
	g(\tau+\frac{\lambda^2(s-\tau)}{|s-\tau|^2})-4\ln \frac{|s-\tau|}{\lambda}\leq g(s).
	\end{align*}
	Then
	\begin{align*}
	|g^\prime(s)|\leq \frac{2}{a},\quad |s|\leq a.
	\end{align*}
\end{lemm}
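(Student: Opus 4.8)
The plan is to use that, in one real variable, the inversion in the hypothesis is simply $\tau+\frac{\lambda^2(s-\tau)}{|s-\tau|^2}=\tau+\frac{\lambda^2}{s-\tau}$, and that the inequality becomes an equality exactly when $\lambda=|s-\tau|$; this converts the one--parameter family of inequalities into a one--sided derivative condition at $\lambda=|s-\tau|$, which is precisely the Lipschitz bound.

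Concretely, I would fix $s$ with $|s|\le a$ and, for small $\epsilon>0$, put $\tau:=s-(a-\epsilon)$ and $L:=s-\tau=a-\epsilon$. One first checks this choice is admissible for the hypothesis: $|\tau|\le 2a-\epsilon<2a$, $|s|\le a\le 4a$, for $0<\lambda<L$ the reflected point $\tau+\lambda^2/L$ lies in $(\tau,s)\subset[-4a,4a]$, and the constraints $0<\lambda<a$ and $\lambda<|s-\tau|$ both reduce to $0<\lambda<L$. Define $h(\lambda):=g\!\left(\tau+\frac{\lambda^2}{L}\right)-4\ln\frac{L}{\lambda}$ for $0<\lambda\le L$. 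Since $\tau+L=s$, we have $h(L)=g(s)$, whereas the hypothesis gives $h(\lambda)\le g(s)$ for $0<\lambda<L$; hence $h$ attains its maximum on $(0,L]$ at the right endpoint, so the left-hand derivative satisfies $h'(L)\ge 0$. A direct differentiation gives $h'(\lambda)=\frac{2\lambda}{L}\,g'\!\left(\tau+\frac{\lambda^2}{L}\right)+\frac{4}{\lambda}$, so $0\le h'(L)=2g'(s)+\frac{4}{L}$, that is $g'(s)\ge-\frac{2}{a-\epsilon}$; letting $\epsilon\to 0^+$ gives $g'(s)\ge-\frac{2}{a}$.

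Running the symmetric argument on the other side yields the matching upper bound: take $\tau:=s+(a-\epsilon)$ and $L:=\tau-s=a-\epsilon$ (again admissible, the reflected points now lying in $(s,\tau)\subset[-4a,4a]$), and set $h(\lambda):=g\!\left(\tau-\frac{\lambda^2}{L}\right)-4\ln\frac{L}{\lambda}$ on $(0,L]$. As before $h$ is maximized at $\lambda=L$, so $h'(L)\ge 0$, and now $h'(L)=-2g'(s)+\frac{4}{L}$, giving $g'(s)\le\frac{2}{a-\epsilon}$ and hence $g'(s)\le\frac{2}{a}$. Combining the two one--sided estimates gives $|g'(s)|\le 2/a$ for all $|s|\le a$.

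The computations are routine; the only point to watch is the bookkeeping of the admissibility constraints $|\tau|<2a$, $0<\lambda<a$, $\lambda<|s-\tau|$, together with keeping the reflected point inside the interval $[-4a,4a]$. This is also why one works with $L=a-\epsilon$ and passes to the limit rather than taking $L=a$ outright: at the endpoints $|s|=a$ the corresponding choice of $\tau$ would have $|\tau|=2a$, which is excluded by $|\tau|<2a$. If $g$ is only assumed continuous, the same computation carried out with difference quotients $\frac{h(L)-h(\lambda)}{L-\lambda}$ in place of $h'(L)$ shows $g$ is Lipschitz on $[-a,a]$ with constant $2/a$, so the stated inequality holds wherever $g'$ exists.
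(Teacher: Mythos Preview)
Your proof is correct. The paper takes a slightly different packaging: it sets $h=e^{g}$, rewrites the hypothesis as $(\lambda/|s-\tau|)^{4}\,h(\tau+\lambda^{2}(s-\tau)/|s-\tau|^{2})\le h(s)$, and then invokes \cite[Lemma~A.1]{LL2} with $\alpha=4$ to get $|h'(s)|\le \tfrac{2}{a}h(s)$, which is $|g'(s)|\le 2/a$. Your argument is self-contained and more elementary: you fix $\tau$ at distance $L=a-\epsilon$ from $s$, observe that the one-parameter family $\lambda\mapsto h(\lambda)$ is maximized at the endpoint $\lambda=L$ (where the hypothesis degenerates to an equality), and read off the sign of $h'(L)$. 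The underlying mechanism is the same---extracting a first-order condition from the moving-sphere inequality at the instant of contact---but your version avoids both the exponential substitution and the external reference, at the cost of writing out the endpoint calculus explicitly.
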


\begin{proof}
	Let $\displaystyle h=e^g$, then we have, for $|\tau|<2a$, $|s|\leq 4a$, $0<\lambda<a$ and $\lambda<|s-\tau|$,
	\begin{align*}
	(\frac{\lambda}{|s-\tau|})^4h(\tau+\frac{\lambda^2(s-\tau)}{|s-\tau|^2})\leq h(s).
	\end{align*}
	
	Apply \cite[Lemma A.1]{LL2} with $\alpha=4$, we have $\displaystyle|h'(s)|\leq \frac{2}{a}h(s)$, for $|s|\leq a$. The result follows immediately.
\end{proof}

\begin{lemm}\label{Calculus-2}
	Let $a>0$ be a constant and $B_{8a}\subset \mathbb{R}^2$ be the ball of radius $8a$ centered at the origin. Assume that $u\in C^1(B_{8a})$ satisfying
	\begin{align}\label{assumption4}
	u_{x,\lambda}(y)\leq u(y),\quad x\in B_{4a},\quad y\in B_{8a},\quad 0<\lambda<2a,\quad \lambda<|y-x|,
	\end{align}
	where $u_{x,\lambda}$ is as defined in (\ref{uxlambda}).

	Then 
	\begin{align*}
	|\nabla u(x)|\leq \frac{2}{a},\quad |x|<a.
	\end{align*}
\end{lemm}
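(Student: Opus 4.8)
The plan is to reduce to the one-variable result Lemma~\ref{Calculus-1} by restricting $u$ to straight lines. The crucial elementary fact is that the M\"obius inversion entering $u_{x,\lambda}$ preserves every line through its center: if $x$ and $y$ both lie on a line $x_0+\mathbb{R}e$, then so does $x+\lambda^2(y-x)/|y-x|^2$, and the induced map on that line is precisely the one-dimensional inversion $\tau\mapsto \tau+\lambda^2(s-\tau)/|s-\tau|^2$ appearing in Lemma~\ref{Calculus-1}.

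Concretely, I would fix $x_0$ with $|x_0|<a$ and a unit vector $e\in\mathbb{R}^2$, and set $g(s):=u(x_0+se)$; since $u\in C^1(B_{8a})$ and $|x_0+se|\le 5a<8a$ for $|s|\le 4a$, this $g$ is $C^1$ on $[-4a,4a]$. Next, given parameters $|\tau|<2a$, $|s|\le 4a$, $0<\lambda<a$ with $\lambda<|s-\tau|$, I would put $x:=x_0+\tau e$ and $y:=x_0+se$. Then $|x|<3a<4a$ so $x\in B_{4a}$, $|y|<5a<8a$ so $y\in B_{8a}$, and $0<\lambda<2a$ with $|y-x|=|s-\tau|>\lambda$, so hypothesis~(\ref{assumption4}) applies. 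Since $y-x=(s-\tau)e$, one computes $x+\lambda^2(y-x)/|y-x|^2=x_0+\bigl(\tau+\lambda^2(s-\tau)/|s-\tau|^2\bigr)e$, and the one-dimensional argument $\tau+\lambda^2(s-\tau)/|s-\tau|^2$ has modulus at most $2a+\lambda<4a$, so $g$ is indeed defined there. Unwinding the definitions, (\ref{assumption4}) becomes
\[
g\!\left(\tau+\frac{\lambda^2(s-\tau)}{|s-\tau|^2}\right)-4\ln\frac{|s-\tau|}{\lambda}\le g(s),
\]
which is exactly the hypothesis of Lemma~\ref{Calculus-1}.

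Lemma~\ref{Calculus-1} then yields $|g'(s)|\le 2/a$ for $|s|\le a$; evaluating at $s=0$ gives $|\nabla u(x_0)\cdot e|=|g'(0)|\le 2/a$. As $e$ ranges over all unit vectors, choosing $e=\nabla u(x_0)/|\nabla u(x_0)|$ when $\nabla u(x_0)\ne 0$ gives $|\nabla u(x_0)|\le 2/a$, which is the assertion. I do not expect any genuine difficulty here; the only point requiring attention is the bookkeeping of radii, ensuring that all points produced by the inversion remain in $B_{8a}$ (and in $[-4a,4a]$ for $g$), which is precisely why the assumptions are posed on $B_{8a}$ with $x\in B_{4a}$ and $\lambda<2a$ while the conclusion is only asserted on $B_a$.
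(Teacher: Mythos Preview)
Your proposal is correct and follows essentially the same approach as the paper: restrict $u$ to a line through the given point, verify that the one-variable function so obtained satisfies the hypothesis of Lemma~\ref{Calculus-1}, and read off the directional derivative bound at the origin of that line. The paper's proof is terser and omits the radius bookkeeping that you spell out, but the argument is identical.
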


\begin{proof}
	For $x\in B_{a}$ and $e\in \mathbb{R}^2$, with $|e|=1$, let $h(s)=u(x+se)$. Then, by (\ref{assumption4}), $h$ satisfies the hypothesis of Lemma \ref{Calculus-1}. Thus we have $\displaystyle|h^\prime(0)|\leq \frac{2}{a}$,  i.e. $\displaystyle|\nabla u(x)\cdot e|\leq \frac{2}{a}$. The lemma is now proved.
	
\end{proof}

\begin{lemm}\label{Calculus-3}
	Let $u\in C^1(\mathbb{R}^2)$ satisfy 
	\begin{align*}
	u_{x,\lambda}(y)\leq u(y),  \quad \forall  \lambda>0,  x\in\mathbb{R}^2,  |y-x|\geq\lambda.
	\end{align*}
	Then $u$ must be constant.
\end{lemm}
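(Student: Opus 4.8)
The plan is to reduce the statement directly to the one-variable gradient bound already established in Lemma \ref{Calculus-2}, applied on balls of arbitrarily large radius.

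First I would fix an arbitrary point $x_0\in\mathbb{R}^2$ and choose any $a>|x_0|$, so that $x_0\in B_a$. The hypothesis of the present lemma asserts that $u_{x,\lambda}(y)\le u(y)$ for \emph{every} $\lambda>0$, every $x\in\mathbb{R}^2$ and every $y$ with $|y-x|\ge\lambda$; in particular this holds for all $x\in B_{4a}$, $y\in B_{8a}$, $0<\lambda<2a$ with $\lambda<|y-x|$, which is exactly assumption (\ref{assumption4}) of Lemma \ref{Calculus-2} (the non-strict inequality $|y-x|\ge\lambda$ trivially implies the strict one on the relevant set). Since $u\in C^1(\mathbb{R}^2)$ restricts to a $C^1$ function on $B_{8a}$, Lemma \ref{Calculus-2} applies and yields $|\nabla u(x_0)|\le 2/a$. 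Then I would send $a\to\infty$: the bound $|\nabla u(x_0)|\le 2/a$ holds for all $a>|x_0|$, so $\nabla u(x_0)=0$. As $x_0$ is arbitrary, $\nabla u\equiv 0$ on $\mathbb{R}^2$, and since $\mathbb{R}^2$ is connected this forces $u$ to be constant.

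There is essentially no obstacle here; the only point to verify — and it is immediate — is that the restricted parameter ranges appearing in Lemma \ref{Calculus-2} ($x\in B_{4a}$, $y\in B_{8a}$, $0<\lambda<2a$, $\lambda<|y-x|$) are contained in the full ranges assumed in the present statement, so that Lemma \ref{Calculus-2} is applicable for every $a>0$. As an alternative one could argue from scratch by the moving-spheres method, exactly as in the proof of Theorem \ref{dege1}: applying $u\ge u_{x,\lambda}$ with $x=Re$, $\lambda=R-a$ and letting $R\to\infty$ shows that $u(y)\ge u\bigl(y-2(y\cdot e-a)e\bigr)$ for every unit vector $e$, every $a>0$ and every $y$ with $y\cdot e<a$; comparing any two points via the reflection across their perpendicular bisector, and then reversing the roles of the two points, gives $u(p)=u(q)$ for all $p,q$, hence $u$ is constant. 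The reduction to Lemma \ref{Calculus-2} is shorter, and that is the route I would take.
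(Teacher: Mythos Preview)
Your proof is correct and is essentially the same as the paper's own argument: the paper simply says ``Let $a\to\infty$ in Lemma \ref{Calculus-2}, we have $|\nabla u|=0$. Thus $u$ must be constant.'' You have merely spelled out the straightforward details of this passage to the limit.
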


\begin{proof}
	Let $a\rightarrow \infty$ in Lemma \ref{Calculus-2}, we have $|\nabla u|=0$. Thus $u$ must be constant.
\end{proof}

\end{document}